\newtheorem{remark}{Remark}
\newcommand{\mol}{\omega}
\newtheorem{assumption}{Assumption}
\title{Minimal time nonlinear control via semi-infinite programming}
\author{Antoine Oustry\textsuperscript{1,2}, Matteo Tacchi\textsuperscript{3,4}}
\begin{document}
\stepcounter{footnote}
\footnotetext{Ecole des Ponts, Marne-la-Vallée, France.}
\stepcounter{footnote}
\footnotetext{Laboratoire d'informatique de l'\'Ecole polytechnique, Institut Polytechnique de Paris, Palaiseau, France.}
\stepcounter{footnote}
\footnotetext{Univ. Grenoble Alpes, CNRS, Grenoble INP (Institute of Engineering Univ. Grenoble Alpes), GIPSA-lab, 38000 Grenoble, France.}
\stepcounter{footnote}
\footnotetext{Corresponding author. \texttt{\href{mailto:matteo.tacchi@gipsa-lab.fr}{matteo.tacchi@gipsa-lab.fr}}}

\maketitle
\begin{abstract}
We address the problem of computing a control for a time-dependent nonlinear system to reach a target set in a minimal time. To solve this minimal time control problem, we introduce a hierarchy of linear semi-infinite programs, the values of which converge to the value of the control problem. These semi-infinite programs are increasing restrictions of the dual of the nonlinear control problem, which is a maximization problem over the subsolutions of the Hamilton-Jacobi-Bellman (HJB) equation. Our approach is compatible with generic dynamical systems and state constraints. Specifically, we use an oracle that, for a given differentiable function, returns a point at which the function violates the HJB inequality. We solve the semi-infinite programs using a classical convex optimization algorithm with a convergence rate of $O(\frac{1}{k})$, where $k$ is the number of calls to the oracle. This algorithm yields subsolutions of the HJB equation that approximate the value function and provide a lower bound on the optimal time. We study the closed-loop control built on the obtained approximate value functions, and we give theoretical guarantees on its performance depending on the approximation error for the value function. We show promising numerical results for three non-polynomial systems with up to $6$ state variables and $5$ control variables.
\end{abstract}

\begin{keywords} Nonlinear control, Minimal time control, Weak formulation, Semi-infinite programming. \end{keywords}

\section{Introduction}

\subsection{Motivation and related works} This paper deals with the control of a deterministic
dynamical system to reach a target set in a minimal time. We consider a general case of a time-dependent nonlinear system under nonlinear state constraints.  Several applications in various fields, such as robotics \cite{jazar_theory_2010}, aerospace \cite{trelat_optimal_2012}, maritime routing \cite{mannarini_graph-search_2020} or medicine \cite{zabi_time-optimal_2017}, can be formulated as minimal time control problems. Minimal time control, also known as time optimal control,  can be seen as a special case of the general framework of Optimal Control Problems (OCP). Solving an OCP for such generic dynamics and constraints is a difficult challenge, although deep theoretical tools are available such as the Pontryagin Maximum Principle (PMP)  \cite{bourdin_pontryagin_2015,clarke_relationship_1987, pontryagin_mathematical_1987} and the Hamilton-Jacobi-Bellman (HJB) equation \cite{crandall_viscosity_1983,frankowska_optimal_1989}. Those theoretical tools, initially developed in the unconstrained setting, have been extended to the case of state constraints \cite{capuzzo-dolcetta_hamilton-jacobi_nodate,soner_optimal_1986}. From an numerical point of view, the \textit{multiple shooting} techniques \cite{pesch_practical_1996,von_stryk_direct_1992} are based on the PMP, and reduce to the solution of a two-point boundary value problem. The \textit{direct methods} reduce to the solution a nonlinear programming problem after discretizing the time space, or parameterizing the control $u(t)$ in a finite dimensional subspace \cite{trelat_optimal_2012,von_stryk_direct_1992}. The celebrated Model Predictive Control (MPC) approach belongs to the category of direct methods \cite{camacho_model_2013}. Another approach is to compute the value function of the problem as a maximal subsolution of the HJB equation \cite{vinter_convex_1993}. This approach is related to the weak formulation of the OCP, which is an infinite dimensional linear program (LP) involving occupation measures. The dual problem of this LP is exactly the problem of finding a maximal subsolution of the HJB equation \cite{hernandez-hernandez_linear_1996,lasserre_nonlinear_2008}. In \cite{henrion_linear_2014,lasserre_nonlinear_2008}, the Moment Sum-of-Squares (SoS) hierarchy is used to approximate the solution of the resulting infinite dimensional LPs, in the case where the dynamics and the constraints of the OCP are defined by polynomials. The convergence rate of this numerical scheme is studied in \cite{korda_convergence_2017} for infinite-time discounted polynomial control problems. Still in the context of polynomial control problems, a work \cite{jones_polynomial_2023} based on the dual LP and the SoS hierarchy also studies the design of a closed-loop controller based on the approximate value function that is computed. In \cite{berthier2022infinite}, an extension of the SoS hierarchy based on Kernel methods is employed to extend this computation to general nonlinear system.  Regarding the methods specifically dedicated to time-optimal control, we find the same categories: direct methods such as MPC \cite{verschueren_stabilizing_2017}, indirect methods based on the PMP and  the bang-bang property \cite{liberzon_calculus_2012,olsder_time-optimal_1975} or methods based on convex optimization \cite{leomanni_time-optimal_2022}.

\subsection{Contribution} In this paper, we focus on the problem of  computing a control to reach a target set in a minimal time. We follow the line of works that use convex optimization to solve the dual problem of the nonlinear control problem, over the subsolutions of the HJB equation \cite{henrion_linear_2014,hernandez-hernandez_linear_1996,korda_convergence_2017,lasserre_nonlinear_2008,vinter_convex_1993}. In contrast to several works using the Moment-SoS  hierarchy \cite{jones_polynomial_2023,korda_convergence_2017,lasserre_nonlinear_2008,oustry_inner_2019,sager_efficient_2015}, the dynamical system and the state constraints considered here are generic and, in particular, are not assumed to be defined by polynomials. Instead of using polynomial optimization theory and the associated positivity certificates, our approach relies on the existence of a \textit{separation oracle} capable of returning, for a given differentiable function V, a point $(t, x)$ where the function $V$ does not satisfy the HJB inequality. Such an oracle can be provided by a global optimization solver or by a sampling scheme in a black-box optimization approach. In particular, our approach is compatible with the sampled-data control paradigm \cite{berthier2022infinite,bourdin_pontryagin_2015,korda_computing_2020}. Our contribution is manifold 
\begin{itemize}
\item We introduce a hierarchy of linear semi-infinite programs, the values of which converge to the value of the control problem. After regularization, we solve these semi-infinite programs using a classical algorithm with a convergence rate in $O(\frac{1}{k})$, where $k$ is the number of calls to the oracle. This yields subsolutions of the HJB equation that lower-approximate the value function and provide a certified lower bound on the minimum time.
\item  It is known that one can leverage any function $V(t,x)$ approximating the value function, to design a closed-loop, \textit{i.e.}, feedback controller \cite{henrion_nonlinear_2008,jones_polynomial_2023}. In this paper, we study the existence of trajectories generated by such a controller.
\item We study the performance of such a closed-loop controller, depending on how well $V(t,x)$ approximates the value function, in a way distinct from the analysis in \cite{jones_polynomial_2023}. In particular, this novel analysis enables us to give a sufficient condition for the closed-loop controller to effectively generate a trajectory reaching the target set within the considered time horizon.
\item We perform numerical experiments on three non-polynomial controlled systems and compute lower and upper bounds on the minimum time.
\end{itemize}

\subsection{Mathematical notation}
For any $p \in \mathbb{N}^*$, and $k\in \mathbb{N} \cup \{ \infty\}$, we denote by $C^k(\mathbb{R}^p) = C^k(\mathbb{R}^p,\mathbb{R})$ the vector space of real-valued functions with $k$ continuous derivatives over $\mathbb{R}^p$. For a given set $A \subset  \mathbb{R}^p$, for any function $f \in C^k(\mathbb{R}^p)$, we denote by $f_{|A}$ the restriction of $f$ on $A$; moreover, we define the vector space $C^k(\mathbb{R}^p | A) = \{ f_{|A} \colon f \in C^k(\mathbb{R}^p) \}$ of restrictions on $A$ of $C^k$ functions. For any locally Lipschitz function $f$, we denote by $\partial^c f $ its Clarke subdifferential \cite{clarke_generalized_1975}, to be distinguished from $\partial_{x_i} g $, the partial derivative of a differentiable function $g$ with respect to $x_i$.

For any two Lebesgue integrable functions $f,g \in L^1(\mathbb{R}^p)$, we define the convolution product $f \star g = g \star f$ as $f \star g(x) = \int_{\mathbb{R}^p} f(x) g(x-h) dh$. We emphasize that this convolution product is also well defined if $f$ is supported on a compact set, and $g$ is locally integrable. We denote by $\mathbb{R}[x_1, \dots x_p]$ the vector space of real multivariate polynomials with variables $x_1, \dots, x_p$, and $\mathbb{R}_d[x_1, \dots x_p]$ the vector space of such real multivariate polynomials with degree at most $d$.

For any set $A \subset \mathbb{R}^p$, we write $\mathsf{conv}(A)$ for the convex hull of the set $A$. For any nonempty set $A$, and any $x \in \mathbb{R}^p$, we denote $d(x,A) =\inf_{a \in A} \lVert x - a \rVert_2$ the distance between  the set $A$ and the point $x$. We also define the contingent cone to $A$ at $x \in A$, denoted $T_A(x)$ as the set of directions  $d \in \mathbb{R}^p$, such that there exist a sequence $(t_k) \in \mathbb{R}_{++}^\mathbb{N}$, and a sequence  $(d_k) \in (\mathbb{R}^p)^\mathbb{N}$, satisfying $t_k \rightarrow 0$,  $d_k \rightarrow d$, and $x + t_k d_k \in A$, for all $k \in \mathbb{N}$. Finally, we say that a property $P$ holds  ``almost everywhere'' (a.e.) on $A$, or equivalently ``for almost all $x \in A$'', to denote that there exists a set $N$ of Lebesgue measure zero such that the property $P$ holds for all $x \in A \setminus N$.

\section{Problem statement and Linear Programming formulations}

\subsection{Definition of the minimal time control problem}
Let $n$ and $m$ be nonzero integers. We consider on $\mathbb{R}^n$ the control system
\begin{align}
\dot{x}(t) = f(t,x(t), u(t)), \label{eq:system}
\end{align}
where $f\colon \mathbb{R} \times \mathbb{R}^{n} \times \mathbb{R}^m \to \mathbb{R}^n$ is Lipschitz continuous, and where the controls are bounded measurable functions, defined on intervals $[t_0, t_1] \subset [0, T]$, and taking their values in a compact set $U$ of $\mathbb{R}^m$. Let $X$ and $K\subset X$ be compact sets of $\mathbb{R}^n$ and $x_0 \in \mathbb{R}$. For $t_0, t_1 \geq 0$, a control $u$ is said admissible on $[t_0, t_1]$ whenever the solution $x(.)$ of  \eqref{eq:system}, such that $x(t_0) = x_0$, is well defined on $[t_0, t_1]$ and satisfies the constraints
\begin{align}
(x(t),u(t)) \in X \times U, \quad \text{a. e.  on } [t_0, t_1], \label{eq:constraints}
\end{align}
and satisfies the terminal state constraint
\begin{align}
x(t_1) \in K. 
\end{align}
We denote by $\mathcal{U}(t_0,t_1,x_0)$ the set of admissible controls on $[t_0, t_1]$. We consider the question of the minimal time problem from $x_0$ to $K$, 
\begin{align}
V^* (t_0,x_0) = \underset{\begin{subarray}{c} t_1 \in [t_0, T] \\ u(\cdot) \in \mathcal{U}(t_0,t_1,x_0)\end{subarray}}{\inf} t_1-t_0.
\label{eq:inf}
\end{align}
This is a particular case of the OCP with free final time \cite{lasserre_nonlinear_2008}, associated with the cost $\int_{t_0}^{t_1} \ell(t,x(t),u(t)) dt$ for $\ell(t,x(t),u(t)) = 1$. The function $V^*$ is called the value function of this minimal time control problem: this describes the smallest time to reach the target set $K$, starting from $x_0$ at time $t_0$.

\begin{assumption}
For any $(t,x) \in [0,T] \times X$, the set $f(t,x,U)$ is convex.
\label{as:convexF}
\end{assumption}
We underline that we do not have any convexity assumption on the constraint set $X$ and on the target set $K$. 
\begin{remark}
Even if the dynamical system of interest does not satisfy Assumption~\ref{as:convexF}, we can apply the present analysis to the \textit{convexified inclusion} $\dot{x}(t) \in \mathsf{conv} \: f(t,x(t), U)$. According to the Filippov-Wa\.{z}ewski relaxation Theorem \cite[Th.~10.4.4]{aubin_set-valued_2009}, the trajectories of the original control problem are dense in the set of trajectories of the convexified inclusion. The trajectories of the convexified inclusion may be seen as the limit of chattering trajectories, \textit{i.e.}, when the control oscillates infinitely fast and where the constraint set is infinitesimally dilated. 
\end{remark}

\begin{theorem}
Under Assumption~\ref{as:convexF}, the minimal time control problem \eqref{eq:system}-\eqref{eq:inf} associated with a starting point $(t_0, x_0) \in [0, T] \times X$ is either infeasible or admits an optimal trajectory. \label{th:exopti}
\end{theorem}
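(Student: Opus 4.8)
The plan is to establish existence of a minimizer via the direct method of the calculus of variations: take a minimizing sequence of admissible trajectory-control pairs, extract convergent subsequences using compactness, and show the limit is again admissible and attains the infimum. The feasibility dichotomy is immediate—if $\mathcal{U}(t_0,t_1,x_0)$ is empty for all $t_1 \in [t_0,T]$ then the problem is infeasible by definition—so the substance is the existence claim under the standing feasibility assumption.

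\begin{proof}[Proof sketch]
Assume the problem is feasible, so that the infimum in \eqref{eq:inf} is finite; call it $\tau^* = V^*(t_0,x_0) \in [0, T - t_0]$. First I would pick a minimizing sequence $(t_1^k, u_k)$ with $u_k \in \mathcal{U}(t_0, t_1^k, x_0)$ and $t_1^k - t_0 \to \tau^*$. Let $x_k(\cdot)$ denote the corresponding trajectories, each starting at $x_0$ at time $t_0$ and satisfying $(x_k(t), u_k(t)) \in X \times U$ a.e. on $[t_0, t_1^k]$ together with $x_k(t_1^k) \in K$. By extracting a subsequence I may assume $t_1^k \to t_1^* := t_0 + \tau^* \in [t_0, T]$. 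The goal is to produce a limiting admissible pair $(t_1^*, u^*)$ whose trajectory reaches $K$ at time $t_1^*$, which then attains the infimum and is optimal.

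\end{proof}

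The core compactness argument proceeds through the trajectories rather than the controls, which is where Assumption~\ref{as:convexF} is decisive. I would extend each trajectory and its velocity to the common interval $[t_0, t_1^*]$ (or treat them on a shrinking-to-common domain), and observe that the family $\{x_k\}$ is uniformly bounded, since $x_k(t) \in X$ with $X$ compact, and uniformly Lipschitz, since $\dot{x}_k(t) = f(t, x_k(t), u_k(t))$ lies in the compact set $f([0,T]\times X \times U)$ by continuity of $f$ and compactness of the domain. By the Arzel\`a--Ascoli theorem, a subsequence of $\{x_k\}$ converges uniformly to some Lipschitz $x^*$ with $x^*(t_0) = x_0$ and $x^*(t) \in X$ for all $t$ (closedness of $X$), and $x^*(t_1^*) \in K$ (closedness of $K$, after handling the mismatch between $t_1^k$ and $t_1^*$ via uniform Lipschitz continuity). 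The velocities $\dot{x}_k$, bounded in $L^\infty$, converge weakly-$\ast$ in $L^\infty$ (equivalently weakly in $L^2$) along a further subsequence to $\dot{x}^*$.

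The crux, and the step I expect to be the main obstacle, is the \emph{closure} argument: verifying that the weak limit $\dot{x}^*(t)$ still lies in the velocity set $f(t, x^*(t), U)$ for almost every $t$, so that $x^*$ is a trajectory of the differential inclusion realizable by some admissible measurable control $u^*$. This is exactly the classical Filippov--Ce\-sa\-ri closure theorem for differential inclusions, and it requires the convexity of $f(t,x,U)$ granted by Assumption~\ref{as:convexF}: weak limits of selections of a set-valued map with closed convex values remain selections, via Mazur's lemma (strong convergence of convex combinations) combined with upper semicontinuity of $(t,x)\mapsto f(t,x,U)$ inherited from continuity of $f$. Once $\dot{x}^*(t) \in f(t,x^*(t),U)$ a.e. is established, a measurable selection theorem (Filippov's lemma) furnishes a measurable $u^*(t) \in U$ with $\dot{x}^*(t) = f(t, x^*(t), u^*(t))$ a.e., and the limiting pair $(t_1^*, u^*)$ is admissible with $t_1^* - t_0 = \tau^*$, hence optimal. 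The technical care concentrates on the closure/selection step and on reconciling the variable endpoints $t_1^k$; the remaining verifications (state and target constraints in the limit) follow routinely from compactness of $X$ and $K$ and uniform convergence.
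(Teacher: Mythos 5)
Your proposal is correct, but it takes a genuinely different route from the paper. The paper's proof is a one-paragraph citation: it invokes \cite[Th.~2.1]{vinter_convex_1993}, applied with running cost $\ell \equiv 1$, terminal cost $g \equiv 0$, set-valued map $F(t,x) = f(t,x,U)$, constraint set $A = [0,T]\times X$ and target $C = [0,T]\times K$, noting that Assumption~\ref{as:convexF} enforces Vinter's hypothesis (H2) and feasibility enforces (H4). You instead reprove the existence result from scratch via the direct method: minimizing sequence, Arzel\`a--Ascoli for the uniformly Lipschitz trajectories, weak-$\ast$ compactness of the velocities, the Filippov--Cesari closure argument (Mazur's lemma plus upper semicontinuity of $(t,x)\mapsto f(t,x,U)$, which is exactly where Assumption~\ref{as:convexF} enters), and Filippov's measurable selection lemma to recover an admissible control. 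This is essentially the classical proof that underlies the cited theorem, so both arguments hinge on convexity at the same step — closure of the velocity sets under weak limits. What each buys: the paper's citation is economical and, importantly, the same theorem of Vinter is reused verbatim for the no-duality-gap result (Th.~\ref{th:duality}), so the citation does double duty; your argument is self-contained, avoids verifying Vinter's hypotheses (H1)--(H5), and makes the role of Assumption~\ref{as:convexF} explicit. One small simplification you missed: since $\tau^*$ is the infimum, every feasible time satisfies $t_1^k - t_0 \geq \tau^*$, i.e.\ $t_1^k \geq t_1^*$, so the trajectories are already defined on all of $[t_0, t_1^*]$ and no extension or shrinking-domain device is needed — the endpoint mismatch is handled by the uniform Lipschitz bound alone.
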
 
\begin{proof}
We consider the case where a feasible trajectory exists. This is a direct application of \cite[Th.~2.1]{vinter_convex_1993}, which, among others, characterizes the existence of an optimal trajectory for a control problem over a differential inclusion. To emphasize the correspondence with the notation of \cite{vinter_convex_1993}, we highlight that we apply the theorem with: the running cost function $\ell(t,x,p) = 1$, the terminal cost function $g(t,x) = 0$, the set-valued map $F(t,x) = f(t,x, U)$, the constraint set $A = [0,T] \times X$ and the target set $C = [0,T] \times K$. We underline that the assumptions (H1)-(H5) in \cite{vinter_convex_1993} are satisfied here; more precisely, we highlight that our Assumption~\ref{as:convexF} enforces (H2) and the hypothesis that a feasible trajectory exists enforces (H4).
\end{proof}

\subsection{Hamilton-Jacobi-Bellman equation and subsolutions}
In optimal control theory, a well-known sufficient condition for a function $V$ to be the value function $V^*$ is to satisfy the Hamilton-Jacobi-Bellman (HJB) Partial Differential Equation (PDE). This PDE may be seen as a continuous time generalization of Bellman's dynamic programming optimality principle in discrete time \cite{bellman_dynamic_1966}. In our minimal time control setting, the HJB PDE reads
\begin{align}
\partial_t V (t,x) + \min_{u \in U} \{ 1 + \nabla_x V(t,x)^\top f(t,x,u) \} = 0, \quad \forall (t,x) \in [0,T] \times X  \label{eq:hjb1}\\
V(t,x) = 0, \quad \forall (t,x) \in [0,T] \times K. \label{eq:hjb2}
\end{align}
In general, differentiable solutions of this PDE may not exist, so the concept of viscosity solutions is typically used \cite{crandall_viscosity_1983}. Another approach to get around the lack of a differentiable solution to the HJB PDE consists in leveraging the concept of subsolutions \cite{vinter_convex_1993}, \textit{i.e.}, functions $V \in C^1(\mathbb{R}^{n+1})$ satisfying the following inequalities:
\begin{align}
\partial_t V (t,x) + \min_{u \in U} \{ 1 + \nabla_x V(t,x)^\top f(t,x,u) \} \geq 0, \quad \forall (t,x) \in [0,T] \times X  \label{eq:subsol1}\\
V(t,x) \leq 0, \quad \forall (t,x) \in [0,T] \times K. \label{eq:subsol2}
\end{align}
The following lemma states that any subsolution of the HJB PDE is an under-approximation of the value function.
\begin{lemma}
For any $V \in C^1(\mathbb{R}^{n+1})$ satisfying Eqs.~\eqref{eq:subsol1}-\eqref{eq:subsol2}, the following holds:
\begin{align*}
V(t,x) \leq V^*(t,x), \quad \forall (t,x) \in [0,T] \times X.
\end{align*}
\label{lem:subsol}
\end{lemma}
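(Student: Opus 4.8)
The plan is to run a standard verification argument: evaluate the candidate subsolution $V$ along an arbitrary admissible trajectory, show that along such a trajectory $V$ decreases no faster than at unit rate, and then integrate. First I would dispose of the trivial case. If the minimal time problem from $(t_0,x_0)$ is infeasible, then $\mathcal{U}(t_0,t_1,x_0)=\emptyset$ for every $t_1$, so the infimum in \eqref{eq:inf} is taken over the empty set and $V^*(t_0,x_0)=+\infty$, making the inequality vacuous. Hence I would fix $(t_0,x_0)\in[0,T]\times X$, assume feasibility, and pick any $t_1\in[t_0,T]$ together with any admissible control $u(\cdot)\in\mathcal{U}(t_0,t_1,x_0)$, with associated trajectory $x(\cdot)$ satisfying $x(t_0)=x_0$, $(x(t),u(t))\in X\times U$ a.e., and $x(t_1)\in K$. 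Note that I do not need the existence result of Theorem~\ref{th:exopti}, since I take an infimum over all admissible controls rather than an attained optimum.

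Next I would study the scalar function $\phi(t):=V(t,x(t))$ on $[t_0,t_1]$. Since $x(\cdot)$ solves \eqref{eq:system} with a measurable control and a Lipschitz right-hand side, it is absolutely continuous, and since $V\in C^1(\mathbb{R}^{n+1})$, the composition $\phi$ is absolutely continuous as well. At almost every $t$ the derivative $\dot x(t)=f(t,x(t),u(t))$ exists and the chain rule gives
\[
\dot\phi(t) = \partial_t V(t,x(t)) + \nabla_x V(t,x(t))^\top f(t,x(t),u(t)).
\]
Because $x(\cdot)$ is continuous, $X$ is closed, and $x(t)\in X$ a.e., the set $\{t:x(t)\in X\}$ is closed and of full measure, hence equal to $[t_0,t_1]$; thus $x(t)\in X$ for every $t$ and the subsolution inequality \eqref{eq:subsol1} applies at $(t,x(t))$. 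Combining it with the elementary bound $1+\nabla_x V(t,x(t))^\top f(t,x(t),u(t))\geq \min_{u\in U}\{1+\nabla_x V(t,x(t))^\top f(t,x(t),u)\}$, valid since $u(t)\in U$, I obtain $\dot\phi(t)\geq -1$ for almost every $t\in[t_0,t_1]$.

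Integrating this differential inequality from $t_0$ to $t_1$ yields $V(t_1,x(t_1)) - V(t_0,x_0) \geq -(t_1-t_0)$, that is, $V(t_0,x_0)\leq V(t_1,x(t_1)) + (t_1-t_0)$. Since $x(t_1)\in K$, the terminal subsolution inequality \eqref{eq:subsol2} gives $V(t_1,x(t_1))\leq 0$, whence $V(t_0,x_0)\leq t_1-t_0$. As this holds for every admissible pair $(t_1,u(\cdot))$, taking the infimum over the feasible set in \eqref{eq:inf} gives $V(t_0,x_0)\leq V^*(t_0,x_0)$, as desired.

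The only point requiring genuine care, the \emph{main obstacle}, is the regularity bookkeeping underlying the chain rule and the fundamental theorem of calculus: the integrand $\dot\phi$ is defined only almost everywhere (where $\dot x$ exists), so I must justify that $\phi$ is absolutely continuous and $\dot\phi$ integrable, ensuring $\phi(t_1)-\phi(t_0)=\int_{t_0}^{t_1}\dot\phi(t)\,dt$. This follows from the absolute continuity of $x(\cdot)$, the boundedness of $f$ on the compact set $[0,T]\times X\times U$, and the $C^1$ regularity of $V$ on the compact range of the trajectory, which together make $\dot\phi$ essentially bounded; every other step is elementary.
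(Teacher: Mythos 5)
Your proof is correct, and it follows the same verification core as the paper's argument (chain rule along a trajectory, the subsolution inequality \eqref{eq:subsol1} giving $\dot\phi \geq -1$, integration, then the terminal inequality \eqref{eq:subsol2}), but it differs in one genuine and worthwhile respect. The paper fixes the \emph{optimal} trajectory, whose existence is supplied by Theorem~\ref{th:exopti}, and therefore implicitly relies on Assumption~\ref{as:convexF} (convexity of the velocity sets $f(t,x,U)$), even though that hypothesis is not stated in the lemma. You instead run the argument along an \emph{arbitrary} admissible trajectory, obtaining $V(t_0,x_0)\leq t_1-t_0$ for every admissible pair $(t_1,u(\cdot))$, and conclude by taking the infimum defining $V^*$ in \eqref{eq:inf}. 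This buys you something real: your proof needs no existence theorem and no convexity assumption, so the lemma holds exactly as stated, for any system meeting the standing regularity hypotheses. What the paper's route buys in exchange is brevity once Theorem~\ref{th:exopti} is in hand, since one integration along a single trajectory immediately produces $V^*(t,x)$ on the right-hand side rather than an infimum to be taken at the end. Your regularity bookkeeping (absolute continuity of $t\mapsto V(t,x(t))$, essential boundedness of its derivative, and the closedness argument showing $x(t)\in X$ for all $t$) is careful and correct; note only that the last point is not strictly needed, since the a.e.\ constraint $(x(t),u(t))\in X\times U$ combined with the a.e.\ chain rule already yields $\dot\phi(t)\geq -1$ almost everywhere.
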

\begin{proof} We take any $(t,x) \in [0,T] \times X$ and we consider that $V^*(t,x)< \infty$, since the case $V^*(t,x) = \infty$ is trivial. Hence, according to Th.~\ref{th:exopti}, there exists an admissible control $\Bar{u}(\cdot) \in \mathcal{U}(t,t_1, x)$ for $t_1 \in [t, T]$ such that  $V^*(t,x) = t_1 - t$, and an associate trajectory $\Bar{x}(t)$ such that $\Bar{x}(t) = x$ and $\Bar{x}(t_1) \in K$. We observe that $\frac{d}{dt} [V(t,\Bar{x}(t))] = \partial_t V(t,\Bar{x}(t)) + \nabla_x V (t, \Bar{x}(t))^\top f(t,\Bar{x}(t),\Bar{u}(t)) \geq -1$ a.e. on $[t,t_1]$, the inequality holding since  $V$ satisfies Eq.~\eqref{eq:subsol1}. By integration, we observe that $V(t_1,\Bar{x}(t_1)) - V(t,x) \geq t - t_1 = - V^*(t,x)$, \textit{i.e.}, $V(t_1,\Bar{x}(t_1)) + V^*(t,x) \geq V(t,x)$.   As $V$ satisfies Eq.~\eqref{eq:subsol2} and as $\Bar{x}(t_1) \in K$, we observe that $0 \geq V(t_1,\Bar{x}(t_1))$, and therefore $V^*(t,x) \geq V(t,x)$.
\end{proof}

\subsection{Infinite dimensional Linear Programming formulations}
In the rest of the paper, we consider a given point $x_0 \in X$, and we raise the issue of computing the minimal time from $x_0$ to $K$ and the associated control. We make the following assumption:
\begin{assumption}
There exists an admissible control $u \in \mathcal{U}(0,t_1,x_0)$ associated with $t_1 \in [0, T]$. In other words, $V^*(0,x_0) \leq t_1 < \infty$.
\label{as:finite}
\end{assumption}
We consider the optimization problem of finding the subsolution of the HJB PDE that maximizes the evaluation in $(0, x_0)$. This problem may be cast as an infinite dimensional linear program:
\begin{align}
\begin{array}{rll}
\underset{V \in \mathcal{F}} \sup & V(0,x_0) & \\
\text{s.t.} & \partial_t V (t,x) + 1 +  \nabla_x V(t,x)^\top f(t,x,u) \geq 0 & \forall (t,x,u) \in [0,T] \times X \times U \\
& V(t,x) \leq 0 &  \forall (t,x) \in [0,T] \times K,
\end{array}
\tag{\mbox{$D_\mathcal{F}$}}
\label{eq:lpc1}
\end{align}
with $\mathcal{F} \in \{ C^1(\mathbb{R}^{n+1}), C^\infty(\mathbb{R}^{n+1}),\mathbb{R}[t,x_1,\dots, x_n] \}$. For a given $V \in \mathcal{F}$, the feasibility in \eqref{eq:lpc1} is clearly equivalent to the satisfaction of Eqs.~\eqref{eq:subsol1}-\eqref{eq:subsol2}. We also note that this infinite dimensional LP formulation corresponds to the dual LP formulation in \cite{lasserre_nonlinear_2008}; in fact, this is the dual problem of an infinite dimensional LP formulation of the control problem based on occupation measures. According to the next theorem, the problem \eqref{eq:lpc1} on $C^1$ functions has the same value as the minimal time control problem.
\begin{theorem}
Under Assumption~\ref{as:convexF} and Assumption~\ref{as:finite}, and for $\mathcal{F} = C^1(\mathbb{R}^{n+1})$, the value of the LP formulation \eqref{eq:lpc1} equals $V^*(0,x_0)$.
\label{th:duality}
\end{theorem}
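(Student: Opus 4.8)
The plan is to prove the two inequalities separately, with all the content residing in one of them. The easy direction, namely that the value of \eqref{eq:lpc1} is at most $V^*(0,x_0)$, is immediate: for $\mathcal{F}=C^1(\mathbb{R}^{n+1})$, a function $V$ is feasible for \eqref{eq:lpc1} precisely when it is a $C^1$ subsolution in the sense of \eqref{eq:subsol1}-\eqref{eq:subsol2}, so Lemma~\ref{lem:subsol} gives $V(0,x_0)\le V^*(0,x_0)$, and taking the supremum over feasible $V$ yields the bound. Assumption~\ref{as:finite} enters only to guarantee that the right-hand side is finite, so that the statement is non-trivial; this is the weak-duality half of the theorem.

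The whole difficulty is therefore the reverse inequality, i.e. the absence of a duality gap. The natural candidate maximizer is $V^*$ itself, which solves the HJB equation \eqref{eq:hjb1}-\eqref{eq:hjb2} in the viscosity sense, but in general $V^*\notin C^1(\mathbb{R}^{n+1})$ (under the state constraint \eqref{eq:constraints} it may even be discontinuous), so it is inadmissible in \eqref{eq:lpc1}. My plan is to approximate it from below by genuine smooth subsolutions. For $\varepsilon>0$ I would (i) replace $V^*$ by a Lipschitz, one-sided-approximating surrogate via a standard inf/sup-convolution step, then (ii) mollify it as $V^*\star\rho_\delta$ against a smooth compactly supported kernel $\rho_\delta$, the convolution being well defined as recalled in the notation section. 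The key structural fact is that the Hamiltonian $p\mapsto\min_{u\in U}\{1+p^\top f(t,x,u)\}$ is concave, being a minimum of affine functions, so Jensen's inequality ensures that mollification \emph{preserves} the subsolution inequality \eqref{eq:subsol1} up to an error that is $O(\delta)$ through the Lipschitz modulus of $f$ in $(t,x)$ and the gradient bound on the surrogate. I would then subtract a small correction of the form $c_\varepsilon(T-t)+c_\varepsilon'$ to turn the approximate inequality into a strict instance of \eqref{eq:subsol1} and to enforce \eqref{eq:subsol2} on $K$, while keeping $V_\varepsilon(0,x_0)\ge V^*(0,x_0)-\varepsilon$; letting $\varepsilon\to 0$ closes the gap. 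An alternative and arguably safer route is to invoke infinite-dimensional linear programming duality directly: \eqref{eq:lpc1} is the dual of the occupation-measure relaxation of \eqref{eq:inf}, whose value equals $V^*(0,x_0)$ under Assumption~\ref{as:convexF} by the relaxation theory underlying Theorem~\ref{th:exopti} \cite{vinter_convex_1993}, and strong duality for this primal-dual pair over smooth test functions is established in \cite{lasserre_nonlinear_2008}; citing these yields the equality without constructing $V_\varepsilon$ by hand.

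The main obstacle is the regularity of $V^*$. In the constrained setting, $V^*$ can be discontinuous, and near points where it jumps the mollification error in \eqref{eq:subsol1} is not uniformly small, so the restoring correction $c_\varepsilon(T-t)$ risks destroying the closeness of $V_\varepsilon(0,x_0)$ to $V^*(0,x_0)$. Controlling this cleanly requires either a controllability / inward-pointing condition on the boundary of $X$ that guarantees (Lipschitz) continuity of $V^*$, or working with the convexified inclusion $\dot x\in\mathsf{conv}\,f(t,x,U)$, for which Assumption~\ref{as:convexF} already secures existence of optimal trajectories through Theorem~\ref{th:exopti} and the relaxation theorem makes the two value functions coincide. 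A secondary technical point is that the mollification must act on an extension of $V^*$ to a neighborhood in $\mathbb{R}^{n+1}$, so one must verify that \eqref{eq:subsol1} is only required on $[0,T]\times X$ and that the correction absorbs the discrepancy created near $\partial X$. For these reasons I expect the duality route via \cite{lasserre_nonlinear_2008,vinter_convex_1993} to be the most robust to write rigorously, with the explicit mollification argument serving as the constructive intuition for why the gap vanishes.
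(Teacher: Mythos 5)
Your recommended route is exactly the paper's proof: the paper establishes Theorem~\ref{th:duality} by directly invoking \cite[Th.~2.1]{vinter_convex_1993} (absence of duality gap between the state-constrained differential-inclusion problem and maximization over $C^1$ subsolutions of the HJB inequality), with Assumption~\ref{as:convexF} enforcing hypothesis (H2) and Assumption~\ref{as:finite} enforcing (H4) of that theorem --- so it is needed for strong duality to apply at all, not merely to make the statement non-trivial --- while your easy direction is the paper's Lemma~\ref{lem:subsol}. You were also right to distrust the constructive mollification of $V^*$: under state constraints $V^*$ may be discontinuous and need not satisfy the subsolution inequality in the a.e.\ sense that mollification requires on a neighborhood of $[0,T]\times X$, and since \eqref{eq:lpc1} only evaluates candidates at $(0,x_0)$, approximating $V^*$ everywhere is strictly harder than what the theorem demands --- which is precisely why the paper (and you, ultimately) defer to the measure-theoretic duality of \cite{vinter_convex_1993} rather than build the maximizing sequence by hand.
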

\begin{proof}
As for the proof of Th.~\ref{th:exopti}, this is a direct application of \cite[Th.~2.1]{vinter_convex_1993}, which also states the absence of duality gap between a control problem over a differential inclusion and a maximization problem over subsolutions of the HJB equation. We underline that the assumptions (H1)-(H5) in \cite{vinter_convex_1993} are satisfied here; more precisely, our Assumption~\ref{as:convexF} enforces (H2) and our Assumption~\ref{as:finite} enforces (H4).
\end{proof}
Theorem~\ref{th:smoothvf} extends this result by stating that we can require the subsolutions of the HJB equation to be in $C^\infty(\mathbb{R}^{n+1})$,  while preserving the value of \eqref{eq:lpc1}. Before stating this theorem, we introduce an auxiliary lemma.
\begin{lemma}
For any $V \in C^1(\mathbb{R}^{n+1})$ satisfying Eqs.~\eqref{eq:subsol1}-\eqref{eq:subsol2} with feasibility error less or equal than $\eta \geq 0$, $V(t,x) + \eta(t-1-T)$ satisfies Eqs.~\eqref{eq:subsol1}-\eqref{eq:subsol2}.\label{lem:feas}
\end{lemma}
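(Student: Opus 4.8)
The plan is to read the hypothesis ``feasibility error less or equal than $\eta$'' as the two relaxed inequalities
\begin{align*}
\partial_t V (t,x) + \min_{u \in U} \{ 1 + \nabla_x V(t,x)^\top f(t,x,u) \} &\geq -\eta, \quad \forall (t,x) \in [0,T] \times X,\\
V(t,x) &\leq \eta, \quad \forall (t,x) \in [0,T] \times K,
\end{align*}
and then to exhibit the explicit affine-in-time correction $W(t,x) := V(t,x) + \eta(t-1-T)$ and verify that it satisfies Eqs.~\eqref{eq:subsol1}-\eqref{eq:subsol2} exactly. The first observation is that the added term $\eta(t-1-T)$ depends only on $t$, so that $\nabla_x W = \nabla_x V$ while $\partial_t W = \partial_t V + \eta$; the whole argument is therefore a matter of bookkeeping on these two derivatives and on the sign of the affine factor.

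For the HJB inequality \eqref{eq:subsol1}, I would substitute these derivatives and use that the minimum over $u$ involves only $\nabla_x W = \nabla_x V$:
\begin{align*}
\partial_t W(t,x) + \min_{u \in U} \{ 1 + \nabla_x W(t,x)^\top f(t,x,u) \} = \eta + \Big( \partial_t V(t,x) + \min_{u \in U} \{ 1 + \nabla_x V(t,x)^\top f(t,x,u) \} \Big) \geq \eta - \eta = 0,
\end{align*}
the inequality being exactly the relaxed first constraint. Thus the $+\eta$ produced by differentiating the correction in $t$ is calibrated to absorb the $-\eta$ slack, which is precisely why the slope of the correction is taken equal to $\eta$.

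For the terminal inequality \eqref{eq:subsol2}, I would combine the relaxed second constraint with the sign of the affine factor on the relevant time window. For $(t,x) \in [0,T] \times K$ one has $t - 1 - T \leq -1$, hence $\eta(t-1-T) \leq -\eta$ since $\eta \geq 0$, and therefore
\begin{align*}
W(t,x) = V(t,x) + \eta(t-1-T) \leq \eta + \eta(t-1-T) = \eta(t-T) \leq 0,
\end{align*}
where the last step uses $t \leq T$. This is the only place where the particular constant $-1-T$ intervenes: it guarantees $t-1-T \leq -1$ uniformly on $[0,T]$, so that the correction is negative enough to cancel the $+\eta$ terminal slack.

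The computation is elementary and I do not anticipate a genuine obstacle; the only point demanding care is the joint calibration of the correction. One must select an affine function of $t$ whose slope $(+\eta)$ repairs \eqref{eq:subsol1} and whose value on $[0,T]$ is at most $-\eta$ to repair \eqref{eq:subsol2}, and the constant $-1-T$ is exactly what makes both repairs happen at once.
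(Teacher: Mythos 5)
Your proposal is correct and follows essentially the same argument as the paper: the same affine correction $\eta(t-1-T)$, the same observation that its time derivative $+\eta$ absorbs the $-\eta$ slack in the HJB inequality, and the same bound $\eta + \eta(t-1-T) \leq 0$ for $t \leq T$ on the terminal constraint. Your reading of ``feasibility error $\leq \eta$'' as the two relaxed inequalities is also exactly the one the paper uses.
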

\begin{proof}
We introduce $\tilde{V}(t,x) = V(t,x) + \eta(t-1-T)$. By assumption on $V(t,x)$, we have $\partial_t V (t,x) + 1 +  \nabla_x V(t,x)^\top f(t,x,u) \geq -\eta$, for all $(t,x,u) \in [0,T] \times X \times U$. By linearity, and since $\partial_t (t-1-T) = 1$ and $\nabla_x (t-1-T) = 0$,  $\partial_t \tilde{V} (t,x) + 1 +  \nabla_x \tilde{V}(t,x)^\top f(t,x,u) \geq 0$. By assumption on $V(t,x)$, we have $V(t,x) \leq \eta$, for all $(t,x) \in [0,T] \times K$. Hence, $\tilde{V}(t,x) \leq \eta + \eta(t-1-T) \leq \eta + \eta(T-1-T) = 0$ for all $(t,x) \in [0,T] \times K$.
\end{proof}
\begin{theorem}
Under Assumption~\ref{as:convexF} and Assumption~\ref{as:finite}, and for $\mathcal{F} = C^\infty(\mathbb{R}^{n+1})$, the value of the LP formulation \eqref{eq:lpc1} equals $V^*(0,x_0)$.
\label{th:smoothvf}
\end{theorem}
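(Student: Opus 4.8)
The plan is to prove the claimed equality by a two-sided inequality, the trivial direction following from an inclusion of feasible sets and the nontrivial direction relying on the mollification of $C^1$ subsolutions. Since $C^\infty(\mathbb{R}^{n+1}) \subset C^1(\mathbb{R}^{n+1})$, the feasible set of \eqref{eq:lpc1} for $\mathcal{F} = C^\infty(\mathbb{R}^{n+1})$ is contained in the one for $\mathcal{F} = C^1(\mathbb{R}^{n+1})$; as \eqref{eq:lpc1} is a maximization problem, its value for $\mathcal{F} = C^\infty$ is therefore at most its value for $\mathcal{F}=C^1$, which equals $V^*(0,x_0)$ by Theorem~\ref{th:duality}. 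It thus remains to establish the reverse inequality, namely that $C^\infty$ subsolutions can approach the value $V^*(0,x_0)$.

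First I would fix $\delta > 0$ and, invoking Theorem~\ref{th:duality}, select a $C^1$ subsolution $V$ with $V(0,x_0) \geq V^*(0,x_0) - \delta$. I would then introduce a standard mollifier, that is, a nonnegative $\phi \in C^\infty(\mathbb{R}^{n+1})$ supported on the unit ball with $\int \phi = 1$, set $\phi_\varepsilon(\cdot) = \varepsilon^{-(n+1)}\phi(\cdot/\varepsilon)$, and define $V_\varepsilon = V \star \phi_\varepsilon$. Because $\phi_\varepsilon$ has compact support and $V$ is locally integrable, $V_\varepsilon \in C^\infty(\mathbb{R}^{n+1})$, differentiation commutes with the convolution so that $\nabla V_\varepsilon = (\nabla V) \star \phi_\varepsilon$, and both $V_\varepsilon \to V$ and $\nabla V_\varepsilon \to \nabla V$ uniformly on every compact set as $\varepsilon \to 0$.

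The main work is to bound the feasibility error of $V_\varepsilon$ on $[0,T]\times X$. Writing $g(t,x,u) = \partial_t V(t,x) + 1 + \nabla_x V(t,x)^\top f(t,x,u)$, which is continuous on $\mathbb{R}^{n+1}\times U$ and nonnegative on $[0,T]\times X \times U$, the expression $\partial_t V_\varepsilon(t,x) + 1 + \nabla_x V_\varepsilon(t,x)^\top f(t,x,u)$ can be rewritten as the convolution of $g(\cdot,\cdot,u)$ against $\phi_\varepsilon$, plus a correction term coming from the fact that in $g$ the dynamics $f$ are evaluated at the integration point, whereas in $V_\varepsilon$ they are frozen at $(t,x)$. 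This correction is controlled by $\sup \lVert \nabla_x V \rVert \cdot L \cdot \varepsilon$, using the Lipschitz continuity of $f$ (with constant $L$) and the boundedness of $\nabla V$ on the compact $\varepsilon$-neighborhood of $[0,T]\times X$; the convolution term is bounded below by the infimum of $g(\cdot,\cdot,u)$ over that same neighborhood, which tends to $0$ as $\varepsilon \to 0$ by the uniform continuity of $g$ and its nonnegativity on $[0,T]\times X \times U$. An identical continuity argument applied to \eqref{eq:subsol2} yields $V_\varepsilon \leq o(1)$ on $[0,T]\times K$. Hence $V_\varepsilon$ satisfies \eqref{eq:subsol1}-\eqref{eq:subsol2} with a feasibility error $\eta(\varepsilon) \to 0$.

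Finally, I would apply Lemma~\ref{lem:feas} to $V_\varepsilon$: the function $\tilde{V}_\varepsilon(t,x) = V_\varepsilon(t,x) + \eta(\varepsilon)(t-1-T)$ is an \emph{exact} $C^\infty$ subsolution, hence feasible for \eqref{eq:lpc1} with $\mathcal{F}=C^\infty$, and its objective value is $\tilde{V}_\varepsilon(0,x_0) = V_\varepsilon(0,x_0) - \eta(\varepsilon)(1+T)$. Letting $\varepsilon \to 0$, the convergence $V_\varepsilon(0,x_0)\to V(0,x_0)$ together with $\eta(\varepsilon)\to 0$ gives $\limsup_{\varepsilon \to 0} \tilde{V}_\varepsilon(0,x_0) = V(0,x_0) \geq V^*(0,x_0)-\delta$. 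Since each $\tilde{V}_\varepsilon$ is feasible and $\delta>0$ is arbitrary, the value of \eqref{eq:lpc1} for $\mathcal{F}=C^\infty$ is at least $V^*(0,x_0)$, which closes the argument. I expect the delicate step to be the feasibility-error estimate, precisely because the gradient of $V_\varepsilon$ at $(t,x)$ is a spatial average of $\nabla V$ whereas $f$ is evaluated at the single point $(t,x)$; reconciling these through the Lipschitz bound on $f$, while simultaneously handling the leakage of the convolution outside $[0,T]\times X$, is where the care is needed.
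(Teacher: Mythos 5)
Your proposal is correct and follows essentially the same route as the paper's proof: mollify a near-optimal $C^1$ subsolution, split the HJB expression into a convolution of the original (nonnegative) HJB residual plus a correction term bounded via the Lipschitz constant of $f$, control the leakage onto an $\varepsilon$-neighborhood by uniform continuity (the paper packages this as Lemma~\ref{lem:valuefunction}), and repair the resulting small feasibility error with Lemma~\ref{lem:feas}. The only cosmetic difference is that you state the trivial inequality $\mathsf{val}_{C^\infty} \leq V^*(0,x_0)$ explicitly, which the paper leaves implicit.
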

\begin{proof} We consider $\mathcal{F} = C^\infty(\mathbb{R}^{n+1})$ and we use the notation Y to denote the compact set $[0, T] \times X$. We fix $\epsilon > 0$, and we will prove that there exists $V \in C^\infty(\mathbb{R}^{n+1})$ that is feasible in \eqref{eq:lpc1} and such that $V(0,x_0) \geq V^*(0,x_0) - \epsilon$. According to Th.~\ref{th:duality}, there exists $V_1 \in C^1(\mathbb{R}^{n+1})$ that is feasible in \eqref{eq:lpc1} and such that $V_1(0,x_0) \geq V^*(0,x_0) - \frac{\epsilon}{2}$. For any $\sigma \in (0,1]$, we introduce the mollified function $V_{1 \sigma} = V_1 \ast \phi_\sigma \in C^\infty(\mathbb{R}^{n+1})$, where $\mol_\sigma$ is the standard mollifier defined as $\mol_\sigma(y) = \frac{1}{\sigma^{n+1}} \mol(y/\sigma)$, where $\mol(y) = \left \lbrace \begin{array}{ll}
\xi e^{-\frac{1}{1-\lVert y \rVert^2}} & \text{ if } \lVert y \rVert < 1 \\
0 & \text{ if } \lVert y \rVert \geq 1
\end{array} \right.$ for a given constant $\xi > 0$ such that $\int_{\mathbb{R}^{n+1}} \mol(y)dy = 1$. Hence, a simple change of variable shows that $\int_{\mathbb{R}^{n+1}} \mol_\sigma(y)dy = 1$. We also underline that $\mol_\sigma$ is non-negative and supported on the ball $B(0,\sigma)$. For any $y = (t,x) \in Y$ and any $\sigma \in (0,1]$, we have that $|V_1(y) - V_{1\sigma}(y)| = |V_1(y) - \int_{B(0,\sigma)} V_1(y - h) \mol_\sigma(h) dh | =   |\int_{B(0,\sigma)} (V_1(y) - V_1(y - h)) \mol_\sigma(h) dh |$ as $\int_{B(0,\sigma)} \mol_\sigma(h)dh = 1$. We denote by $L_V$ an upper bound for the continuous function $\rVert \nabla V_1(y) \lVert_2$ over the compact set $\hat{Y} = \{ y \in \mathbb{R}^{n+1} \colon d(y,Y) \leq 1 \}$, which is a Lipschitz constant for the function $V_1$. We deduce, by triangular inequality and non-negativity of $\mol_\sigma$ that for any $y \in Y$,
\begin{align}
|V_1(y) - V_{1\sigma}(y)| & \leq \int_{B(0,\sigma)} |V_1(y) - V_1(y - h)| \mol_\sigma(h) dh \\
& \leq \int_{B(0,\sigma)} L_V \lVert h \rVert \mol_\sigma(h) dh \\
& \leq L_V \sigma  \int_{B(0,\sigma)} \lVert h/\sigma \rVert \mol(h/\sigma) \frac{1}{\sigma^{n+1}} dh  \\
& \leq L_V \sigma \underbrace{\int_{B(0,1)} \lVert \tilde{h} \rVert \mol(\tilde{h}) d\tilde{h}.}_{\text{constant, denoted $\mathcal{I}$.}} \label{eq:boundingdiff}
\end{align}
By property of the mollifiers \cite{hormander_analysis_2003}, we have $\partial_i V_{1\sigma}(y) = \partial_i ( V_1 \ast \mol_\sigma) = (\partial_i V_1 \ast \mol_\sigma)$ for any $i \in \{t,x_1, \dots, x_n\}$. Therefore, $\partial_t V_{1\sigma} (y) =  \int_{B(0,\sigma)} \partial_t V_{1} (y-h)\mol_\sigma(h)dh$ and $\nabla_x V_{1\sigma}(y) = \int_{B(0,\sigma)} \nabla_x V_{1} (y-h)\mol_\sigma(h)dh$. Using the equality $\int_{B(0,\sigma)} \mol_\sigma(h)dh = 1$, we deduce that for any $y \in Y$,
\begin{align}
\hspace{-0.5cm} \partial_t V_{1\sigma} (y) + 1 +  (\nabla_x V_{1\sigma}(y))^\top f(y,u) &= \int_{B(0,\sigma)} (\partial_t V_{1} (y-h) + 1 +  (\nabla_x V_{1}(y-h))^\top f(y,u))\mol_\sigma(h)dh \label{eq:decfirst} \\
& = \int_{B(0,\sigma)} (\partial_t V_{1} (y-h) + 1 +  (\nabla_x V_{1}(y-h))^\top f(y-h,u))\mol_\sigma(h)dh  \\ & \quad \quad \quad + \int_{B(0,\sigma)} \nabla_x V_{1}(y-h)^\top(f(y,u) - f(y-h,u))\mol_\sigma(h)dh.  \label{eq:declast} 
\end{align}
We compute lower bounds for the two terms of the sum. We start with the second term:  using Cauchy-Schwarz inequality, we notice that $\int_{B(0,\sigma)} \nabla_x V_{1}(y-h)^\top(f(y,u) - f(y-h,u))\mol_\sigma(h)dh \geq - \int_{B(0,\sigma)} \lVert V_{1}(y-h) \rVert \lVert f(y,u) - f(y-h,u) \rVert \mol_\sigma(h)dh$. Noticing that $\lVert \nabla_x V_{1}(y-h) \rVert \leq L_V$, since $y-h \in \hat{Y}$ for any $h \in  B(0,\sigma) \subset B(0,1)$, and introducing the Lipschitz constant $L_f$ for $f$, we have 
\begin{align}
\int_{B(0,\sigma)} \nabla_x V_{1}(y-h)^\top(f(y,u) - f(y-h,u))\mol_\sigma(h)dh \geq  - L_V L_f \int_{B(0,\sigma)} \lVert h \rVert  \mol_\sigma(h)dh
= - L_V L_f \sigma \mathcal{I}. \label{eq:bound1}
\end{align}
We define $\eta = \frac{\epsilon}{2(T+2)}$. We introduce the compact set $Z = [0,T] \times X \times U$ and the family of compact sets $Z_\delta = \{ z \in  \mathbb{R}^N \: \colon \: d(z,Z)  \leq \delta \}$ for $\delta \in (0, 1]$. For any $z = (y,u) \in Z_1$, we introduce $\psi(z) = \partial_t V_{1} (y) + 1 +  (\nabla_x V_{1}(y))^\top f(y,u)$. The function $\psi(z)$ is continuous and according to Lemma~\ref{lem:valuefunction}, there exists $\sigma_1 > 0$ such that $\min_{z \in Z_{\sigma}} \psi(z)  \geq \min_{z \in Z}  \psi(z) - \frac{\eta}{2}$ for any $\sigma \in (0, \sigma_1]$. By feasibility of $V_1$ in \eqref{eq:lpc1}, we know that $\min_{z \in Z}  \psi(z) \geq 0$, which yields that $\psi(z) \geq - \frac{\eta}{2}$ for any $z \in Z_{\sigma_1}$. We deduce that 
\begin{align}
\int_{B(0,\sigma)} (\partial_t V_{1} (y-h) + 1 +  (\nabla_x V_{1}(y-h))^\top f(y-h,u))\mol_\sigma(h)dh \geq - \int_{B(0,\sigma)}  \frac{\eta}{2} \mol_\sigma(h)dh = - \frac{\eta}{2}, \label{eq:bound2}
\end{align}
since $(y-h,u) \in Z_\sigma$ for any $h \in B(0,\sigma)$. Combining the decomposition of Eqs.~\eqref{eq:decfirst}-\eqref{eq:declast}, with the lower bounds of Eq.~\eqref{eq:bound1} and \eqref{eq:bound2}, we deduce that
\begin{align}
\partial_t V_{1\sigma} (y) + 1 +  (\nabla_x V_{1\sigma}(y))^\top f(y,u) \geq - (L_V L_f \sigma \mathcal{I} + \frac{\eta}{2}), \label{eq:boundinghjb}
\end{align}
for any $(y,u) = (t,x,u) \in [0,T] \times X \times U$ and $\sigma \in (0,\sigma_1]$. We define $\tilde{\sigma} = \min\{\sigma_1, \frac{\eta}{2 L_V L_f \mathcal{I}},  \frac{\eta}{L_V \mathcal{I}} \}$. From Eq.~\eqref{eq:boundingdiff} and Eq.~\eqref{eq:boundinghjb}, we deduce that 
\begin{align}
V_{1 \tilde{\sigma}}(0,x_0) \geq V_1(0,x_0) - \eta \geq V^*(0,x_0) - \frac{\epsilon}{2} - \eta  & \label{eq:boundobj}\\
V_{1 \tilde{\sigma}}(t,x) \leq V_{1}(t,x) + \eta \leq \eta&, \quad \forall (t,x) \in [0,T] \times K  \label{eq:proofinal1}\\
\partial_t V_{1\tilde{\sigma}} (t,x) + 1 +  \nabla_x V_{1\tilde{\sigma}}(t,x)^\top f(t,x,u) \geq -\eta &, \quad \forall (t,x,u) \in [0,T] \times X \times U. \label{eq:proofinal2}
\end{align}
From Lemma~\ref{lem:feas}, we deduce that $V(t,x) =  V_{1 \tilde{\sigma}}(t,x) + \eta(t-1-T) \in C^\infty(\mathbb{R}^{n+1})$ is feasible in \eqref{eq:lpc1}. From Eq.~\eqref{eq:boundobj}, we deduce that $V(0,x_0) \geq V^*(0,x_0) - \frac{\epsilon}{2} - \eta - (1+T)\eta$, and by definition of $\eta$, $V(0,x_0) \geq V^*(0,x_0) - \epsilon$. 
\end{proof}
The next theorem underlies the convergence proof of the hierarchy of semi-infinite problems in Sect.~\ref{sec:convexsipsubsolutions}: if we restrict to polynomials HJB subsolutions, the value of the problem \eqref{eq:lpc1} remains unchanged.
\begin{theorem}
Under Assumption~\ref{as:convexF} and Assumption~\ref{as:finite}, and for $\mathcal{F} = \mathbb{R}[t,x_1, \dots, x_n]$, the value of the LP formulation \eqref{eq:lpc1} equals $V^*(0,x_0)$.  
\label{th:polyvf}
\end{theorem}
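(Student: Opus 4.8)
The plan is to mirror the proof of Theorem~\ref{th:smoothvf}, simply replacing its mollification step (which upgrades a $C^1$ subsolution to a $C^\infty$ one) by a polynomial approximation step (which downgrades a $C^\infty$ subsolution to a polynomial one). First I would fix $\epsilon > 0$ and apply Theorem~\ref{th:smoothvf} to obtain $V_\infty \in C^\infty(\mathbb{R}^{n+1})$ that is feasible in \eqref{eq:lpc1} and satisfies $V_\infty(0,x_0) \geq V^*(0,x_0) - \frac{\epsilon}{2}$. The reverse inequality is free: any polynomial is a $C^1$ function, so Lemma~\ref{lem:subsol} applies to every feasible $V \in \mathbb{R}[t,x_1,\dots,x_n]$ and gives $V(0,x_0) \leq V^*(0,x_0)$; hence the value of \eqref{eq:lpc1} with $\mathcal{F} = \mathbb{R}[t,x_1,\dots,x_n]$ is at most $V^*(0,x_0)$, and it suffices to prove the lower bound.

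The core of the argument is to produce a polynomial $p$ that approximates $V_\infty$ \emph{together with its first-order partial derivatives}, uniformly on a box $B \supset [0,T] \times X$ (such a box exists since $X$ is compact). This relies on the $C^1$ version of the Weierstrass approximation theorem: for $V_\infty \in C^\infty$ restricted to $B$ and any $\delta > 0$, there is a polynomial $p$ with $\sup_B |V_\infty - p| \leq \delta$, $\sup_B |\partial_t V_\infty - \partial_t p| \leq \delta$, and $\sup_B \lVert \nabla_x V_\infty - \nabla_x p \rVert_2 \leq \delta$; concretely, this follows from the fact that the multivariate Bernstein polynomials of a $C^1$ function converge to it together with their first derivatives. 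Writing $M_f = \sup_{(t,x,u) \in [0,T] \times X \times U} \lVert f(t,x,u) \rVert_2 < \infty$ and using the feasibility of $V_\infty$, I then get, for all $(t,x,u) \in [0,T] \times X \times U$,
\[
\partial_t p(t,x) + 1 + \nabla_x p(t,x)^\top f(t,x,u) \geq \big(\partial_t V_\infty + 1 + \nabla_x V_\infty^\top f\big) - \delta - \delta M_f \geq -\delta(1 + M_f),
\]
while $p(t,x) \leq V_\infty(t,x) + \delta \leq \delta$ on $[0,T] \times K$. Thus $p$ satisfies Eqs.~\eqref{eq:subsol1}-\eqref{eq:subsol2} with feasibility error at most $\eta := \delta(1 + M_f)$.

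Finally I would restore exact feasibility via Lemma~\ref{lem:feas}: the function $V := p + \eta(t - 1 - T)$ is again a polynomial, is feasible in \eqref{eq:lpc1}, and satisfies $V(0,x_0) = p(0,x_0) - (1+T)\eta \geq V_\infty(0,x_0) - \delta - (1+T)(1+M_f)\delta \geq V^*(0,x_0) - \frac{\epsilon}{2} - \big(1 + (1+T)(1+M_f)\big)\delta$. Choosing $\delta$ small enough that $\big(1 + (1+T)(1+M_f)\big)\delta \leq \frac{\epsilon}{2}$ yields a feasible polynomial with $V(0,x_0) \geq V^*(0,x_0) - \epsilon$. As $\epsilon > 0$ is arbitrary, the value of \eqref{eq:lpc1} over polynomials is at least $V^*(0,x_0)$, and with the reverse inequality it equals $V^*(0,x_0)$. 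The only genuinely delicate step is the joint uniform approximation of $V_\infty$ and its gradient by a single polynomial on a possibly irregular compact set; I would resolve this by working on the enclosing box $B$ and invoking simultaneous $C^1$ convergence of Bernstein polynomials, since the rest of the bookkeeping is identical in spirit to Theorem~\ref{th:smoothvf}.
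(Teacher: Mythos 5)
Your proposal is correct and follows essentially the same route as the paper: obtain a $C^\infty$ subsolution from Theorem~\ref{th:smoothvf}, approximate it together with its first-order partial derivatives by a single polynomial, bound the resulting HJB feasibility error via Cauchy--Schwarz and a uniform bound on $f$, and restore exact feasibility with Lemma~\ref{lem:feas}. The only difference is the approximation tool: where you invoke simultaneous $C^1$ convergence of multivariate Bernstein polynomials on an enclosing box (a valid, classical fact), the paper uses its Lemma~\ref{lem:polapprox}, which additionally provides the $O(1/d)$ approximation rate that is reused later in Theorem~\ref{th:convergenceC2hjb} but is not needed for the present statement.
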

\begin{proof}
We consider $\mathcal{F} = \mathbb{R}[t,x_1, \dots, x_n]$. For a given $\epsilon > 0$, and we will prove that there exists $V \in \mathbb{R}[t,x_1, \dots, x_n]$ that is feasible in \eqref{eq:lpc1} and such that $V(0,x_0) \geq V^*(0,x_0) - \epsilon$. According to Th.~\ref{th:smoothvf}, there exists a function $Q \in C^\infty(\mathbb{R}^{n+1})$ which is a subsolution of the HJB equation and such that $Q(0,x_0) \geq V^*(0,x_0) - \frac{\epsilon}{2}$. We notice that $Q$ has a locally Lipschitz gradient. Therefore, we can apply Lemma~\ref{lem:polapprox}. This yields, in particular, that for any $\nu > 0$, there exists a polynomial $w \in \mathbb{R}[t,x_1, \dots, x_n]$ such that  for all $(t,x) \in [0,T] \times X$, $| w (y) -  w (y) | \leq \nu$ and $|\partial_i w (t,x) -  \partial_i V (t,x) | \leq \nu, \: i \in \{t,x_1, \dots, x_N \}$. We deduce that $|\partial_t Q (t,x) + \nabla_x Q(t,x)^\top f(t,x,u) - \partial_t w (t,x) + \nabla_x w(t,x)^\top f(t,x,u) | \leq |\partial_t Q (t,x) - \partial_t w (t,x) | + \sum_{i=1}^n  |\partial_{x_i} Q (t,x) - \partial_{x_i} w (t,x) | M_i \leq \nu (1 + \sum_{i=1}^n M_i)$, where $M_i = \max_{(t,x,u) \in [0,T] \times X \times U} |f_i(t,x,u)|$. Therefore, we observe that for all $(t,x,u) \in [0,T] \times X \times U$,
\begin{align}
\partial_t w (t,x) + 1 + \nabla_x w (t,x)^\top f(t,x,u) & \geq  \partial_t Q (t,x) + 1 + \nabla_x Q (t,x)^\top f(t,x,u) - \nu (1 + \sum_{i=1}^n M_i) \\
& \geq - \nu (1 + \sum_{i=1}^n M_i),
\label{eq:hjbpol}
\end{align}
as $Q$ is a subsolution of the HJB equation. In summary, for $\nu = \eta(1 + \sum_{i=1}^n M_i)^{-1} \leq \eta$,
\begin{align}
w(0,x_0) \geq Q(0,x_0) - \eta \geq V^*(0,x_0) - \frac{\epsilon}{2} - \eta \label{eq:firstpoly}\\
w(t,x) \leq Q(t,x) + \eta \leq \eta &, \quad \forall (t,x) \in [0,T] \times K \label{eq:secpoly} \\
\partial_t w (t,x) + 1 + \nabla_x w (t,x)^\top f(t,x,u) \geq -\eta&, \quad \forall (t,x,u) \in [0,T] \times X \times U \label{eq:lastpoly},
\end{align}
the last inequality following from Eq.~\eqref{eq:hjbpol}. Based on Eqs.~\eqref{eq:secpoly}-\eqref{eq:lastpoly} and Lemma~\ref{lem:feas}, we notice that the polynomial $V(t,x) =  w(t,x) + \eta(t-T-1)\in \mathbb{R}[t,x_1, \dots, x_n]$ is feasible in the problem \eqref{eq:lpc1}. Having defined $\eta = \frac{\epsilon}{2(T+2)}$, we see, based on Eq.~\eqref{eq:firstpoly},  that it satisfies $V(0,x_0) \geq  V^*(0,x_0) - \frac{\epsilon}{2} - \eta - \eta(T+1) = V^*(0,x_0) - \epsilon$.
\end{proof}

\section{Convex semi-infinite programming to compute near-optimal subsolutions}
\label{sec:convexsipsubsolutions}
For $\mathcal{F}$ being either $C^1(\mathbb{R}^{n+1})$, $C^\infty(\mathbb{R}^{n+1})$, or $\mathbb{R}[t,x_1,\dots, x_n] \}$, the linear program \eqref{eq:lpc1} is infinite dimensional, and thus, not tractable as it stands. Therefore, we next present a hierarchy of convex SIP problems that are solvable with a dedicated algorithm, to compute subsolutions to the HJB equation that are near optimal in the problem \eqref{eq:lpc1}.

\subsection{A hierarchy of linear semi-infinite programs}
Instead of having an optimization space $\mathcal{F}$ that is infinite dimensional, we suggest to restrict to the finite dimensional subspaces $\mathbb{R}_d[t,x_1,\dots, x_n]$ of polynomials of degree bounded by $d$. This restricted dual problem is:
\begin{align}
\begin{array}{rll}
\underset{V \in \mathbb{R}_d[t,x_1,\dots, x_n]} \sup & V(0,x_0) & \\
\text{s.t.} & \partial_t V (t,x) + 1 +  \nabla_x V(t,x)^\top f(t,x,u) \geq 0 & \forall (t,x,u) \in [0,T] \times X \times U \\
& V(t,x) \leq 0 &  \forall (t,x) \in [0,T] \times K.
\end{array}
\tag{\mbox{$R_d$}}
\label{eq:lprd}
\end{align}
In the rest of the paper, we will denote by $N$ the dimension of the vector space $\mathbb{R}_d[t,x_1,\dots, x_n]$ and  $\Phi(t,x) \in \mathbb{R}^{N}$ a basis of this space. For both objects, there is indeed a dependence of $d$, that is implicit here for readability reasons.  For any $V \in \mathbb{R}_d[t,x_1,\dots, x_n]$, we introduce the vector $\theta$ of the coordinates of $V$ in the basis $\Phi$. Hence, we have the relation
\begin{align}
V(t,x) = \theta^\top \Phi(t,x) \in \mathbb{R}_d[t,x_1,\dots, x_n].
\end{align}
Expressing problem~\eqref{eq:lprd} as an optimization problem over the vector of coefficients, it appears clearly that this is a linear semi-infinite program.
\begin{proposition}
For $d \in \mathbb{N}^*$, problem  \eqref{eq:lprd}
 is a linear semi-infinite program, \textit{i.e}, a linear program with a finite number of variables and an infinite number of constraints. More precisely, there exist a vector $c \in \mathbb{R}^N$, and a compact set $\mathcal{Y} \subset \mathbb{R}^{N+1}$ such that \eqref{eq:lprd} reads
\begin{align}
\begin{array}{rll}
\underset{\theta \in \mathbb{R}^N}{\sup} & c^\top \theta &  \\
\text{s.t.} & a^\top \theta + b \leq 0 & \forall (a,b) \in \mathcal{Y}.
\end{array}
\tag{${SIP}$}
\label{eq:sip}
\end{align}
\end{proposition}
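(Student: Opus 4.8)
The plan is to substitute the parametrization $V(t,x) = \theta^\top \Phi(t,x)$ into the objective and into the two constraint families of \eqref{eq:lprd}, and then to read off the vector $c$ and the constraint-generating pairs $(a,b)$ directly. Since $V(0,x_0) = \theta^\top \Phi(0,x_0)$, the choice $c = \Phi(0,x_0) \in \mathbb{R}^N$ turns the objective $V(0,x_0)$ into the linear form $c^\top \theta$, as required.

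Next I would linearize the two constraint families in $\theta$. For the HJB constraint, because differentiation is linear and $\Phi$ is a vector of polynomials, one has $\partial_t V(t,x) = \theta^\top \partial_t \Phi(t,x)$ and $\partial_{x_i} V(t,x) = \theta^\top \partial_{x_i} \Phi(t,x)$, so the inequality $\partial_t V + 1 + \nabla_x V^\top f \geq 0$ becomes $\theta^\top\bigl(\partial_t \Phi(t,x) + \sum_{i=1}^n \partial_{x_i}\Phi(t,x)\, f_i(t,x,u)\bigr) + 1 \geq 0$. Negating, this is exactly $a^\top \theta + b \leq 0$ with $a = -\bigl(\partial_t \Phi(t,x) + \sum_{i=1}^n \partial_{x_i}\Phi(t,x)\, f_i(t,x,u)\bigr)$ and $b = -1$, the pair being indexed by $(t,x,u) \in [0,T]\times X \times U$. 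For the terminal constraint, $V(t,x) \leq 0$ reads $\Phi(t,x)^\top \theta \leq 0$, that is $a^\top \theta + b \leq 0$ with $a = \Phi(t,x)$ and $b = 0$, indexed by $(t,x) \in [0,T] \times K$.

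Finally I would define $\mathcal{Y} \subset \mathbb{R}^{N+1}$ as the union of the two sets of pairs $(a,b)$ produced above, and argue that it is compact. Each of these sets is the image of a compact domain (respectively $[0,T]\times X \times U$ and $[0,T]\times K$) under a continuous map: the entries of $\Phi$ and of its partial derivatives $\partial_t\Phi, \partial_{x_i}\Phi$ are polynomials, hence continuous, and $f$ is continuous by hypothesis, so both maps $(t,x,u)\mapsto(a,b)$ and $(t,x)\mapsto(a,b)$ are continuous. A continuous image of a compact set is compact, and a finite union of compact sets is compact, so $\mathcal{Y}$ is compact. Since none of these manipulations alters the feasible set or the objective value, problem \eqref{eq:lprd} coincides with \eqref{eq:sip} for this $c$ and this $\mathcal{Y}$.

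I do not anticipate a genuine obstacle here: the statement is essentially a bookkeeping reformulation that exploits the linearity of $V \mapsto \theta$ in the coefficients. The only point requiring care is the compactness of $\mathcal{Y}$, which hinges on the continuity of $f$ and of the polynomial derivatives $\partial_t\Phi, \partial_{x_i}\Phi$, combined with the compactness of the index domains $[0,T]\times X \times U$ and $[0,T]\times K$.
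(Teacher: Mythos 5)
Your proposal is correct and follows essentially the same route as the paper: the same choice $c = \Phi(0,x_0)$, the same two families of pairs $(a,b)$ indexed by $[0,T]\times X\times U$ and $[0,T]\times K$, and $\mathcal{Y}$ as their union. You even spell out the compactness of $\mathcal{Y}$ (continuous image of compact index sets, finite union), which the paper asserts without comment.
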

\begin{proof}
We define the vector $c = \Phi(0,x_0)$, and the compact sets
\begin{align}
& \mathcal{Y}_1 = \{ (-\partial_t\Phi(t,x) - \nabla_x\Phi(t,x)^\top f(t,x,u), -1), (t,x,u) \in [0,T] \times X \times U \} \\
& \mathcal{Y}_2 = \{ (\Phi(t,x),0), (t,x) \in [0,T] \times K \} \\
& \mathcal{Y} =  \mathcal{Y}_1 \cup \mathcal{Y}_2.
\end{align}
We see that for any $V_\theta(t,x) = \theta^\top \Phi(t,x) \in \mathbb{R}_d[t,x_1,\dots, x_n]$, $V_\theta(0,x_0) = c^\top \theta$, and $V_\theta(t,x)$ is feasible in $\eqref{eq:lprd}$ if and only if $a^\top \theta + b \leq 0$, for all $(a,b) \in \mathcal{Y}$.
\end{proof}
We will see in the next section how to efficiently solve those semi-infinite programs. Prior to that, we state the convergence of this hierarchy of semi-infinite programs.
\begin{theorem}
The sequence $\mathsf{val}\eqref{eq:lprd}$ converges to $V^*(0,x_0)$ when $d \rightarrow \infty$.
\end{theorem}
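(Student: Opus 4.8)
The plan is to reduce the claim to Theorem~\ref{th:polyvf}, which already identifies $V^*(0,x_0)$ with the value of \eqref{eq:lpc1} for $\mathcal{F} = \mathbb{R}[t,x_1,\dots,x_n]$, and to show that $\mathsf{val}\eqref{eq:lprd}$ approximates this value monotonically from below. Writing $v_d := \mathsf{val}\eqref{eq:lprd}$, the first step is to note that the restrictions are nested: since $\mathbb{R}_d[t,x_1,\dots,x_n] \subset \mathbb{R}_{d+1}[t,x_1,\dots,x_n]$ and the HJB and terminal constraints in \eqref{eq:lprd} depend only on the polynomial $V$ (not on its nominal degree), every point feasible at degree $d$ is feasible at degree $d+1$ with the same objective value $V(0,x_0)$. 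Hence $(v_d)$ is nondecreasing. Likewise, every feasible point of \eqref{eq:lprd} is a feasible polynomial for the full problem \eqref{eq:lpc1} with $\mathcal{F} = \mathbb{R}[t,x_1,\dots,x_n]$, so $v_d \leq V^*(0,x_0)$ by Theorem~\ref{th:polyvf}. A nondecreasing sequence bounded above converges to some limit $L \leq V^*(0,x_0)$.

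For the reverse inequality, I would exploit the fact that each individual feasible polynomial lives in a finite-dimensional subspace. Fix $\epsilon > 0$; by Theorem~\ref{th:polyvf} there is a polynomial $V$ feasible in \eqref{eq:lpc1} with $V(0,x_0) \geq V^*(0,x_0) - \epsilon$. This $V$ has some finite degree $d_\epsilon$, and since restricting the optimization space to $\mathbb{R}_{d_\epsilon}[t,x_1,\dots,x_n]$ leaves the (infinitely many) constraints unchanged, $V$ is feasible at degree $d_\epsilon$; therefore $v_{d_\epsilon} \geq V(0,x_0) \geq V^*(0,x_0) - \epsilon$. By monotonicity $L \geq v_{d_\epsilon} \geq V^*(0,x_0) - \epsilon$, and letting $\epsilon \to 0$ gives $L = V^*(0,x_0)$.

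I expect no serious obstacle: the statement is a soft consequence of the polynomial density already secured in Theorem~\ref{th:polyvf}. The only point deserving care is the observation --- used in both directions --- that passing between \eqref{eq:lprd} and \eqref{eq:lpc1}, or between consecutive degrees, only enlarges or shrinks the optimization variable while keeping the constraint family identical, so feasibility of any fixed polynomial transfers verbatim. This is what makes both the monotonicity and the boundedness immediate, and it is the crux that replaces any genuine analytic argument.
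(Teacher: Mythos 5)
Your proof is correct and follows essentially the same route as the paper: monotonicity and boundedness of $v_d$ give a limit $\ell \leq V^*(0,x_0)$, and Theorem~\ref{th:polyvf} supplies near-optimal feasible polynomials of finite degree, whose feasibility in \eqref{eq:lprd} at their own degree forces $\ell \geq V^*(0,x_0)$. The only cosmetic difference is that you combine the $\epsilon$-argument with monotonicity directly, whereas the paper extracts a subsequence $(v_{d_k})$ along the degrees of a maximizing sequence; both hinge on exactly the same observations.
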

\begin{proof}
On the one hand, we introduce the notation $v_d = \mathsf{val}\eqref{eq:lprd}$. This sequence is obviously an increasing sequence, bounded above by $V^*(0,x_0)$. Hence, it converges to a value $\ell$, and any subsequence converges to $\ell \leq V^*(0,x_0)$. On the other hand, Th.~\ref{th:polyvf} guarantees that there exists a sequence of polynomials $w_k \in \mathbb{R}[t,x_1,\dots,x_n]$ that are feasible in \eqref{eq:lpc1} and such that $w_k(0,x_0) \rightarrow_k V^*(0,x_0)$. By definition, we have $v_{d_k} \geq w_k(0,x_0)$, where $d_k = \mathsf{deg}(w_k)$. Up to the extraction of a subsequence of $(w_k)$, we can assume that the sequence $d_k$ increasing, therefore $ (v_{d_k})_{k\in\mathbb{N}}$ is a subsequence of $(v_d)_{d\in\mathbb{N}}$. As $v_{d_k} \rightarrow \ell$ and $w_k(0,x_0) \rightarrow_k V^*(0,x_0)$, we deduce that $\ell \geq V^*(0,x_0)$, which yields the equality  $\ell = V^*(0,x_0)$.
\end{proof}

\subsection{Regularization and solution of the semi-infinite programs}
We introduce a quadratic regularization in the semi-infinite program \eqref{eq:sip}, yielding the following formulation depending on $\mu \in \mathbb{R}_{++}$:
\begin{align}
\begin{array}{rll}
\underset{\theta \in \mathbb{R}^N}{\max} & c^\top \theta - \frac{\mu}{2} \lVert \theta \rVert^2 &  \\
\text{s.t.} & a^\top \theta + b \leq 0 & \forall (a,b) \in \mathcal{Y}.
\end{array}
\tag{\mbox{${SIP}_\mu$}}
\label{eq:sipr}
\end{align}

\begin{proposition}
For any $\mu > 0$, the semi-infinite program \eqref{eq:sipr} has a unique optimal solution with value $\mathsf{val}\eqref{eq:sipr} \leq V^*(0,x_0)$. Moreover, $\mathsf{val}\eqref{eq:sipr} \underset{\mu \rightarrow 0}{\rightarrow} \mathsf{val}\eqref{eq:sip}$.  
\end{proposition}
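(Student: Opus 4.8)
The plan is to treat the three claims separately: existence and uniqueness of the maximizer, the upper bound by $V^*(0,x_0)$, and the limit as $\mu \to 0$.

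For existence and uniqueness, I would write $\Theta = \{\theta \in \mathbb{R}^N : a^\top\theta + b \leq 0, \ \forall (a,b)\in\mathcal{Y}\}$ for the common feasible set of \eqref{eq:sip} and \eqref{eq:sipr}. It is closed and convex as an intersection of closed half-spaces, and nonempty because $\theta = 0$ (i.e.\ $V \equiv 0$) is feasible: the constant function $0$ trivially satisfies \eqref{eq:subsol1}--\eqref{eq:subsol2}, equivalently every $b$ appearing in $\mathcal{Y}$ satisfies $b \leq 0$. The objective $J_\mu(\theta) = c^\top\theta - \frac{\mu}{2}\lVert\theta\rVert^2$ is strictly concave for $\mu > 0$ and tends to $-\infty$ as $\lVert\theta\rVert \to \infty$. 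For existence I would confine any maximizing sequence to the superlevel set $\{\theta\in\Theta : J_\mu(\theta) \geq J_\mu(0) = 0\}$; from $\frac{\mu}{2}\lVert\theta\rVert^2 \leq c^\top\theta \leq \lVert c\rVert\,\lVert\theta\rVert$ this set is bounded ($\lVert\theta\rVert \leq 2\lVert c\rVert/\mu$) and closed, hence compact, so the Weierstrass theorem yields a maximizer. Strict concavity of $J_\mu$ over the convex set $\Theta$ then forces uniqueness.

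For the upper bound, the penalty $\frac{\mu}{2}\lVert\theta\rVert^2 \geq 0$ gives $J_\mu(\theta) \leq c^\top\theta$, so $\mathsf{val}\eqref{eq:sipr} \leq \mathsf{val}\eqref{eq:sip}$. By the preceding proposition, $\mathsf{val}\eqref{eq:sip} = \mathsf{val}\eqref{eq:lprd}$, and every $V$ feasible in \eqref{eq:lprd} is an HJB subsolution; Lemma~\ref{lem:subsol} then gives $V(0,x_0) \leq V^*(0,x_0)$, and taking the supremum yields $\mathsf{val}\eqref{eq:lprd} \leq V^*(0,x_0)$. Chaining the two inequalities gives $\mathsf{val}\eqref{eq:sipr} \leq V^*(0,x_0)$.

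For the convergence, I would first note that $\mu \mapsto \mathsf{val}\eqref{eq:sipr}$ is nonincreasing: for $0 < \mu_1 < \mu_2$ and any $\theta \in \Theta$, $c^\top\theta - \frac{\mu_1}{2}\lVert\theta\rVert^2 \geq c^\top\theta - \frac{\mu_2}{2}\lVert\theta\rVert^2$. With the upper bound above, the limit $L := \lim_{\mu\to 0}\mathsf{val}\eqref{eq:sipr}$ exists and satisfies $L \leq \mathsf{val}\eqref{eq:sip}$. The reverse inequality is the heart of the argument: fixing $\epsilon > 0$, I would pick $\bar\theta \in \Theta$ with $c^\top\bar\theta \geq \mathsf{val}\eqref{eq:sip} - \epsilon$ (possible since, by Assumption~\ref{as:finite} and the upper bound, $\mathsf{val}\eqref{eq:sip} \in [0, V^*(0,x_0)]$ is finite). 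For this fixed $\bar\theta$ the penalty $\frac{\mu}{2}\lVert\bar\theta\rVert^2 \to 0$, so for all small enough $\mu$ one has $\mathsf{val}\eqref{eq:sipr} \geq J_\mu(\bar\theta) \geq \mathsf{val}\eqref{eq:sip} - 2\epsilon$; letting $\mu \to 0$ and then $\epsilon \to 0$ gives $L \geq \mathsf{val}\eqref{eq:sip}$, hence equality.

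I expect the main obstacle to be precisely this lower bound in the convergence step, since $\mathsf{val}\eqref{eq:sip}$ need not be attained. The remedy is to avoid passing to the limit inside a possibly unbounded maximizing sequence and instead to fix a single near-optimal feasible point $\bar\theta$, for which the regularization penalty vanishes as $\mu \to 0$.
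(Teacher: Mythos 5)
Your proposal is correct and follows essentially the same route as the paper's proof: the same chain $\mathsf{val}\eqref{eq:sipr} \leq c^\top\theta \leq \mathsf{val}\eqref{eq:sip} \leq V^*(0,x_0)$, the same monotonicity-in-$\mu$ observation, and the same key step of fixing a single $\epsilon$-optimal feasible point so that its quadratic penalty vanishes as $\mu \to 0$. Your only addition is to make the existence claim fully rigorous (nonemptiness via $\theta = 0$, coercivity, Weierstrass), where the paper simply invokes strong concavity over a convex feasible set.
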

\begin{proof}
The feasible set of \eqref{eq:sipr} being convex, and the objective function being strongly concave, this optimization problem admits a unique maximum $\theta$. By definition, $\mathsf{val}\eqref{eq:sipr} = c^\top \theta - \frac{\mu}{2} \lVert \theta \rVert^2 \leq c^\top \theta \leq \mathsf{val}\eqref{eq:sip}$, since $\theta$ is also feasible in the maximization problem \eqref{eq:sip}. Additionally, $\mathsf{val}\eqref{eq:sip} \leq V^*(0,x_0)$, since any function $V$ feasible in $\eqref{eq:lprd}$ satisfies $V(0,x_0) \leq V^*(0,x_0)$. We also notice that the function $\mu \mapsto \mathsf{val}\eqref{eq:sipr}$ is decreasing, so it admits a limit $\ell$ at $0^+$, due to the aforementioned inequalities, $\ell \leq \mathsf{val}\eqref{eq:sip}$. For any $\mu, \epsilon > 0$, if we take $\theta_\epsilon$ an $\epsilon$-optimal solution in the problem \eqref{eq:sip}, we see that $\mathsf{val}\eqref{eq:sip} - \epsilon - \frac{\mu}{2} \lVert \theta_\epsilon \rVert^2 \leq  c^\top \theta_\epsilon - \frac{\mu}{2} \lVert \theta_\epsilon \rVert^2 \leq \mathsf{val}\eqref{eq:sipr}$. For a fixed $\epsilon$, and taking $\mu \rightarrow 0^+$, we obtain $\mathsf{val}\eqref{eq:sip} - \epsilon \leq \ell$. This being true for any $\epsilon > 0$, we deduce that $\mathsf{val}\eqref{eq:sip}\leq \ell$, which proves the equality.
\end{proof}
Setting the regularization parameter $\mu$ in practice implies a trade-off between the computational tractability of the semi-infinite program \eqref{eq:sipr} and the accuracy of the approximation of the original problem \eqref{eq:sip}. To solve the formulation \eqref{eq:sipr}, we propose to use a standard algorithm for convex semi-infinite programming, called cutting-plane (CP) algorithm \cite{cerulli_convergent_2022}. To that extent, we assume to have an \textit{separation oracle} computing, for any $\theta \in \mathbb{R}^N$,
\begin{align}
\phi(\theta) = \max_{(a,b) \in \mathcal{Y}} a^\top \theta + b,
\label{eq:oracle}
\end{align}
and an associate argmaximum. Solving the optimization problem in Eq.~\eqref{eq:oracle} may be computationally intensive, since the compact set $\mathcal{Y}$ may not be convex. Therefore, we only assume to have an oracle with relative optimality gap $\delta \in [0,1)$ computing $(a,b) \in \mathcal{Y}$, such that $\phi(\theta)- (a^\top \theta + b)  \leq \delta |\phi(\theta)|$. We treat this oracle as a black box, regardless of its implementation, via global optimization, gridding, interval arithmetics or sampling for instance.
\begin{algorithm}[H]
\caption{Cutting-plane algorithm for \eqref{eq:sipr}}
\label{chap:alg:CP}
\hspace*{\algorithmicindent}\textbf{{Input:}} {An oracle with parameter $\delta \in [0,1)$, a tolerance $\epsilon \in \mathbb{R}_+$, a finite set $\mathcal{Y}^0 \subset \mathcal{Y}$, $k\gets0$}
\begin{algorithmic}[1]
    \setcounter{ALG@line}{-1}
    \While{true}
    	\State Compute $\theta^k$, the solution of the convex Quadratic Programming problem
    	\begin{align}
\begin{array}{rll}
\underset{\theta \in \mathbb{R}^N}{\max} & c^\top \theta - \frac{\mu}{2} \lVert \theta \rVert^2 &  \\
\text{s.t.} & a^\top \theta + b \leq 0 & \forall (a,b) \in \mathcal{Y}^k.
\end{array} \label{eq:master}
\end{align}
    
	    \State Call the oracle to compute $(a^k,b^k)$ an approximate solution of \eqref{eq:oracle} with relative optimality gap $\delta$.
        \label{step:y^k}
	    \If {$(a^k)^\top \theta^k + b^k \leq \epsilon$} 
	        \State  Return $\theta^k$.    \label{stepterm}
	    \Else
	    	\State{$\mathcal{Y}^{k+1} \gets \mathcal{Y}^k \cup \{ (a^k,b^k)\} $}
	        \State {$k \gets k + 1$}
	    \EndIf
    \EndWhile
\end{algorithmic}
\end{algorithm}
Before stating the termination and the convergence of Algorithm~\ref{chap:alg:CP}, we introduce the vector $\hat{\theta} \in \mathbb{R}^N$ of coordinates of the polynomial $\hat{v}(t,x) = t - 1 - T$ in the basis $\Phi(t,x)$, and we notice that this element helps obtaining feasible solutions since $\phi(\hat{\theta}) = -1$: Due to Lemma~\ref{lem:feas}, we observe that if $\theta$ has a feasibility error less or equal than $\eta \geq 0$ in \eqref{eq:sip}  and \eqref{eq:sipr} then, $\theta + \eta \hat{\theta}$ is feasible in \eqref{eq:sip} and \eqref{eq:sipr}. For any $\mu > 0$, we define the convex and compact set $\mathcal{X}_\mu = \{ \theta \in \mathbb{R}^N \colon c^\top \theta - \frac{\mu}{2} \lVert \theta \rVert^2 \geq c^\top \hat{\theta} - \frac{\mu}{2} \lVert \hat{\theta} \rVert^2 \}$, and we define $R_\mu = \sup_{\theta \in \mathcal{X}_\mu } \lVert \theta \rVert$. Finally, we define the function $r_\mu(e) = e (1+T + \mu R_\mu^2 (1+\frac{e}{2}))$. Note that $r_\mu(e) \underset{e \rightarrow 0}{\rightarrow} 0$.
\begin{theorem}
If $\epsilon > 0$, Algorithm~\ref{chap:alg:CP} stops after a finite number $K$ of iterations, and $\theta^K + \frac{\epsilon}{1-\delta} \hat{\theta}$ is a feasible and  $r_\mu(\frac{\epsilon}{1-\delta})$-optimal in \eqref{eq:sipr}. If $\epsilon = 0$, the alternative holds: (a) Algorithm~\ref{chap:alg:CP} either stops after a finite number of iterations, and the last iterate is the optimal solution of \eqref{eq:sipr}, (b) Or it generates an infinite sequence, and the optimality gap and the feasibility error converge towards zero with an asymptotic rate in $O(\frac{1}{k})$. 
\end{theorem}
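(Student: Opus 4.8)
The plan is to handle the two regimes $\epsilon > 0$ and $\epsilon = 0$ separately, relying throughout on two facts that I would establish once and for all. First, since $\phi(\hat\theta) = -1 < 0$, the point $\hat\theta$ is strictly feasible for \eqref{eq:sipr} and hence for every relaxed master problem \eqref{eq:master}; as $\theta^k$ maximizes the strongly concave objective $g(\theta) = c^\top\theta - \frac{\mu}{2}\lVert\theta\rVert^2$ over the master feasible set, we get $g(\theta^k) \ge g(\hat\theta)$, so $\theta^k \in \mathcal{X}_\mu$ and $\lVert\theta^k\rVert \le R_\mu$. Second, compactness of $\mathcal{Y}$ yields $M_a := \sup_{(a,b)\in\mathcal{Y}}\lVert a\rVert < \infty$. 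To prove termination when $\epsilon > 0$, I would argue by contradiction: if the loop never stops then each returned cut obeys $(a^k)^\top\theta^k + b^k > \epsilon$, while every later iterate $\theta^j$ ($j > k$) satisfies that same enforced cut, $(a^k)^\top\theta^j + b^k \le 0$; subtracting and applying Cauchy--Schwarz gives $\lVert\theta^k - \theta^j\rVert > \epsilon/M_a$. The iterates would then form an infinite $\frac{\epsilon}{M_a}$-separated subset of the totally bounded set $\mathcal{X}_\mu$, which is impossible, so the algorithm halts at some finite $K$.

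For the accuracy guarantee, set $e = \frac{\epsilon}{1-\delta}$. At termination $(a^K)^\top\theta^K + b^K \le \epsilon$; combining this with the oracle's relative-gap bound $\phi(\theta^K) - ((a^K)^\top\theta^K + b^K) \le \delta|\phi(\theta^K)|$ and splitting on the sign of $\phi(\theta^K)$ gives $\phi(\theta^K) \le e$ in every case, i.e. $\theta^K$ has feasibility error at most $e$. The remark preceding the theorem, a restatement of Lemma~\ref{lem:feas}, then makes $\theta^K + e\hat\theta$ feasible in \eqref{eq:sipr}. Since the master is a relaxation, $g(\theta^K) \ge \mathsf{val}\eqref{eq:sipr}$, so the optimality gap of the feasible point $\theta^K + e\hat\theta$ is at most $g(\theta^K) - g(\theta^K + e\hat\theta)$. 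Expanding the quadratic and using $c^\top\hat\theta = \hat{v}(0,x_0) = -(1+T)$ gives
\[
g(\theta^K) - g(\theta^K + e\hat\theta) = e(1+T) + \mu e\,(\theta^K)^\top\hat\theta + \tfrac{\mu e^2}{2}\lVert\hat\theta\rVert^2,
\]
and bounding $(\theta^K)^\top\hat\theta \le R_\mu^2$ and $\lVert\hat\theta\rVert^2 \le R_\mu^2$ (both points lie in $\mathcal{X}_\mu$) returns exactly $r_\mu(e)$.

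For $\epsilon = 0$, if the algorithm stops at a finite $K$ then $(a^K)^\top\theta^K + b^K \le 0$ forces $\phi(\theta^K) \le 0$ by the same relative-gap case split, so $\theta^K$ is feasible; being optimal for a relaxation it satisfies $g(\theta^K) \ge \mathsf{val}\eqref{eq:sipr}$, and feasibility gives the reverse inequality, so $\theta^K$ is optimal, which is alternative (a). Otherwise the algorithm generates an infinite sequence, and I would invoke the convergence theory for regularized cutting-plane methods \cite{cerulli_convergent_2022}: the hypotheses it requires, namely strong concavity of the objective, compactness of the iterate region $\mathcal{X}_\mu$, and uniform boundedness of the cut coefficients by $M_a$, are precisely the facts verified above, and they deliver the $O(\frac{1}{k})$ decay of both the optimality gap and the feasibility error.

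The hard part is exactly this rate in alternative (b). The termination, feasibility, and $r_\mu$-optimality statements are essentially elementary once the compactness of $\mathcal{X}_\mu$ and the finiteness of $M_a$ are in place; the quantitative $O(\frac{1}{k})$ bound, by contrast, is the substantive analytic content, and here one must either reproduce the strongly-convex cutting-plane estimate in full or carefully adapt the cited result so that it tolerates the inexact, relative-gap oracle (parameter $\delta$) rather than an exact separation oracle.
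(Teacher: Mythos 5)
Your proposal is correct, and it partially diverges from the paper's route in an interesting way. The paper delegates \emph{both} the finite termination for $\epsilon>0$ and the $\epsilon=0$ alternative to the cited results \cite[Th.1.1-1.2]{oustry_global_2023} (an extension of \cite{cerulli_convergent_2022}), after checking their hypotheses: it restricts \eqref{eq:sipr} to the compact set $\mathcal{X}_\mu$ (which does not alter the algorithm's execution), notes $\mu$-strong concavity, and notes that $\hat\theta$ is strictly feasible. You instead prove termination for $\epsilon>0$ directly and elementarily: non-termination forces $(a^k)^\top\theta^k+b^k>\epsilon$ while later iterates obey the enforced cut $(a^k)^\top\theta^j+b^k\le 0$, so Cauchy--Schwarz with $M_a=\sup_{(a,b)\in\mathcal{Y}}\lVert a\rVert<\infty$ makes the iterates an infinite $\epsilon/M_a$-separated subset of the bounded set $\mathcal{X}_\mu$, a contradiction; and you likewise handle alternative (a) by hand (relaxation value $\ge$ optimum, feasibility forced by the $\delta$-oracle case split). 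These self-contained arguments are a genuine gain in transparency over the paper's pure citation. Your treatment of the accuracy guarantee at termination is essentially identical to the paper's: the $\delta$-oracle case split gives $\phi(\theta^K)\le\frac{\epsilon}{1-\delta}$, Lemma~\ref{lem:feas} gives feasibility of $\theta^K+\frac{\epsilon}{1-\delta}\hat\theta$, and your expansion of the quadratic gap, using $c^\top\hat\theta=-(1+T)$ and $\lVert\theta^K\rVert,\lVert\hat\theta\rVert\le R_\mu$, reproduces exactly $r_\mu(\frac{\epsilon}{1-\delta})$. The one place where you and the paper stand on equal footing is the $O(\frac1k)$ rate in alternative (b): both of you invoke the cutting-plane literature, but note that the paper cites \cite{oustry_global_2023} precisely because the plain result of \cite{cerulli_convergent_2022} must be extended to tolerate the relative-gap oracle with parameter $\delta$ and the compactness device; you correctly flag this adaptation as the unresolved substantive step, so your reliance on \cite{cerulli_convergent_2022} alone should be upgraded to the extended result (or the estimate reproved) for the argument to be complete at the paper's level of rigor.
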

\begin{proof}
First of all, we notice that during the execution of Algorithm~\ref{chap:alg:CP}, we necessarily have $\theta^k \in \mathcal{X}_\mu$, since $\hat{\theta}$ is a feasible solution in \eqref{eq:master} with value $c^\top \hat{\theta} - \frac{\mu}{2} \lVert \hat{\theta} \rVert^2$, therefore by optimality of $\theta^k$ in \eqref{eq:master}, $c^\top \theta^k - \frac{\mu}{2} \lVert \theta^k \rVert^2 \geq c^\top \hat{\theta} - \frac{\mu}{2} \lVert \hat{\theta} \rVert^2$. The finite convergence of Algorithm~\ref{chap:alg:CP} if $\epsilon >0$, and the convergence rate in the case $\epsilon = 0$ (if no finite convergence) follows from \cite[Th.1.1-1.2]{oustry_global_2023} (which is an extension of \cite{cerulli_convergent_2022}) : we apply these theorems to the problem \eqref{eq:sipr} with the additional constraint $\theta \in \mathcal{X}_\mu$. As previously explained, this additional constraint does not change the execution of the algorithm, but it enables us to satisfy the compactness assumption of \cite[Th.1.1-1.2]{oustry_global_2023}. We also note that the objective function is $\mu$-strongly concave, and that $\hat{\theta} \in \mathcal{X}_\mu$ is a strictly feasible point with respect to the semi-infinite constraints.

We finish the proof by showing that if Algorithm~\ref{chap:alg:CP} stops at iteration $K$, then  $\tilde{\theta} = \theta^K + \frac{\epsilon}{1-\delta} \hat{\theta}$ is feasible and  $r_\mu(\frac{\epsilon}{1-\delta})$-optimal in \eqref{eq:sipr}.  If Algorithm~\ref{chap:alg:CP} stops at iteration $K$, this means that $(a^K)^\top \theta^K + b^K \leq \epsilon$. If $\phi(\theta^K)\leq 0$, then $\theta^K$ is feasible in \eqref{eq:sipr}, and so is $\tilde{\theta}$ due to Lemma~\ref{lem:feas}. If $\phi(\theta^K) > 0$, then by property of the $\delta$-oracle, 
$(1-\delta) \phi(\theta^K) \leq (a^K)^\top \theta^K + b^K \leq \epsilon$, and we deduce that the feasibility error is $\phi(\theta^K) \leq \frac{\epsilon}{1-\delta}$. With Lemma~\ref{lem:feas}, we deduce that $\tilde{\theta}$ is feasible in \eqref{eq:sipr}. We also note that 
\begin{align}
c^\top \tilde{\theta} - \frac{\mu}{2} \lVert \tilde{\theta} \rVert^2  &=  c^\top \theta^K + \frac{\epsilon}{1-\delta} c^\top \hat{\theta}  - \frac{\mu}{2} \lVert \theta^K + \frac{\epsilon}{1-\delta} \hat{\theta} \rVert^2 \\
& \geq c^\top \theta^K - \frac{\epsilon}{1-\delta} (1+T)  - \frac{\mu}{2} \left( \lVert \theta^K \rVert^2  + \frac{2 \epsilon}{1-\delta} \lVert \theta^K \rVert \: \lVert\hat{\theta} \rVert + \frac{\epsilon^2}{(1-\delta)^2} \lVert \hat{\theta} \rVert^2  \right),
\end{align}
since $c^\top \hat{\theta} = V_{\hat{\theta}}(0,x_0) = -(1+T)$, and due to the Cauchy-Schwartz inequality. By optimality of $\theta^K$ in \eqref{eq:master}, which is a relaxation of \eqref{eq:sipr}, we know that $\mathsf{val}\text{\eqref{eq:sipr}} \leq  c^\top \theta^K - \frac{\mu}{2} \lVert \theta^K \rVert^2$. Applying this, we deduce that
\begin{align}
c^\top \tilde{\theta} - \frac{\mu}{2} \lVert \tilde{\theta} \rVert^2  
& \geq \mathsf{val}\text{\eqref{eq:sipr}}- \frac{\epsilon}{1-\delta} (1+T)  - \frac{\mu}{2} \left( \frac{2 \epsilon}{1-\delta} \lVert \theta^K \rVert \: \lVert\hat{\theta} \rVert + \frac{\epsilon^2}{(1-\delta)^2} \lVert \hat{\theta} \rVert^2  \right) \\
& \geq \mathsf{val}\text{\eqref{eq:sipr}}- \frac{\epsilon}{1-\delta} (1+T)  - \mu R_\mu^2 \left( \frac{ \epsilon}{1-\delta}  + \frac{\epsilon^2}{2(1-\delta)^2}  \right) \\
& \geq \mathsf{val}\text{\eqref{eq:sipr}}- r_\mu(\frac{\epsilon}{1-\delta}),
\end{align}
the second inequality following from the fact that $\lVert \hat{\theta} \rVert \leq R_\mu$ and $\lVert \theta^K \rVert \leq R_\mu$, as $\hat{\theta}, \theta^K \in \mathcal{X}_\mu$.
\end{proof}

\section{Feedback control based on approximate value functions}
\label{sec:control}
In the previous section, we have seen how to compute subsolutions of the HJB equation based on convex semi-infinite programming, and how to deduce a lower bound on the minimal travel time. In this section, we focus on how subsolutions of the HJB equation, which approximate the value function $V^*$, enable one to recover a near-optimal control for the minimal time control problem \eqref{eq:system}-\eqref{eq:inf}. 

\subsection{Controller design and existence of trajectories}
For a given continuously differentiable function $V \in C^1(\mathbb{R}^{n+1})$, we define the set-valued maps 
\begin{align}
\mathcal{U}_V(t,x) &=  \underset{u \in U}{\mathsf{argmin}} \:   \nabla_x V (t,x)^\top  f(t,x,u) \label{eq:selection} \\
\mathcal{I}_V(t,x) &= \{ u \in \mathcal{U}_V(t,x) \colon f(t,x,u) \in T_X(x) \},
\end{align}
where $T_X(x)$ is the contingent cone to $X$ at point $x$ (see Introduction). In line with previous works designing feedback controllers based on approximate value functions \cite{henrion_nonlinear_2008,jones_polynomial_2023}, we are interested in the trajectories satisfying the following differential inclusion depending on the function $V \in C^1(\mathbb{R}^{n+1})$:
\begin{align}
\dot{x}_V(t) = f(t,x_V(t),u_V(t))  \text{ with } u_V(t) \in \mathcal{U}_V(t,x_V(t)). \label{eq:diffinclusion}
\tag{\mbox{$CL_V$}}
\end{align}
Intuitively, such a feedback control pushes the system towards the descent direction of the function $V$. The following proposition confirms that, should the function $V \in C^1(\mathbb{R}^{n+1})$ be optimal in problem \eqref{eq:lpc1}, then any minimal time trajectory satisfies the differential inclusion  \eqref{eq:diffinclusion} with respect to $V$. 
\begin{proposition} Under Assumptions~\ref{as:convexF}-\ref{as:finite}, we consider an optimal trajectory $(x^*(\cdot),u^*(\cdot))$ of the minimal time control problem \eqref{eq:system}-\eqref{eq:inf} starting from $(0,x_0)$, with hitting time $\tau^* = V^*(0,x_0)$. If the linear program \eqref{eq:lpc1}, for $\mathcal{F} = C^1(\mathbb{R}^{n+1})$, admits an optimal solution $V$, then, for almost every $t \in [0, \tau^*],$
\begin{align}
u^*(t) \in \mathcal{I}_V(t,x^*(t)) \subset \mathcal{U}_{V}(t,x^*(t)). 
\label{eq:optcommand}
\end{align}
In particular, the trajectory $(x^*(\cdot),u^*(\cdot))$ satisfies the differential inclusion \eqref{eq:diffinclusion}.
\label{prop:opttraj}
\end{proposition}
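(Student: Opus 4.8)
The plan is to show that along the optimal trajectory, the optimal value function $V$ satisfies the HJB equation with equality (not just the subsolution inequality), and that this forces $u^*(t)$ to achieve the minimum in $\mathcal{U}_V$ and to respect the state-constraint condition defining $\mathcal{I}_V$. The starting point is the observation, already used in the proof of Lemma~\ref{lem:subsol}, that for the optimal trajectory $(x^*(\cdot),u^*(\cdot))$ one has $\frac{d}{dt}[V(t,x^*(t))] = \partial_t V(t,x^*(t)) + \nabla_x V(t,x^*(t))^\top f(t,x^*(t),u^*(t)) \geq -1$ almost everywhere on $[0,\tau^*]$. Integrating this gives $V(\tau^*,x^*(\tau^*)) - V(0,x_0) \geq -\tau^*$. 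Since $V$ is \emph{optimal} in \eqref{eq:lpc1}, Theorem~\ref{th:duality} gives $V(0,x_0) = V^*(0,x_0) = \tau^*$; combining with $x^*(\tau^*) \in K$ and the constraint $V \leq 0$ on $[0,T]\times K$ from \eqref{eq:subsol2}, one obtains $0 \geq V(\tau^*,x^*(\tau^*)) \geq V(0,x_0) - \tau^* = 0$. Hence the integrated inequality is in fact an equality, which forces the integrand $\frac{d}{dt}[V(t,x^*(t))]$ to equal $-1$ for almost every $t \in [0,\tau^*]$.

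Next I would exploit this pointwise equality. For a.e.\ $t$, we have $\partial_t V(t,x^*(t)) + \nabla_x V(t,x^*(t))^\top f(t,x^*(t),u^*(t)) = -1$, i.e.\ $\partial_t V(t,x^*(t)) + 1 + \nabla_x V(t,x^*(t))^\top f(t,x^*(t),u^*(t)) = 0$. On the other hand, the subsolution inequality \eqref{eq:subsol1} evaluated at $x^*(t)$ reads $\partial_t V(t,x^*(t)) + \min_{u \in U}\{1 + \nabla_x V(t,x^*(t))^\top f(t,x^*(t),u)\} \geq 0$, which rearranges to $\min_{u\in U} \nabla_x V(t,x^*(t))^\top f(t,x^*(t),u) \geq -1 - \partial_t V(t,x^*(t))$. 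Combining the two displays shows $\nabla_x V(t,x^*(t))^\top f(t,x^*(t),u^*(t)) = -1 - \partial_t V(t,x^*(t)) \leq \min_{u\in U}\nabla_x V(t,x^*(t))^\top f(t,x^*(t),u)$, so $u^*(t)$ attains the minimum, i.e.\ $u^*(t) \in \mathcal{U}_V(t,x^*(t))$ for a.e.\ $t$.

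It remains to upgrade $u^*(t) \in \mathcal{U}_V$ to $u^*(t) \in \mathcal{I}_V$, which requires the tangency condition $f(t,x^*(t),u^*(t)) \in T_X(x^*(t))$. This is the step I expect to be the main obstacle, since it is the only part not flowing directly from the optimality of $V$. The key fact is that $x^*(\cdot)$ is an admissible trajectory, so $x^*(s) \in X$ for all $s$ in the interval, and $\dot{x}^*(t) = f(t,x^*(t),u^*(t))$ a.e. At any time $t$ where $x^*$ is differentiable with $x^*(t)\in X$, the derivative $\dot{x}^*(t)$ is a feasible direction into $X$: taking $t_k \downarrow 0$ and $d_k = (x^*(t+t_k) - x^*(t))/t_k \to \dot x^*(t)$ with $x^*(t) + t_k d_k = x^*(t+t_k) \in X$ exhibits $\dot x^*(t)$ as an element of the contingent cone $T_X(x^*(t))$ by its very definition (given in the Introduction). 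Since almost every $t$ is simultaneously a point of differentiability of $x^*$, a Lebesgue point of $u^*$, and a point where the equality derived above holds, we conclude that for a.e.\ $t$ both $u^*(t) \in \mathcal{U}_V(t,x^*(t))$ and $f(t,x^*(t),u^*(t)) = \dot{x}^*(t) \in T_X(x^*(t))$ hold, which is exactly $u^*(t) \in \mathcal{I}_V(t,x^*(t))$. The final assertion that $(x^*,u^*)$ satisfies \eqref{eq:diffinclusion} is then immediate from $\mathcal{I}_V \subset \mathcal{U}_V$ and $\dot{x}^*(t) = f(t,x^*(t),u^*(t))$.
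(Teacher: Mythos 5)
Your proof is correct, and its first half is essentially the paper's own argument: the paper defines $\alpha(t) = V(t,x^*(t)) + t$, shows it is non-decreasing (from the subsolution inequality) and satisfies $\alpha(\tau^*) \le \alpha(0)$ (from optimality of $V$ via Theorem~\ref{th:duality}, $x^*(\tau^*)\in K$, and the constraint $V\le 0$ on $[0,T]\times K$), hence constant with zero derivative — which is your integration argument phrased through monotonicity; the deduction $u^*(t)\in\mathcal{U}_V(t,x^*(t))$ a.e.\ is then identical. Where you genuinely diverge is the tangency step. The paper invokes its Lemma~\ref{lem:tangentcone}, which embeds the dynamics into a time-invariant Marchaud control system and applies a viability theorem of Aubin to get $f(t,x^*(t),u^*(t)) \in T_X(x^*(t))$ a.e.; you instead prove this directly from the definition of the contingent cone: at any $t<\tau^*$ where $x^*$ is differentiable and $\dot x^*(t)=f(t,x^*(t),u^*(t))$, the forward difference quotients $d_k = (x^*(t+t_k)-x^*(t))/t_k$ converge to $\dot x^*(t)$ and satisfy $x^*(t)+t_k d_k = x^*(t+t_k)\in X$, exhibiting $\dot x^*(t)\in T_X(x^*(t))$. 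This is valid; the only point you should state explicitly is that admissibility gives $x^*(s)\in X$ merely for a.e.\ $s$, and it is continuity of $x^*$ together with closedness of the compact set $X$ that upgrades this to all $s$, which your difference-quotient argument needs. Your route is more elementary and self-contained — it in effect reproves Lemma~\ref{lem:tangentcone} without viability theory — whereas the paper's heavier machinery buys nothing extra here; that machinery is genuinely needed elsewhere in the paper (Theorem~\ref{th:existencetraj}), where one must construct a closed-loop trajectory rather than verify a property of a given one, and reusing the same lemma keeps the two proofs uniform.
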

\begin{proof} We define the function $\alpha(t) = V (t,x^*(t)) + t$, which is differentiable. We have that $\alpha'(t) = \partial_t V (t,x^*(t)) + 1 + \nabla_x V (t,x^*(t))^\top  f(t,x^*(t),u^*(t))$, for almost all $t \in [0,\tau^*]$. Since $V$ is feasible in \eqref{eq:lpc1}, therefore  satisfies Eq.~\eqref{eq:subsol1}, and since $(x^*(t),u^*(t)) \in X \times U$ a. e. on $[0,\tau^*]$, we know that $\alpha'(t) \geq 0$ a. e. on $[0,\tau^*]$. This proves that the differentiable function $\alpha(t)$ is non-decreasing function over $[0,\tau^*]$. By optimality of $V$ in \eqref{eq:lpc1}, and due to Th.~\ref{th:duality} (Assumptions~\ref{as:convexF}-\ref{as:finite} are satisfied), $\alpha(0) = V(0,x_0) = \mathsf{val}\eqref{eq:lpc1} = \tau^*$. Moreover,  $\alpha(\tau^*)= \tau^* +V(\tau^*,x^*(\tau^*)) \leq  \tau^*$, since $V$ satisfies Eq.~\eqref{eq:subsol2} and $x^*(\tau^*) \in K$. From $\alpha(\tau^*) \leq \alpha(0)$, we obtain that $\alpha(t)$ is constant. Hence, $\partial_t V(t,x^*(t)) + 1 + \nabla_x V (t,x^*(t))^\top  f(t,x^*(t),u^*(t)) = 0$, meaning 
\begin{align}
\nabla_x V (t,x^*(t))^\top  f(t,x^*(t),u^*(t)) = -(\partial_t V (t,x^*(t)) + 1),  \text{ a.e. on } [0,\tau^*]. \label{eq:val}
\end{align}
As  $V$ satisfies Eq.~\eqref{eq:subsol1}, we have that $\nabla_x V (t,x^*(t))^\top  f(t,x^*(t),u) \geq -(\partial_t V (t,x^*(t)) + 1)$ for all $t \in [0,\tau^*]$ and for all $u\in U$. Together with Eq.~\eqref{eq:val}, we deduce that $u^*(t) \in \mathcal{U}_V(t,x^*(t))$ for almost all $t \in [0, \tau^*]$. Based on this fact, Lemma~\ref{lem:tangentcone} yields that for almost all $t \in [0, \tau^*]$, $f(t,x^*(t),u^*(t)) \in T_X(x^*(t))$. Therefore, for almost all $t \in [0, \tau^*]$, $u^*(t) \in \mathcal{I}_V(t,x^*(t))$.
\end{proof}
We just saw that whenever $V \in C^1(\mathbb{R}^{n+1})$ is optimal in the linear program \eqref{eq:lpc1}, any minimal time trajectory is a solution of the differential inclusion \eqref{eq:diffinclusion} associated with the function $V$. However, we may not be able to compute exactly such an optimal function in practice, especially because it may not exist. The next theorem states the existence of closed-loop trajectories following \eqref{eq:diffinclusion}, for any function $V \in C^1(\mathbb{R}^{n+1})$.
\begin{theorem}
Under Assumptions~\ref{as:convexF}-\ref{as:finite}, if  $V \in C^1(\mathbb{R}^{n+1})$ is such that for any $(t,x) \in \mathbb{R}_+ \times X, \mathcal{I}_V(t,x) \neq \emptyset$, then there exists a trajectory $(x_V(\cdot),u_V(\cdot))$ starting at $(0,x_0)$, satisfying the differential inclusion \eqref{eq:diffinclusion} over $[0,\infty)$ and such that $x_V(t) \in X$ for almost all $t \in [0,\infty)$.
\label{th:existencetraj}
\end{theorem}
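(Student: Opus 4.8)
The plan is to recast the closed-loop dynamics \eqref{eq:diffinclusion} as an autonomous differential inclusion with state constraints and to invoke a viability (invariance) theorem for set-valued maps. First I would introduce the velocity map
\[
F_V(t,x) = \{ f(t,x,u) : u \in \mathcal{U}_V(t,x) \} = \operatorname*{argmin}_{v \in f(t,x,U)} \nabla_x V(t,x)^\top v,
\]
so that \eqref{eq:diffinclusion} is equivalent to $\dot{x}_V(t) \in F_V(t,x_V(t))$; a trajectory of this inclusion that remains in $X$ is exactly what we seek. To absorb the time dependence I would pass to the augmented state $y = (s,x)$ with dynamics $\dot{s} = 1$, $\dot{x} \in F_V(s,x)$ and constraint set $\mathbb{R}_+ \times X$, reducing the problem to the existence of a viable trajectory of the autonomous inclusion $\dot{y} \in \tilde{F}(y) := \{1\} \times F_V(s,x)$ inside $\mathbb{R}_+ \times X$.

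The core of the proof is to check that $F_V$ (hence $\tilde{F}$) is a Marchaud map. Nonemptiness and compactness of its values follow from the compactness of $U$ and the continuity of $f$: $\mathcal{U}_V(t,x)$ is the argmin of a continuous function over the compact set $U$, hence nonempty and compact, and its image under the continuous map $f(t,x,\cdot)$ is compact. Convexity is where Assumption~\ref{as:convexF} enters: since $f(t,x,U)$ is convex and compact, $F_V(t,x)$ is the set of minimizers of the linear functional $v \mapsto \nabla_x V(t,x)^\top v$ over this convex compact set, i.e. an exposed face, which is convex. Upper semicontinuity of $(t,x) \mapsto \mathcal{U}_V(t,x)$ follows from Berge's maximum theorem, using that $(t,x,u) \mapsto \nabla_x V(t,x)^\top f(t,x,u)$ is continuous (because $V \in C^1(\mathbb{R}^{n+1})$ and $f$ is continuous) and that the feasible correspondence is the constant compact set $U$; composing with the continuous $f$ preserves upper semicontinuity, so $F_V$ is u.s.c. with compact convex values. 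Finally, the Lipschitz continuity of $f$ together with the compactness of $U$ yields the linear growth bound $\sup_{w \in \tilde{F}(y)} \lVert w \rVert \le c(1 + \lVert y \rVert)$.

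Next I would verify the tangential viability condition and conclude. The hypothesis $\mathcal{I}_V(t,x) \neq \emptyset$ for all $(t,x) \in \mathbb{R}_+ \times X$ says precisely that there is $u \in \mathcal{U}_V(t,x)$ with $f(t,x,u) \in T_X(x)$, i.e. $F_V(t,x) \cap T_X(x) \neq \emptyset$; since the contingent cone to $\mathbb{R}_+ \times X$ at $(s,x)$ contains $\mathbb{R}_+ \times T_X(x)$, the augmented velocity $(1,v)$ lies in this cone, so the tangential condition $\tilde{F}(y) \cap T_{\mathbb{R}_+ \times X}(y) \neq \emptyset$ holds everywhere on the closed constraint set. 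The viability theorem for Marchaud maps \cite{aubin_set-valued_2009} then provides, from the initial condition $(0,x_0)$, an absolutely continuous trajectory defined on $[0,\infty)$ (global existence being guaranteed by the linear growth and the boundedness of $X$) that stays in $\mathbb{R}_+ \times X$ and satisfies the inclusion a.e.; its $x$-component is the desired $x_V(\cdot)$, with $x_V(t) \in X$. It remains to reconstruct a measurable control: given the measurable maps $t \mapsto x_V(t)$ and $t \mapsto \dot{x}_V(t) \in F_V(t,x_V(t))$, I would apply Filippov's measurable selection lemma to the Carath\'eodory map $(t,u) \mapsto f(t,x_V(t),u)$ to obtain a measurable $u_V(t) \in \mathcal{U}_V(t,x_V(t))$ with $\dot{x}_V(t) = f(t,x_V(t),u_V(t))$ a.e.

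The main obstacle is establishing the structural regularity of $F_V$ — in particular combining the face characterization (which needs Assumption~\ref{as:convexF} to make $f(t,x,U)$ convex, so that the minimizing set is convex) with the upper semicontinuity coming from Berge's theorem — and then correctly invoking the nonautonomous viability theorem together with the measurable selection of the control. Some care is also needed at the boundary $s = 0$ of the augmented constraint set, where only $\mathbb{R}_+ \times T_X(x)$ rather than $\mathbb{R} \times T_X(x)$ is available; but since the time component of the admissible velocity is $+1$, this causes no difficulty.
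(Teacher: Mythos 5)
Your proof is correct, and it takes a route that is related to but meaningfully different from the paper's. The paper also time-augments the system, but it keeps the \emph{full} velocity set $\hat{U}(y)=f(y,U)$ as a Marchaud control system and handles the argmin structure through the machinery of \emph{selection procedures}: it shows the map $SEL(y)=\operatorname{argmin}_{v\in f(y,U)}\nabla_x V(y)^\top v$ has closed graph and convex values, and then invokes the specialized viability theorem for selection procedures \cite[Def.~6.5.2, Th.~6.6.6]{aubin_jean-pierre_viability_1991}, which delivers in one shot a viable trajectory whose velocities lie in $SEL\cap REG$. You instead restrict the dynamics \emph{first}, proving that the argmin inclusion $F_V$ is itself a Marchaud map --- convexity of its values via the exposed-face consequence of Assumption~\ref{as:convexF}, upper semicontinuity via Berge's maximum theorem, linear growth from the Lipschitz property of $f$ --- and then apply only the plain viability theorem for Marchaud maps, recovering the control afterwards by Filippov's measurable selection lemma. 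Both arguments use Assumption~\ref{as:convexF} twice (convexity of $f(t,x,U)$ and, through it, convexity of the argmin set), both use the maximum theorem, and both need the product tangent-cone inclusion $(1,v)\in T_{\mathbb{R}_+\times X}(s,x)$ for $v\in T_X(x)$, which you verify more carefully than the paper (including the boundary $s=0$). What each approach buys: yours relies only on the most standard viability theorem and is more explicit about measurability of the control $u_V(\cdot)$ --- a point the paper glosses over when it asserts ``$v(t)=f(t,x(t),u(t))$ for a given $u(t)\in U$'' without addressing measurability --- while the paper's route avoids proving full upper semicontinuity of the argmin map (closed graph of $SEL$ suffices in the selection-procedure framework) and obtains the state--control pair directly from the cited theorem. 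One line worth adding to your Filippov step: the lemma as stated yields a measurable $u_V(t)\in U$ with $f(t,x_V(t),u_V(t))=\dot{x}_V(t)$; membership $u_V(t)\in\mathcal{U}_V(t,x_V(t))$ is then automatic, since $\dot{x}_V(t)\in F_V(t,x_V(t))$ attains the minimum of $v\mapsto\nabla_x V(t,x_V(t))^\top v$ over $f(t,x_V(t),U)$, so any preimage $u$ of $\dot{x}_V(t)$ attains the minimum defining $\mathcal{U}_V$.
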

\begin{proof}
We introduce an auxiliary control system to reduce to a time-invariant system with a convex control set, so as to fit in the setting of \cite[Th.~6.6.6]{aubin_jean-pierre_viability_1991}. In what follows, we use the notation $y = (t,x)$ again. We introduce two objects: the set-valued  map $\hat{U}(y) = \{ f(y,u), u \in U \}$ and the function $\hat{f}(y,v) = \begin{pmatrix}
1 \\ v
\end{pmatrix}$ for $y \in \mathbb{R}^{n+1}$ and $v \in  \mathbb{R}^{n}$. According to the terminology introduced in \cite[Def.~6.1.3]{aubin_jean-pierre_viability_1991}, $(\hat{U},\hat{f})$ is a Marchaud control system, as (i) $\{(y,v) \in \mathbb{R}^{2n+1} \colon v \in \hat{U}(y)\} $ is closed, (ii) $\hat{f}$ is continuous, (iii) the velocity set $\{1 \} \times f(y,U)$ is convex according to Assumption~\ref{as:convexF} and (iv) $\hat{f}$ has a linear growth, and so has $\hat{U}$ due to the fact that $f$ is Lipschitz continuous and $\mathcal{U}$ is bounded. We introduce $\mathcal{C} = \mathbb{R}_+ \times X$ and define the regulation map $REG(y) = \{ v \in \hat{U}(y) \colon \hat{f}(y,v) \in  T_{\mathcal{C}}(y) \}$. We also introduce the set-valued map  $SEL(y) = \underset{v \in \hat{U}(y)}{\text{argmin}} \: \nabla_x V(y)^\top v$. We prove now that the graph of $SEL$ is closed. For any converging sequence $(y_k,v_k) \rightarrow (\bar{y},\bar{v})$ with $v_k \in SEL(y_k)$, we see that for all $k \in \mathbb{N}$, $v_k = f(y_k,u_k)$ for a given $u_k \in U$ and $\nabla_x V(y_k)^\top f(y_k,u_k) = h(y_k)$, where $h(y_k) = \min_{u \in U} \nabla_x V(y_k)^\top f(y_k,u)$. Up to extracting a subsequence of $u_k$, we can assume that $u_k \rightarrow \bar{u}$, as $U$ is compact. Note that $h$ is continuous, by application of the Maximum Theorem \cite[Th.~2.1.6]{aubin_jean-pierre_viability_1991}, in so far as (i) $(y,u) \mapsto \nabla_x V(y)^\top f(y,u)$ is continuous, therefore lower and upper semicontinuous, (ii) the set-valued map $M(y)= U$ is compact-valued, and lower and upper semicontinuous since it is constant.   By continuity of $h$, $\nabla V$ and $f$, we conclude that $\nabla_x V(\bar{y})^\top \bar{v} = \nabla_x V(\bar{y})^\top f(\bar{y},\bar{u}) = h(\bar{y}) = \underset{v \in \hat{U}(\bar{y})}{\min} \: \nabla_x V(\bar{y})^\top v$, meaning that $\bar{v} = f(\bar{y},\bar{u}) \in SEL(\bar{y})$.

We notice that if $u \in \mathcal{I}_V(y)$, then $v = f(y,u) \in REG(y) \cap SEL(y)$. As $\mathcal{I}_V(y) \neq \emptyset$ for all $y \in \mathcal{C}$ (by assumption), $REG(y) \cap SEL(y) \neq \emptyset$. Together with the closedness of the graph of $SEL$, this means $SEL$ is a selection procedure of $REG$, according to the terminology of \cite[Def.~6.5.2]{aubin_jean-pierre_viability_1991}, and has convex values. We underline that  $REG(y) \neq \emptyset$, for all $y \in \mathcal{C}$, \textit{i.e.}, $\mathcal{C}$ is a viability domain for $(\hat{U},\hat{f})$.  As $(0, x_0) \in \mathcal{C}$, \cite[Th.~6.6.6]{aubin_jean-pierre_viability_1991} yields the existence of a solution $(y(\cdot), v(\cdot))$ such that $y(t) \in \mathcal{C}$, $v(t) \in REG(y(t))$ and 
\begin{align}
v(t) \in SEL(y(t)) \cap REG(y(t)),
\label{eq:argmin}
\end{align}
 for almost all $t \in [0, \infty)$. We notice first that $y_1(0) = 0$ and $\dot{y}_1(t) = 1$ for almost all $t \geq 0$, thus $y_1(t) = t$. Hence, we can indeed see $y(t)$ as $(t,x(t))$, with  $x(0) = x_0$ and $\dot{x}(t) = v(t)$. Moreover, $v(t) = f(t,x(t),u(t))$ for a given $u(t) \in U$, since $v(t) \in \hat{U}(y(t)) = f(t,x(t),U)$ a.e. on $[0,\infty)$. We deduce from $v(t) \in SEL(y(t))$, which comes from \eqref{eq:argmin}, that $u(t) \in \mathcal{U}_V(t,x(t))$. Moreover, we deduce from $y(t) \in \mathcal{C}$ that $x(t) \in X$ a.e. on $[0,\infty)$.
\end{proof}

\begin{remark}
The condition $\mathcal{I}_V(t,x) \neq \emptyset$ in Th.~\ref{th:existencetraj} may appear restrictive, because it is not evident why a vector $f(t,x,u_V)$ minimizing $\nabla_x V (t,x)^\top  f(t,x,u)$ over $u \in U$ would belong to $T_X(x)$. However, we have seen that under the hypotheses of Prop.~\ref{prop:opttraj}, Eq.~\eqref{eq:optcommand} yields $\mathcal{I}_V(t,x) \neq \emptyset$. Moreover, should the condition $\mathcal{I}_V(t,x) \neq \emptyset$ not be satisfied, we could enlarge the definition of $\mathcal{U}_V(t,x)$ in $\mathcal{U}_{V,\epsilon}(t,x) = \underset{u \in U}{\mathsf{argmin}_\epsilon} \:   \nabla_x V (t,x)^\top  f(t,x,u)$, so that for $\epsilon > 0$ large enough,  $\mathcal{I}_{V, \epsilon}(t,x) = \{ u \in \mathcal{U}_{V,\epsilon}(t,x) \colon f(t,x,u) \in T_X(x) \}\neq \emptyset$.
\end{remark}

\subsection{Performance of the feedback controller depending on the value function approximation error}
Previously, we introduced closed-loop trajectories satisfying the differential inclusion \eqref{eq:diffinclusion} with respect to a function $V \in C^1(\mathbb{R}^{n+1})$. In this section, we state some performance guarantees on those trajectories, depending on some properties of the function $V$. In the following, we assume that, up to an enlargement of the time horizon, the system can reach the target set starting from any initial condition $(t,x) \in [0, T] \times X$, and that the associated value function is Lipschitz.
\begin{assumption}
There exists a time $T^\sharp \geq T$ such that the minimal time control problem \eqref{eq:system}-\eqref{eq:inf} defined over $[0, T^\sharp]$ has a value function $V^\sharp$ which takes finite values over $Y = [0,T] \times X$, and is Lipschitz continuous.
\label{as:regV}
\end{assumption}
We emphasize that, under Assumption~\ref{as:regV},  $V^*(t,x) < \infty$ implies $V^*(t,x) = V^\sharp(t,x)$ for any $(t,x) \in Y$. Since $V^\sharp$ is Lipschitz continuous over $Y \subset \mathbb{R}^{n+1}$, it admits a Lipschitz continuous extension over $\mathbb{R}^{n+1}$ \cite[Chap.~3, Th.~1]{gariepy_measure_2015}. We assimilate the value function and its extension on $\mathbb{R}^{n+1}$, such that we can speak about the Clarke's generalized derivative $\partial^c V^\sharp(y)$ of $V^\sharp$ at $y \in Y$. For any $V\in C^1(\mathbb{R}^{n+1})$, we introduce the notation 
\begin{align}
\lVert \nabla V - \nabla V^\sharp \rVert_\infty = \sup_{y \in Y } \sup_{g\in\partial^c V^\sharp(y)} \lVert \nabla V(y) - g \rVert_2.
\end{align}
We also define the constant $C_f = \sup_{(t,x,u) \in Y \times U} \lVert f(t,x,u) \rVert <\infty$. %
\begin{theorem}
Let $V\in C^1(\mathbb{R}^{n+1})$ be a continuously differentiable function, and let $(x_V(\cdot),u_V(\cdot))$ be a closed-loop trajectory starting at $(0,x_0)$ satisfying the differential inclusion \eqref{eq:diffinclusion} and the state constraints over $[0, T]$.  We define $t_V = \sup \{t \in [0,T] \colon x_V([0,t])\subset  X \setminus K \}$. Then, under Assumptions~\ref{as:convexF}-\ref{as:regV},
\begin{align}
V^\sharp(t, x_V(t)) \leq (\tau^* - t)  + t \: 2 (1 + C_f) \lVert \nabla V - \nabla V^\sharp \rVert_\infty \quad \forall t \in[0, t_V],
\label{eq:timeerror}
\end{align}
where $\tau^* = V^*(0,x_0) =  V^\sharp(0,x_0) \leq t_V$. In particular, we notice that
\begin{align}
V^\sharp(\tau^*, x_V(\tau^*)) \leq 2 \tau^*  (1 + C_f) \lVert \nabla V - \nabla V^\sharp \rVert_\infty. \label{eq:timeerrortau}
\end{align}
\label{th:perf}
\end{theorem}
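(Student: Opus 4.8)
The plan is to reduce the integral bound \eqref{eq:timeerror} to a pointwise differential inequality along the closed-loop trajectory and then integrate. Write $\gamma(s) = (s, x_V(s))$ and set $E = \lVert \nabla V - \nabla V^\sharp \rVert_\infty$. Since $V^\sharp$ is Lipschitz on $Y$ and $x_V(\cdot)$ is absolutely continuous with $\dot x_V(s) = f(s, x_V(s), u_V(s))$, the scalar map $s \mapsto V^\sharp(\gamma(s))$ is Lipschitz, hence absolutely continuous, so that $V^\sharp(t, x_V(t)) = V^\sharp(0, x_0) + \int_0^t \frac{d}{ds} V^\sharp(\gamma(s))\,ds = \tau^* + \int_0^t \frac{d}{ds} V^\sharp(\gamma(s))\,ds$ for every $t \in [0, t_V]$. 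I would first record that $\tau^* \le t_V$: since $\tau^* = V^*(0,x_0)$ is the minimal time to reach $K$ from $x_0$, no admissible trajectory --- in particular $x_V$, whose control takes values in $U$ and which stays in $X$ by hypothesis --- can meet $K$ before $\tau^*$; hence $x_V([0,t]) \subset X \setminus K$ for all $t < \tau^*$, so $t_V \ge \tau^*$ and the evaluation at $t = \tau^*$ in \eqref{eq:timeerrortau} is licit.

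The crux is the pointwise estimate $\frac{d}{ds} V^\sharp(\gamma(s)) \le -1 + 2(1+C_f) E$ for almost every $s \in [0, t_V]$. At a.e.\ $s$ the derivative exists, and by the nonsmooth chain rule there is a generalized gradient $g = (g^0, g^x) \in \partial^c V^\sharp(\gamma(s))$ with $\frac{d}{ds} V^\sharp(\gamma(s)) = g^0 + (g^x)^\top f(s, x_V(s), u_V(s))$. By definition of $E$ and since $\gamma(s) \in Y$, this $g$ satisfies $\lVert \nabla V(\gamma(s)) - g \rVert_2 \le E$, so $|g^0 - \partial_t V| \le E$ and $\lVert g^x - \nabla_x V \rVert_2 \le E$ (all derivatives of $V$ taken at $\gamma(s)$). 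Using $\lVert f \rVert_2 \le C_f$ and Cauchy--Schwarz, a first comparison gives
\begin{align*}
\tfrac{d}{ds} V^\sharp(\gamma(s)) \le \partial_t V(\gamma(s)) + \nabla_x V(\gamma(s))^\top f(s,x_V(s),u_V(s)) + E(1 + C_f),
\end{align*}
and since $u_V(s) \in \mathcal{U}_V(s, x_V(s))$ minimizes $u \mapsto \nabla_x V^\top f$ by \eqref{eq:selection}, the middle term equals $\partial_t V + \min_{u\in U} \nabla_x V(\gamma(s))^\top f(s,x_V(s),u)$.

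It remains to bound this quantity by $-1 + E(1+C_f)$, which is where the HJB equation for $V^\sharp$ enters and where the main difficulty lies. At points where $V^\sharp$ is differentiable, \eqref{eq:hjb1} gives $\partial_t V^\sharp + \min_{u} (\nabla_x V^\sharp)^\top f = -1$ exactly, and a second gradient comparison --- again costing $E$ on the time slot and $E C_f$ on the spatial slot --- turns $\partial_t V + \min_u \nabla_x V^\top f$ into $\partial_t V^\sharp + \min_u (\nabla_x V^\sharp)^\top f + E(1+C_f) = -1 + E(1+C_f)$, completing the estimate; the two independent comparisons are precisely what produce the factor $2$ and the $(1+C_f)$. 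The obstacle is that $V^\sharp$ is only Lipschitz, so this equality is \emph{not} available at every point of the curve. I would resolve this by working with \emph{reachable} gradients: at the relevant $s$ the chain-rule gradient can be taken as a limit $g = \lim_k \nabla V^\sharp(y_k)$ along differentiability points $y_k \to \gamma(s)$, and since \eqref{eq:hjb1} holds with equality at each $y_k$ while the map $(q_0,q)\mapsto q_0 + 1 + \min_u q^\top f$ is continuous, the equality $g^0 + 1 + \min_u (g^x)^\top f = 0$ passes to the limit. The subtle point --- and the reason one cannot invoke an arbitrary Clarke subgradient --- is that the supersolution ($\le$) side of HJB can fail on the interior of $\partial^c V^\sharp(\gamma(s))$ (e.g.\ at a convex kink, where only the subsolution inequality survives); the argument must genuinely use a reachable gradient, for which HJB is an equality. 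Establishing that the derivative formula can be realized with such a reachable $g$ for a.e.\ $s$ is the technical heart of the proof.

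Finally, integrating the pointwise inequality yields
\begin{align*}
V^\sharp(t,x_V(t)) = \tau^* + \int_0^t \tfrac{d}{ds}V^\sharp(\gamma(s))\,ds \le (\tau^* - t) + t\,2(1+C_f)E,
\end{align*}
which is \eqref{eq:timeerror}; specializing to $t = \tau^* \le t_V$ cancels the first term and gives \eqref{eq:timeerrortau}.
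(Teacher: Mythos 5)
Your overall architecture matches the paper's: an a.e.\ differential inequality $\frac{d}{ds}V^\sharp(s,x_V(s)) \le -1 + 2(1+C_f)E$ along the closed-loop trajectory (writing $E = \lVert \nabla V - \nabla V^\sharp\rVert_\infty$), obtained from two gradient comparisons each costing $(1+C_f)E$, followed by integration; your handling of $\tau^*\le t_V$, of absolute continuity, and of the integration step is correct. The gap is exactly where you yourself locate it, and it is genuine: your argument rests on the claim that for a.e.\ $s$ the derivative $\frac{d}{ds}V^\sharp(\gamma(s))$ equals $g^\top\dot\gamma(s)$ for a \emph{reachable} gradient $g=\lim_k \nabla V^\sharp(y_k)$. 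What is actually available (the paper's Lemma~\ref{lem:clarkedifferential}) is only that this derivative lies between $\min_{g\in\partial^c V^\sharp}g^\top\dot\gamma$ and $\max_{g\in\partial^c V^\sharp}g^\top\dot\gamma$; since $\partial^c V^\sharp$ is convex and compact, some \emph{Clarke} subgradient realizes the derivative, but the set of reachable-gradient pairings $\{g^\top\dot\gamma(s) \colon g \text{ reachable}\}$ is in general a disconnected compact set whose convex hull is that interval, and there is no general theorem placing the derivative inside the set itself rather than its hull --- this is precisely the failure of path-differentiability for general Lipschitz functions that the paper flags after Lemma~\ref{lem:clarkedifferential}. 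A second, smaller problem: the pointwise HJB \emph{equality} at differentiability points of $V^\sharp$ is not available either; it is nowhere proved in the paper, and under state constraints it genuinely fails at points of $\partial X$, where the unconstrained minimum over $u\in U$ includes velocities leaving $X$, so only the inequality $\partial_t V^\sharp + 1 + \min_u (\nabla_x V^\sharp)^\top f \le 0$ (the direction produced by following an optimal trajectory) survives. That happens to be the only direction your comparison needs, but it too requires proof.

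The paper circumvents both difficulties with one move: it defines the Clarke Hamiltonian $H_F(y) = 1 + \min_{u \in U,\, g\in\partial^c F(y)} g^\top (1, f(y,u))^\top$ and proves $H_{V^\sharp}(t_1,x_1)\le 0$ at \emph{every} point of $[0,T]\times X\setminus K$, with no recourse to differentiability points: from $(t_1,x_1)$ it runs an optimal trajectory (Th.~\ref{th:exopti} together with Assumption~\ref{as:regV}), along which $V^\sharp$ decreases at rate exactly $-1$, applies the lower inequality of Lemma~\ref{lem:clarkedifferential} to get $H_{V^\sharp}\le 0$ a.e.\ along that trajectory, and passes to the initial point by lower semicontinuity of $H_{V^\sharp}$ (Maximum Theorem). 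One comparison then gives $1+\partial_t V+\min_u(\nabla_x V)^\top f \le (1+C_f)E$ on all of $[0,T]\times X\setminus K$, and the upper inequality of Lemma~\ref{lem:clarkedifferential} along the closed-loop trajectory supplies the second $(1+C_f)E$. Note also that your route could be repaired without reachable gradients: the quantity you must bound, $G(y)=1+\partial_t V(y)+\min_u \nabla_x V(y)^\top f(y,u)$, involves only the $C^1$ function $V$ and is therefore continuous, so it would suffice to establish the inequality $\partial_t V^\sharp + 1 + \min_u (\nabla_x V^\sharp)^\top f\le 0$ at a.e.\ point, deduce $G\le(1+C_f)E$ on a dense set, and conclude by continuity; but that a.e.\ inequality still requires the optimal-trajectory argument, so the missing step is not cosmetic.
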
 In Eq.~\eqref{eq:timeerrortau},  $V^\sharp(\tau^*, x_V(\tau^*))$ measures how far the closed-loop trajectory $(x_V(\cdot),u_V(\cdot))$ is from the target set $K$ at the moment when the time-optimal trajectory reaches $K$. As a corollary, we give a condition for the closed-loop trajectory $(x_V(\cdot),u_V(\cdot))$ to effectively reach the target set $K$, with a bounded delay compared to the time-optimal trajectory.
\begin{corollary}
Under the same hypotheses as Th.~\ref{th:perf}, if  $\lVert \nabla V - \nabla V^\sharp \rVert_\infty \leq \frac{1 - \tau^*/T}{2 (1 + C_f)}$, then 
\begin{align}
x_V(t_V) \in K \text{ with } t_V \in [\tau^*, \frac{1}{1 - 2 (1 + C_f) \lVert \nabla V - \nabla V^\sharp\rVert_\infty}\tau^*].
\end{align}
\label{cor:perf}
\end{corollary}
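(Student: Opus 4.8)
The plan is to combine the quantitative bound \eqref{eq:timeerror} from Theorem~\ref{th:perf} with the topological definition of the hitting time $t_V$ and the elementary fact that $V^\sharp\geq 0$. Throughout I abbreviate $e:=\lVert \nabla V-\nabla V^\sharp\rVert_\infty$ and rewrite \eqref{eq:timeerror} in the affine-in-$t$ form $V^\sharp(t,x_V(t))\leq \tau^*-\big(1-2(1+C_f)e\big)\,t$ for all $t\in[0,t_V]$. The hypothesis $e\leq\frac{1-\tau^*/T}{2(1+C_f)}$ is exactly $2(1+C_f)e\leq 1-\tau^*/T$; assuming $\tau^*>0$ (the case $\tau^*=0$ is trivial since then $x_0\in K$), this makes the slope $1-2(1+C_f)e$ strictly positive and shows that the threshold $t^\sharp:=\frac{\tau^*}{1-2(1+C_f)e}$ satisfies $\tau^*\leq t^\sharp\leq T$.

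First I would prove $t_V\leq t^\sharp$ by contradiction. If $t_V>t^\sharp$, pick any $t\in(t^\sharp,t_V)$; since $t\leq t_V$ the bound applies and gives $V^\sharp(t,x_V(t))\leq \tau^*-\big(1-2(1+C_f)e\big)\,t<0$, the strict inequality using $t>t^\sharp$ and positivity of the slope. This contradicts $V^\sharp\geq 0$ (a minimal time is nonnegative). Together with $\tau^*\leq t_V$, already furnished by Theorem~\ref{th:perf}, this yields $t_V\in[\tau^*,t^\sharp]$, which is precisely the claimed interval.

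Next I would establish $x_V(t_V)\in K$, distinguishing $t_V<T$ from $t_V=T$. When $t_V<T$, I would argue from the definition $t_V=\sup\{t\colon x_V([0,t])\subset X\setminus K\}$: the trajectory $x_V(\cdot)$ is continuous and, by hypothesis, stays in $X$ on $[0,T]$, and for every $t<t_V$ one has $x_V(t)\in X\setminus K$. If $x_V(t_V)$ were not in $K$, then, $K$ being closed, its complement is open, so $x_V$ would remain outside $K$ and inside $X$ on a small interval $[t_V,t_V+\varepsilon]\subset[0,T]$, making $t_V+\varepsilon$ admissible and contradicting the maximality of $t_V$; hence $x_V(t_V)\in K$. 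When $t_V=T$, the chain $T=t_V\leq t^\sharp\leq T$ forces $t^\sharp=T$, i.e. $1-2(1+C_f)e=\tau^*/T$; evaluating \eqref{eq:timeerror} at $t=T$ then gives $V^\sharp(T,x_V(T))\leq \tau^*-T\cdot \tau^*/T=0$, and $V^\sharp\geq 0$ forces $V^\sharp(T,x_V(T))=0$, hence $x_V(T)\in K$.

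The routine computations (rewriting \eqref{eq:timeerror} and locating the affine threshold $t^\sharp$) are immediate; the main obstacle is the second step, namely upgrading the inequality ``$V^\sharp$ reaches $0$'' into the set-membership statement $x_V(t_V)\in K$. The subtlety is that the bound alone pins the value $0$ only at $t=t^\sharp$, whereas $t_V$ may be strictly smaller than $t^\sharp$; this is why the continuity argument on the supremum defining $t_V$ is needed when $t_V<T$, with the degenerate boundary case $t_V=T$ handled separately through the endpoint evaluation of the bound. I would take care to record that $V^\sharp(t,x)=0$ characterizes $x\in K$: the finite velocity bound $C_f$ implies that reaching the closed set $K$ from outside takes positive time, so the vanishing of $V^\sharp$ indeed certifies arrival in the target.
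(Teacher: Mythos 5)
Your proof is correct and takes essentially the same route as the paper's: both arguments rest on Eq.~\eqref{eq:timeerror}, the nonnegativity of $V^\sharp$ (with $V^\sharp=0$ certifying membership in $K$), and the dichotomy ``$x_V(t_V)\in K$ or $t_V=T$'' arising from the definition of $t_V$. The only differences are cosmetic: the paper first establishes $x_V(t_V)\in K$ and then plugs $V^\sharp(t_V,x_V(t_V))=0$ into \eqref{eq:timeerror} to obtain the time bound, whereas you derive the time bound first by contradiction, and you spell out details (the continuity argument behind the supremum dichotomy, the trivial case $\tau^*=0$, and why vanishing of $V^\sharp$ implies arrival in $K$) that the paper leaves implicit.
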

We underline that the hitting time $t_V \geq \tau^*$ converges to the minimal time $\tau^*$, when the approximation error $\lVert \nabla V - \nabla V^\sharp\rVert_\infty$ vanishes.
\begin{proof}[Proof of Th~\ref{th:perf} and Cor.~\ref{cor:perf}] 
For any Lipschitz continuous function $F: \mathbb{R}^{n+1} \to \mathbb{R}$, we recall that $\partial^c F(y)$ denote the Clarke's generalized derivative at $y$, and we define $H_F$ as 
\begin{align} \label{eq:defHmin}
H_F(y) =  1 + \min_{\begin{subarray}{c} u \in U \\ g \in \partial^c F(y) \end{subarray}} \{ g^\top \begin{pmatrix}
1 \\ f(y,u)
\end{pmatrix} \}.
\end{align}
The minimum is attained by continuity of the objective, and by compactness of $U$ and $\partial^c F(y)$ (see \cite{clarke_generalized_1975}). Note also that for any $V  \in C^1(\mathbb{R}^{n+1})$, for any $y = (t,x) \in \mathbb{R}^{n+1}$, $H_V(t,x) = 1 + \partial_t V(t,x) + \min_{u \in U} \nabla_x V(t,x)^\top f(t,x,u)$, and the argmin is $\mathcal{U}_V(t,x)$. By application of the Maximum Theorem \cite[Th.~2.1.6]{aubin_jean-pierre_viability_1991},  we know that $H_{F}$ is lower semi-continuous, since (i) $\partial^c F(y)$ is a compact-valued and upper semi-continuous set-valued map \cite{clarke_generalized_1975}, therefore so is $y \mapsto U \times \partial^c F(y)$, and (ii) $(y,u,g) \mapsto g^\top \begin{pmatrix}
1 \\ f(y,u)
\end{pmatrix} $ is continuous.

First, we take any $y_1 = (t_1,x_1) \in [0, T] \times X \setminus K$, and we prove that $H_{V^\sharp}(t_1,x_1) \leq 0$. According to Assumption~\ref{as:regV}, $V^\sharp(t_1,x_1) < \infty$, and according to Th.~\ref{th:exopti} applied to the control system \eqref{eq:system}-\eqref{eq:inf} on the interval $[0 , T^\sharp]$, there exists an optimal trajectory $(x(\cdot), u(\cdot))$ over $[t_1, t_2]$ (with $t_2 > t_1$ since $x_0 \notin K$) starting from $(t_1, x_1)$. By definition, $V^\sharp(t_1,x_1) = t_2 - t_1$. We can also prove that for all $t \in [t_1, t_2]$, $V^\sharp(t,x(t)) = t_2 - t$: (i) the trajectory restricted to $[t, t_2]$, yields an admissible trajectory starting from $(t,x(t))$, therefore $V^\sharp(t,x(t)) \leq t_2 - t$, and (ii) for an optimal trajectory $(\tilde{x}(\cdot), \tilde{u}(\cdot))$ starting from $(t,x(t))$ over $[t, t_3]$, the trajectory following $(x(\cdot), u(\cdot))$ over $[t_1, t]$ and $(\tilde{x}(\cdot), \tilde{u}(\cdot))$ over $[t, t_3]$ is admissible and starting from $(t_1, x_1)$, therefore, $V^\sharp(t,x(t)) + (t-t_1) \geq V^\sharp(t_1,x_1) = t_2 - t_1$, giving $V^\sharp(t,x(t)) \geq t_2 - t$. As $\alpha(t) = V^\sharp(t,x(t)) = t_2 - t$ for all $t \in [t_1, t_2]$, we deduce that  
\begin{align}
\alpha'(t) = -1 \text{ a. e. on } [t_1, t_2].
\label{eq:constantderivative}
\end{align}
Moreover, since $V^\sharp$ is Lipschitz continuous by assumption, and $t \mapsto (t, x(t))$ is Lipschitz continuous as $x(t)$ is differentiable a.e. with a bounded derivative, Lemma~\ref{lem:clarkedifferential} gives:
$\alpha'(t) = \frac{d (V^\sharp(t,x(t)))}{dt} \geq \min_{g \in \partial^c V^{\sharp}(t,x(t))} g^\top \begin{pmatrix}
1 \\ \dot{x}(t)
\end{pmatrix}$ a.e. on $[t_1, t_2]$. Using that $\dot{x}(t) = f(t,x(t),u(t))$ a.e. on $[t_1, t_2]$, and the definition of $H_{V^\sharp}$:

 \begin{align}
\alpha'(t) \geq \min_{g \in \partial^c V^{\sharp}(t,x(t))} g^\top \begin{pmatrix}
1 \\ f(t,x(t),u(t))
\end{pmatrix}
 \geq H_{V^\sharp}(t,x(t)) - 1,
\end{align}
a.e. on $[t_1, t_2]$. Combining this with Eq.~\eqref{eq:constantderivative}, we deduce that for almost all $t \in [t_1, t_2]$, $H_{V^\sharp}(t,x(t)) \leq 0$. By lower semi-continuity of $H_{V^{\sharp}}$ (see above), and by continuity of $x(\cdot)$
\begin{align}
H_{V^{\sharp}}(t_1,x_1) \leq 0. \label{eq:negeps}
\end{align}

Second, still for any $(t_1,x_1) \in [0, T] \times X \setminus K$, we observe that there exists $(g,u_1) \in \partial^c V^\sharp(t_1,x_1) \times U$ such that $H_{V^\sharp}(t_1,x_1) = 1 + g^\top \begin{pmatrix}
1 \\ f(t_1,x_1,u_1)
\end{pmatrix}$; indeed, we already mentioned that the minimum in \eqref{eq:defHmin} is attained. Therefore, for any $V \in C^1(\mathbb{R}^{n+1})$
\begin{align}1 + \partial_t V(t_1,x_1) + \nabla_x V (t_1,x_1)^\top f(t_1,x_1,u_1) & = H_{V^{\sharp}}(t_1,x_1) +   (\nabla V(t_1,x_1) - g)^\top \begin{pmatrix}
1 \\ f(t_1,x_1,u_1)\end{pmatrix}  \\ & \leq H_{V^{\sharp}}(t_1,x_1) +  \lVert \nabla V - \nabla V^\sharp \rVert (1+C_f), \label{eq:chasles}
\end{align}
the inequality being due to  Cauchy-Schwartz inequality, and the definition of $\lVert \nabla V - \nabla V^\sharp \rVert$. We know that  $H_{V}(t_1,x_1) \leq \partial_t V(t_1,x_1) + 1 + \nabla_x V (t_1,x_1)^\top f(t_1,x_1,u_1)$ by definition of $H_{V}(t_1,x_1)$ (as $u_1 \in U$),  therefore Eq.~\eqref{eq:chasles} gives $H_{V}(t_1,x_1) \leq H_{V^{\sharp}}(t_1,x_1) + (1+C_f) \lVert \nabla V - \nabla V^\sharp \rVert$. Using this inequality and Eq.~\eqref{eq:negeps}, we deduce that for all $(t_1,x_1) \in [0, T] \times X \setminus K$,
\begin{align}
H_{V}(t_1,x_1) \leq (1+C_f) \lVert \nabla V - \nabla V^\sharp \rVert.
\label{eq:boundeps}
\end{align}
Third, according to the hypotheses of the theorem, we take any $V\in C^1(\mathbb{R}^{n+1})$, and any closed-loop trajectory $(x_V(\cdot),u_V(\cdot))$ starting at $(0,x_0)$ satisfying the differential inclusion \eqref{eq:diffinclusion} and the state constraints over $[0, T]$. We, then, study the evolution of $V^\sharp$ over this trajectory. As  $x_V(t)$ is Lipschitz continuous, Lemma~\ref{lem:clarkedifferential} yields the existence of $g(t) \in \partial^c V^\sharp(t,x_V(t))$ for almost all $t \in [0, T]$, such that 
\begin{align}
\frac{d}{dt}\left(V^\sharp(t,x_V(t))\right) & \leq g(t)^\top \begin{pmatrix}
1 \\ f(t,x_V(t),u_V(t))
\end{pmatrix} \text{ a.e. on } [0, T].
\end{align}
As $u_V(t) \in \mathcal{U}_V(t,x_V(t))$, we know that $H_V(t,x_V(t)) = 1 + \nabla V(t,x_V(t))^\top \begin{pmatrix}
1 \\ f(t,x_V(t),u_V(t)),
\end{pmatrix}$, and therefore,
\begin{align}
   \frac{d}{dt}\left(V^\sharp(t,x_V(t))\right) & \leq  -1 + H_V(t,x_V(t)) + (g(t) - \nabla V(t,x_V(t)))^\top \begin{pmatrix}
1 \\ f(t,x_V(t),u_V(t)),
\end{pmatrix}
\end{align}
We deduce, using Cauchy-Schwartz inequality and the definition of $\lVert \nabla V - \nabla V^\sharp \rVert$,
\begin{align}
\frac{d}{dt}\left(V^\sharp(t,x_V(t))\right) \leq -1 + H_V(t,x_V(t)) +  (1+C_f) \lVert \nabla V - \nabla V^\sharp \rVert, \label{eq:evvcirc}
\end{align}
for almost all $[0, T]$. Moreover, for all $t \in [0, t_V)$, $x_V(t) \notin K$. Therefore, we can apply Eq.~\eqref{eq:boundeps} to deduce, in combination with Eq.~\eqref{eq:evvcirc}, that for almost all $[0, t_V]$,$
\frac{d}{dt}\left(V^\sharp(t,x_V(t))\right) \leq -1 + 2  (1+C_f) \lVert \nabla V - \nabla V^\sharp \rVert$. By integration, we deduce that for all $t \in [0, t_V]$, $V^\sharp(t, x_V(t)) - \tau^* \leq - t + 2 t (1+C_f) \lVert \nabla V - \nabla V^\sharp \rVert$, as $V^\sharp(0,x_V(0)) = V^\sharp(0,x_0) = \tau^*$. This proves Eq.~\eqref{eq:timeerror}. 

Fourth and finally, we prove the corollary. Due to the definition of $t_V$, the following (non-exclusive) alternative holds: either $x_V(t_V) \in K$ or $t_V = T$. Moreover, if  $\lVert \nabla V - \nabla V^\sharp \rVert_\infty \leq \frac{1 - \tau^*/T}{2 (1 + C_f)}$, then  $V^\sharp(t, x_V(t)) - \tau^* \leq - t +  t (1 - \tau^*/T)$ and $V^\sharp(t, x_V(t)) \leq \tau^* (1 - t/T)$ for all $t \in [0, t_V]$.  We notice that if $t_V= T$, then $V^\sharp(t_V, x_V(t_V)) \leq 0$, \textit{i.e.}, $x_V(t_V) \in K$. Coming to the aforementioned alternative, we deduce that $x_V(t_V) \in K$. Moreover, this fact combined with Eq.~\eqref{eq:timeerror} gives us that $0 \leq (\tau^* - t_V) + t_V \: 2 (1 + C_f) \lVert \nabla V - \nabla V^\sharp \rVert_\infty$, hence 
\begin{align}
t_V \left(1 - 2 (1 + C_f) \lVert \nabla V - \nabla V^\sharp \rVert_\infty \right)  \leq  \tau^*.
\label{eq:ineqtv}
\end{align} 
By assumption, $1 - 2 (1 + C_f) \lVert \nabla V - \nabla V^\sharp \rVert_\infty \geq \tau^*/T >0$, we can thus divide Eq.~\eqref{eq:ineqtv} by this quantity to obtain the result of the corollary: $t_V   \leq  \tau^*/\left( 1 - 2 (1 + C_f) \lVert \nabla V - \nabla V^\sharp \rVert_\infty \right)$.
\end{proof}
In the previous theorem and the corollary, we saw that the suboptimality, in terms of hitting time, of a closed-loop trajectory $(x_V(\cdot),u_V(\cdot))$ satisfying the differential inclusion \eqref{eq:diffinclusion} decreases as the approximation error $\lVert \nabla V - \nabla V^\sharp \rVert_\infty$ decreases. Furthermore, we see that the closed-loop trajectory comes close to optimality when the approximation error vanishes. We now study a sufficient condition under which the approximation $\lVert \nabla V_d - \nabla V^\sharp \rVert_\infty$ can be made arbitrarily small, using a polynomial $V_d(t,x)$ of sufficiently large degree $d \in \mathbb{N}$.

\subsection{A sufficient regularity condition for the existence of near-optimal controllers based on polynomials}
In the case where the value function is twice differentiable, there exist polynomials $V_d$ with such a vanishing approximation error $\lVert \nabla V_d - \nabla V^\sharp \rVert_\infty$, and that are near optimal solutions in the hierarchy of semi-infinite programs $\eqref{eq:lprd}$.
\begin{theorem} \label{th:convergenceC2hjb}
Under Assumptions~\ref{as:convexF}-\ref{as:regV}, if the value function $V^\sharp$ belongs to $C^2(\mathbb{R}^p | Y)$, and is a subsolution to the HJB equation, then there exist a sequence of polynomials $(V_d(t,x))_{d\in\mathbb{N}^*}$, with  $V_d(t,x) \in \mathbb{R}_d[t,x_1,\dots, x_n]$, and two constants $c_1, c_2 >0$, such that for all $d \in \mathbb{N}^*$,
\begin{itemize}
\item The polynomial $V_d(t,x)$ is feasible, and $\frac{c_1}{d}$-optimal in the problems \eqref{eq:lpc1} and \eqref{eq:lprd},
\item The following inequality holds: $\lVert \nabla V_d - \nabla V^\sharp \rVert_\infty  \leq \frac{c_2}{d}$.
\end{itemize}
\end{theorem}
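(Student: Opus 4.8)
The plan is to reduce the claim to a single \emph{quantitative} simultaneous polynomial approximation of $V^\sharp$ together with its gradient, and then to make the resulting polynomial exactly feasible by the shift of Lemma~\ref{lem:feas}. Since $V^\sharp\in C^2(\mathbb{R}^{n+1}\mid Y)$, I first fix a $C^2$ extension of $V^\sharp$ to $\mathbb{R}^{n+1}$ and a box $B\supseteq Y$, and I invoke a Jackson-type simultaneous approximation theorem on $B$ (a quantitative refinement of the qualitative Lemma~\ref{lem:polapprox} used in the proof of Th.~\ref{th:polyvf}): there exist polynomials $P_d\in\mathbb{R}_d[t,x_1,\dots,x_n]$ and a constant $a>0$ such that, on $Y$,
\[
\sup_{y\in Y}|P_d(y)-V^\sharp(y)|\le\frac{a}{d}\qquad\text{and}\qquad\lVert\nabla P_d-\nabla V^\sharp\rVert_\infty\le\frac{a}{d}.
\]
Because $V^\sharp$ is $C^2$ we have $\partial^c V^\sharp(y)=\{\nabla V^\sharp(y)\}$, so the second bound is exactly $\sup_{y\in Y}\lVert\nabla P_d(y)-\nabla V^\sharp(y)\rVert_2\le a/d$. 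The $C^2$ regularity is precisely what yields the $O(1/d)$ rate \emph{on the gradient}; a mere $C^1$ hypothesis would only return the qualitative conclusion of Lemma~\ref{lem:polapprox}.

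Next I bound the feasibility error of $P_d$ in \eqref{eq:lpc1}--\eqref{eq:lprd}. For the HJB constraint, for every $(t,x,u)\in Y\times U$ I decompose
\[
\partial_t P_d+1+(\nabla_x P_d)^\top f=\big(\partial_t V^\sharp+1+(\nabla_x V^\sharp)^\top f\big)+\nabla(P_d-V^\sharp)^\top\begin{pmatrix}1\\ f\end{pmatrix}.
\]
The first term is nonnegative since $V^\sharp$ satisfies \eqref{eq:subsol1}, while the second is at least $-\lVert\nabla P_d-\nabla V^\sharp\rVert_\infty(1+C_f)\ge-a(1+C_f)/d$ by Cauchy--Schwarz and the definition of $C_f$. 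For the terminal constraint, $V^\sharp\le0$ on $[0,T]\times K$ by \eqref{eq:subsol2}, hence $P_d\le a/d$ there. Thus $P_d$ has feasibility error at most $\eta_d:=a(1+C_f)/d=O(1/d)$, which dominates the weaker terminal error.

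I then set $V_d(t,x)=P_d(t,x)+\eta_d(t-1-T)$, which is exactly feasible in \eqref{eq:lpc1} and \eqref{eq:lprd} by Lemma~\ref{lem:feas}, and the two conclusions follow by routine estimates. For the gradient bound, $\nabla(t-1-T)=(1,0,\dots,0)$ has unit norm, so $\lVert\nabla V_d-\nabla V^\sharp\rVert_\infty\le\lVert\nabla P_d-\nabla V^\sharp\rVert_\infty+\eta_d\le c_2/d$. For optimality, using $V^\sharp(0,x_0)=V^*(0,x_0)$ (the observation following Assumption~\ref{as:regV}, valid since $V^*(0,x_0)<\infty$ by Assumption~\ref{as:finite}) together with $V_d(0,x_0)=P_d(0,x_0)-\eta_d(1+T)\ge V^\sharp(0,x_0)-a/d-\eta_d(1+T)$, I obtain $V_d(0,x_0)\ge V^*(0,x_0)-c_1/d$. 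Since $V_d$ is feasible in both \eqref{eq:lpc1} and \eqref{eq:lprd}, whose optimal values are at most $V^*(0,x_0)$, this yields $c_1/d$-optimality in each problem.

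The main obstacle is establishing the quantitative multivariate simultaneous approximation bound with the stated $O(1/d)$ rate on the gradient; everything else is a perturbation argument in the spirit of Th.~\ref{th:polyvf}. In one variable this is the classical Jackson theorem on simultaneous approximation of a $C^2$ function and its derivative; on a box in several variables it requires a tensor-product construction or a multivariate Jackson kernel, and one must verify that a \emph{single} polynomial $P_d$ controls both $V^\sharp$ and $\nabla V^\sharp$ at rate $1/d$. This is exactly where the hypothesis $V^\sharp\in C^2(\mathbb{R}^{n+1}\mid Y)$, rather than merely $C^1$, is used.
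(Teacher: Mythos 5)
Your proof is correct and follows essentially the same route as the paper's: take a $C^2$ extension $Q$ of $V^\sharp$, approximate it and its gradient simultaneously by polynomials at rate $O(1/d)$, bound the HJB and terminal feasibility errors by Cauchy--Schwarz and the subsolution property of $V^\sharp$, and restore exact feasibility with the affine-in-$t$ shift of Lemma~\ref{lem:feas}, yielding the same constants up to minor optimization. The one correction: the ``main obstacle'' you identify is not an obstacle, since Lemma~\ref{lem:polapprox} is already quantitative --- it gives exactly the $A/d$ rate on both the function and its gradient under the hypothesis of a locally Lipschitz gradient (which $C^2$ implies, and which is how the paper invokes it) --- so no separate Jackson-type theorem needs to be established.
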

Under these hypotheses, the polynomials $V_d(t,x)$ are subsolutions to the HJB equation, and form a maximizing sequence of the problem \eqref{eq:lpc1}; we also  notice that the hierarchy of semi-infinite programs \eqref{eq:lprd} converges in $O(\frac{1}{d})$ in terms of objective value. Moreover, according to Cor.~\ref{cor:perf}, for any sequence of closed-loop trajectories $(x_{V_d}(\cdot), u_{V_d}(\cdot))$, the associated hitting times converge to the minimal time $\tau^*$: this is a minimizing sequence of trajectories for the optimal time control problem \eqref{eq:system}-\eqref{eq:inf}. 
\begin{proof} By definition of $C^2(\mathbb{R}^{n+1} |Y)$, there exists a function $Q \in C^2(\mathbb{R}^{n+1})$ such that $V^\sharp(y) = Q(y)$ and $\nabla V^\sharp(y) = \nabla Q (y)$ for all $y \in Y$. In application of Lemma~\ref{lem:polapprox}, as $Q$ has a locally Lipschitz gradient since it is twice differentiable, there exists a constant $A > 0$, and a sequence of polynomials $(w_d(t,x))_{d \in \mathbb{N}^*}$ with $w_d(t,x) \in \mathbb{R}_d[t,x_1,\dots,x_n] $ and such that for all $(t,x) \in Y$, $| w_d(t,x) - Q(t,x) | \leq \frac{A}{d}$ and $\lVert \nabla w_d(t,x) - \nabla Q(t,x) \rVert_2 \leq \frac{A}{d}$. With $\alpha_d = \frac{A(1+C_f)}{d}$, and $\beta_d = \frac{A}{d}(1 + T + T C_f)$, we define the polynomial $V_d(t,x) = w_{d}(t,x) + \alpha_d t - \beta_{d} \in \mathbb{R}_d[t,x_1,\dots,x_n]$. First, we notice that $\lVert \nabla V_d - \nabla V^\sharp \rVert_\infty \leq \lVert \nabla w_{d}  - \nabla V^\sharp \rVert_\infty + \alpha_d \leq \frac{A ( 2 + C_f)}{d}$ for all $d\geq 1$. This proves the second point of the theorem, having defined the constant $c_2 = A(2+C_f)$, which is independent from $d$. We prove now the first point.  For all $d \geq 1$, and $(t,x,u) \in [0, T] \times X \times U$,
\begin{align}
\partial_t V_d(t,x) + 1 + \nabla_x V_d(t,x)^\top f(t,x,u) &= \alpha_d + \partial_t V^\sharp(t,x)  + 1 +  \nabla_x V^\sharp(t,x)^\top f(t,x,u) \\ 
& & \hspace{-4cm} + (\nabla w_{d}(t,x) - \nabla V^\sharp(t,x))^\top \begin{pmatrix}
1 \\ f(t,x,u)
\end{pmatrix}  \\
& \geq \alpha_d +  (\nabla w_{d}(t,x) - \nabla V^\sharp(t,x))^\top \begin{pmatrix}
1 \\ f(t,x,u)
\end{pmatrix},
\end{align}
as $V^\sharp$ is a subsolution to the HJB equation, hence satisfies Eq.~\eqref{eq:subsol1}. Using the Cauchy-Schwartz inequality, we obtain $\partial_t V_d(t,x) + 1 + \nabla_x V_d(t,x)^\top f(t,x,u) \geq \alpha_d - \lVert \nabla w_d(y) - \nabla V^\sharp(y) \rVert_2 (1 + C_f) \geq \alpha_d - \frac{A}{d} (1 + C_f) = 0$. This proves that $V_d$ satisfies Eq.~\eqref{eq:subsol1}.  It also satisfies Eq.~\eqref{eq:subsol2}, because for any $(t,x) \in [0, T] \times K$,
\begin{align}
V_d(t, x) &= w_{d}(t,x) + \alpha_d t  - \beta_d \\
& \leq V^\sharp(t,x) + \frac{A}{d} + \alpha_d t  - \beta_d \\
& \leq V^\sharp(t,x) + \frac{A}{d} + \alpha_d T  - \beta_d \\
& \leq V^\sharp(t,x) = 0.
\end{align}
since $\frac{A}{d} + \alpha_d T  - \beta_d = 0$ by definition of $\beta_d$, and since $x \in K$. We deduce that $V_d$ is feasible in \eqref{eq:lprd}. Its objective value is $V_d(0,x_0) \geq w_{d}(0, x_0) - \beta_d \geq V^\sharp(0, x_0) - \frac{A}{d} - \beta_d = V^\sharp(0, x_0) - \frac{c_1}{d}$, where $c_1 = A(2 + T + T C_f)$. As $V^\sharp(0, x_0) = V^*(0,x_0)$ due to Assumption~\ref{as:finite}, $V_d(0,x_0) \geq V^*(0,x_0) - \frac{c_1}{d} \geq \mathsf{val}\eqref{eq:lpc1} - \frac{c_1}{d} \geq \mathsf{val} \eqref{eq:lprd} - \frac{c_1}{d}$, and we therefore conclude that $V_d$ is $\frac{c_1}{d}$-optimal in \eqref{eq:lpc1} and $\eqref{eq:lprd}$.
\end{proof}
\begin{remark}
     Admittedly, the hypothesis in Th.~\ref{th:convergenceC2hjb} that the value function $V^\sharp$ belongs to $C^2(\mathbb{R}^p | \Sigma)$ is stringent. It is worth noting, however, that there exist systems that satisfy this hypothesis. Here is an example: $\dot{x}(t) = u(t)$, $x(t) \in X = [0,1]^2$, $\lVert u(t) \rVert \leq 1 $ and $K = \{0 \} \times [0,1]$. The value function associated with the horizon $T = \infty$ is $V^\sharp(t,x) = x_1$.
\end{remark}
\section{Illustrative examples}
\label{sec:numerics}
We implemented and tested the proposed methodology on three Minimal Time Control Problems: a generalization of the Zermelo problem, a regatta problem and a generalization of the Brockett integrator. The numerical examples in this section were processed with our \verb+Julia+ package \verb+MinTimeControl.jl+\footnote{This package is available at \url{github.com/aoustry/MinTimeControl.jl}}. In this implementation of Algorithm~\ref{chap:alg:CP}, the master problem \eqref{eq:master} is solved with the simplex algorithm of the commercial solver \verb+Gurobi 10.0+ \cite{gurobi_optimization_llc_gurobi_2021}. At each iteration, we add a maximum of 100 points to the set $\mathcal{Y}^k$. The separation oracle \eqref{eq:oracle} is implemented with a random sampling scheme (with 500,000 samples at each iteration to detect violated constraints), and with the global optimization solver \verb+SCIP 8+ \cite{bestuzheva_scip_2021}, for the certification at the last iterate. This solver is used with a relative tolerance $\delta = 10^{-4}$, and with a time limit of $10,000s$. We also precise that we compute a heuristic trajectory based on the particularities of each problem; this heuristic is not optimal, but provides an upper bound $T$ on the minimum time, and therefore, a relevant time horizon $[0,T]$. The trajectory resulting from the heuristic is used to initialize the set $\mathcal{Y}^0$, in the sense that we enforce the HJB inequality for some points of this trajectory. During the iterations of the algorithm, we obtain functions $V_{\theta^k}(t,x)$ and we simulate the associate feedback trajectory defined by the differential inclusion \eqref{eq:diffinclusion}; if the obtained trajectory reaches the target set, it gives us an upper bound. Those trajectories are also used to enrich the set $\mathcal{Y}^k$. For all the numerical experiments, the regularization parameter is $\mu = 10^{-5}$, and we use the tolerance $\epsilon = 10^{-3}$.

Table~\ref{tab:zermelo}, Table~\ref{tab:regatta} and Table~\ref{tab:brockett} present the numerical results for three different applications. The different columns of these tables are the following:
\begin{itemize}
\item ``$d \in \mathbb{N}$'' is the degree of the polynomial basis used.
\item ``Estimated value of \eqref{eq:lprd}'' stands for the value $V_\theta(0,x_0)$, where $\theta$ is the output of Algorithm~\ref{chap:alg:CP}, using the sampling oracle. This estimated  value of \eqref{eq:lprd} is not an exact lower bound, since this sampling oracle does not provide the guarantee that $\theta$ is indeed feasible in \eqref{eq:sip}.
\item ``Certified lower bound for \eqref{eq:lprd}'' stands for $V_\theta(0,x_0) - \hat{\phi}(\theta)(1+T)$, where $\theta$ is as defined above and $\hat{\phi}(\theta)$ is a guaranteed upper bound on $\phi(\theta)$, the feasibility error of $\theta$ in \eqref{eq:sip}, computed by the global optimization solver \verb+SCIP 8+. As $V_\theta(t,x) + \hat{\phi}(\theta)(t-1-T)  $ is therefore feasible in \eqref{eq:lprd}, the value $V_\theta(0,x_0) - \hat{\phi}(\theta)(1+T)$ is a guaranteed lower bound on $\mathsf{val}$\eqref{eq:lprd}, and, therefore, on $V^*(0,x_0)$.
\item ``Value feedback control \eqref{eq:diffinclusion}'' is the hitting time of the best feasible control generated along the iterations: either with the heuristic control at the first iteration, or the closed-loop controlled trajectory defined by \eqref{eq:diffinclusion} associated with $V = V_{\theta^k}$ at iteration $k$ of Algorithm~\ref{chap:alg:CP}.
\item ``Solution time (in $s$)'' is the total computational time of the heuristic control, of the iterations of Algorithm~\ref{chap:alg:CP} including the sampling oracle, and of the closed-loop trajectory simulation. Therefore, this is the computational time needed to obtain the estimated value of \eqref{eq:lprd} (second column), and the best feasible control (fourth column).
\item ``Iterations number'' is the total number of iterations of Algorithm~\ref{chap:alg:CP}.
\item ``Certification time (in $s$)'' is the computational time of the global optimization solver \verb+SCIP 8+, playing the role of $\delta$-oracle, to compute the aforementioned bound $\hat{\phi}(\theta)$, and deduce the certified lower bound (third column).
\end{itemize}

\subsection{A time-dependent Zermelo problem} We consider a time-dependent nonlinear system with $n = 2$ and $m = 2$, defined by
\begin{align}
\dot{x}_1(t) &= u_1(t) + \frac{1}{2} (1+ t) \: \sin(\pi x_2 (t)) \\
\dot{x}_2(t) &= u_2(t),
\end{align}
with the state constraint set $X = [-1,1] \times [-1,0]$, the control set $U = B(0,1)$. This is the celebrated Zermelo problem, but with a river flow gaining in intensity over time. Fig.~\ref{fig:flow} gives a representation of this flow. The initial condition is $x(0) = (0,-1)$, and the target set is $K = B(0,r)$, for $r = 0.05$. The travel time associated with the heuristic control, consisting in following a straight trajectory, is $1.261$ (see Fig~\ref{fig:zermelocontrol}). Table~\ref{tab:zermelo} presents the numerical results for different values of $d$. We see that the value of the linear semi-infinite program \eqref{eq:lprd} quickly converges as $d$ increases: starting from $d = 6$, the 4 first digits of the estimated value (second column) reach a plateau which corresponds to the value (1.100) of the best feasible trajectory we generate with our feedback control. As regards the certified lower bound, the best value (1.092) is obtained for $d=5$. For greater $d$, we see that increasing $d$ deteriorates the tightness of the best certified bound. This is due to the fact that the separation problem becomes more difficult, with two consequences: (i) the sampling fails to detect unsatisfied constraints, so Algorithm~\ref{chap:alg:CP} stops with a solution that has a real infeasibility $\phi(\theta)$ larger than $\epsilon$ (targeted tolerance), and (ii) the global optimization solver called afterwards does not manage to solve the separation problem to global optimality within the time limit (case $d \in \{7,8\}$), given only a large upper bound $\hat{\phi}(\theta)$ on the true infeasibility $\phi(\theta)$. We notice that as soon as $d \geq 3$, the feedback control defined by \eqref{eq:diffinclusion} (see Sect.~\ref{sec:control}) yields a trajectory that is $13\%$ faster than the heuristic trajectory. In summary, we obtain a certified optimization gap of $0.7\%$ for this minimal time control problem. 
\begin{figure}[ht!]
\centering
\includegraphics[scale=.5]{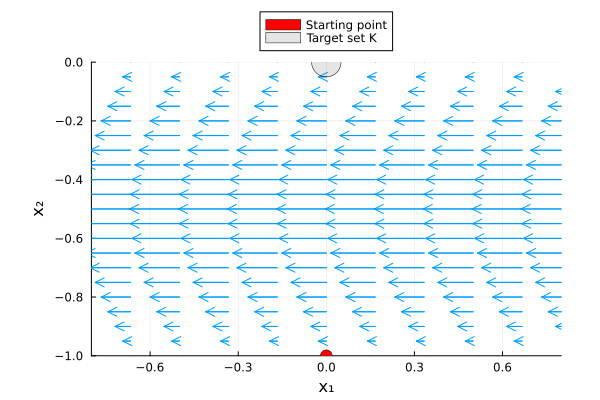}
\caption{Representation of the water flow in the Zermelo problem}
\label{fig:flow}
\end{figure}

\begin{table}[ht!]
\centering
  \begin{tabular}{|c|ccc|ccc|}
    \hline
    \makecell{$d \in \mathbb{N}$} & \makecell{\textbf{Estimated}  \\ \textbf{value of} \eqref{eq:lprd}} & \makecell{\textbf{Certified}  \\ \textbf{LB for} \eqref{eq:lprd}} & \makecell{\textbf{Value feedback} \\ \textbf{control} \eqref{eq:diffinclusion}} & \makecell{\textbf{Solution} \\ \textbf{time (in $s$)}} & \makecell{\textbf{Iterations} \\ \textbf{number}} & \makecell{\textbf{Certification} \\ \textbf{time (in $s$)}} \\\hline    
    2 & 0.952 & 0.945 & 1.261 & 2 & 4 & 1 \\
    3 & 1.064 & 1.044 & 1.101 & 12 & 14 & 12 \\
    4 & 1.096 & 1.084 & 1.100 & 22 & 17 & 1530 \\
    5 & 1.099 & 1.092 & 1.100 & 54 & 22 & 4000 \\
    6 & 1.100 & 1.059 & 1.100 & 60 & 18 & 2330 \\
    7 & 1.100 & 1.051 & 1.100 & 105 & 24 & TL \\
    8 & 1.100 & 0.690 & 1.100 & 215 & 31 & TL \\ \hline
  \end{tabular}
  \caption{Time-dependent Zermelo problem: lower and upper bounds, and computational times for various degrees of the SIP hierarchy \eqref{eq:lprd}}
  \label{tab:zermelo}
\end{table}
\begin{figure}[ht!]
\centering
\includegraphics[scale=.5]{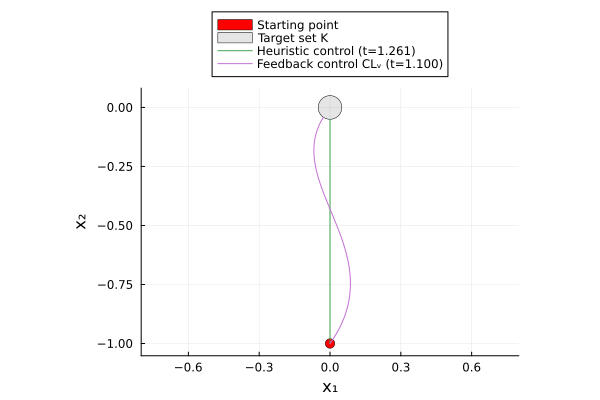}
\caption{Time-dependent Zermelo problem: heuristic control and feedback control ($d=6$)}
\label{fig:zermelocontrol}
\end{figure}

In the special case of this non-polynomial controlled system, a polynomial reformulation exists, at the price of increasing the dimension of the system to $n = 4$:
\begin{align}
\dot{{x}}_1(t) &= u_1(t) + \frac{1}{2} (1+ t) \: \sin(\pi x_2 (t)) \\
\dot{x}_2(t) &= u_2(t) \\
\dot{x}_3(t) &= -\pi x_4(t)u_2(t) \\
\dot{x}_4(t) &=  \pi x_3(t)u_2(t),
\end{align}
with the state constraint set $\hat{X} = [-1,1] \times [-1,0] \times [-1,1] \times [-1,0]  $, the control set $\hat{U} = B(0,1)$, the terminal set $\hat{K} = B(0,r) \times [-1,1] \times [-1,0]$, and the initial condition $x_0 = (0,-1,-1,0)$. The dynamics maintain the equalities $x_3(t) = \cos(\pi x_2(t))$ and $x_4(t) = \sin(\pi x_2(t))$. We are therefore able to compare our approach with the sum-of-squares (SOS) hierarchy, which consists of replacing SIP inequalities in \eqref{eq:lprd} with SOS positivity certificates. For each order $k$ of the hierarchy, i.e., for a maximal degree $d = 2k$ of the polynomial basis, this yields a semi-definite programming problem that we solve with the solver \verb+CSDP+, used with the package \verb+SumOfSquares.jl+. We obtain a polynomial $V(t,x_1,x_2,x_3,x_4)$ that is solution of the corresponding relaxation. Based on this polynomial, we can also generate a feedback controlled trajectory solution of the differential inclusion \eqref{eq:diffinclusion}.
 \begin{table}[ht!]
\centering
  \begin{tabular}{|c|ccc|ccc|}
  \hline
    & \multicolumn{3}{|c|}{\textbf{SIP hierarchy}} &  \multicolumn{3}{|c|}{\textbf{SOS hierarchy}}\\
    \hline
    \makecell{\footnotesize{$d \in \mathbb{N}$}} & \makecell{\textbf{Est./Cert.}  \\ \textbf{LB } } & \makecell{\textbf{Val. feedback} \\ \textbf{control} \eqref{eq:diffinclusion} } & \makecell{\textbf{Sol./Cert.} \\ \textbf{time (in $s$)}} & \makecell{\textbf{Cert.}  \\ \textbf{LB } } & \makecell{\textbf{Val. feedback} \\ \textbf{control} \eqref{eq:diffinclusion} } & \makecell{\textbf{Sol.} \\ \textbf{time (in $s$) }}  \\\hline  
2 & 0.952/0.945 & 1.261& 2/1 & 0.533 & 1.261 & $\leq 1$\\
4 & 1.096/1.084 & 1.101 & 22/1530 & 1.064 & 1.105 & 1 \\
6 & 1.100/1.059 & 1.100 & 60/2330 & 1.099 & 1.100 & 12\\
8 & 1.100/0.690 & 1.100&  215/TL & 1.100 & 1.100 & 190\\
\hline
 \end{tabular}
  \caption{Time-dependent Zermelo problem: comparing the SIP and the SOS approaches}
  \label{tab:zermeloSOS}
\end{table}
Table~\ref{tab:zermeloSOS} compares the performance of the SIP and the SOS approaches. We see that for low-degree polynomials ($d \leq 4$), the semi-infinite hierarchy gives better lower bounds than the SOS hierarchy, although at a higher computational time in the case $d=4$. For $d \in \{6,8 \}$, the lower bound of the SOS hierarchy is tight, while only the estimated lower-bound of the SIP hierarchy is tight: to obtain a certified lower bound, the  SOS hierarchy performs better. For these values of the degree $d$, this optional certification step (calling to the global optimization solver) is costly in the proposed approach. For this first example, where the SOS hierarchy is applicable since a polynomial reformulation of the dynamical system exists, the SIP approach is slower than the SOS hierarchy.  

\subsection{A regatta toy-model} We consider a time-dependent nonlinear (and non-polynomial) system with $n = 2$ and $m = 1$, defined by
\begin{align}
\dot{x}_1(t) &= \mathsf{windspeed}(t) \: \mathsf{polar}\left[ u(t)\right] \: \cos(u(t) + \mathsf{windangle}(t)) \\ 
\dot{x}_2(t) &= \mathsf{windspeed}(t) \: \mathsf{polar}\left[ u(t)\right] \: \sin(u(t)+\mathsf{windangle}(t)),
\end{align}
where $\mathsf{windspeed}(t) = 2 + t$, $\mathsf{windangle}(t) = \frac{\pi}{2}(1-0.4 t)$ and  $\mathsf{polar}[u] = |\sin(\frac{2u}{3})|$. In this model, the control $u(t)$ represents the relative angle between the heading of the boat and the (origin) direction of the wind. The evolution of the wind direction over time is depicted in Fig.~\ref{fig:wind}. The polar curve of this toy model of a sailing boat is represented in Fig~\ref{fig:polar}; this figure clearly shows that this model does not satisfy Assumption~\ref{as:convexF}. Although the absence of duality gap between the control problem and the LP problem \eqref{eq:lpc1} is, therefore, not guaranteed, we see in Table~\ref{tab:regatta} that if this gap exists in this case, it is low (below $1.6 \%$). The state constraint set is $X = [-1,1]^2$, and the control set $U = [-\pi, \pi]$.  The initial condition is $x(0) = (0,-1)$, and the target set is $K = B(0,r)$, for $r = 0.05$. The travel time associated with the heuristic control, consisting in following a straight trajectory, is $1.278$ (see Fig.~\ref{fig:regattacontrol}).
\begin{figure}[!ht]
\centering
     \begin{subfigure}[b]{0.3\textwidth}
         \centering
\includegraphics[width=\textwidth]{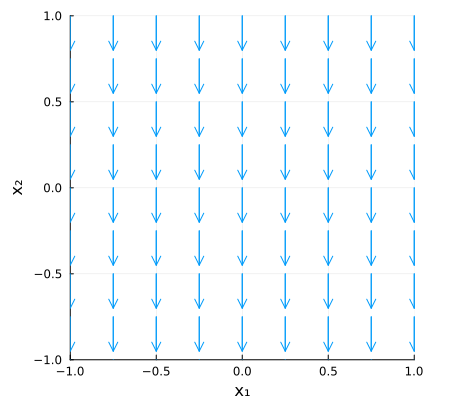}
        \subcaption{$t=0$}
     \end{subfigure}
     \hfill
     \begin{subfigure}[b]{0.3\textwidth}
         \centering
\includegraphics[width=\textwidth]{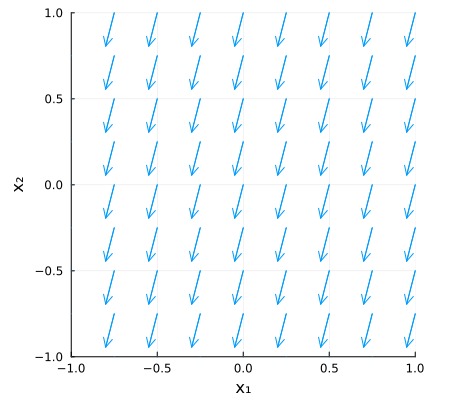}
        \subcaption{$t=0.4$}
     \end{subfigure}
     \hfill
     \begin{subfigure}[b]{0.3\textwidth}
         \centering
\includegraphics[width=\textwidth]{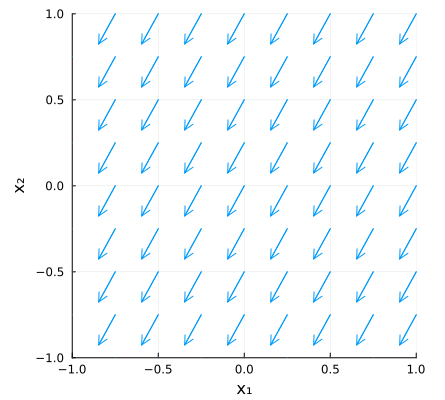}
         \subcaption{$t = 0.8$}
     \end{subfigure}
\caption{Regatta problem: wind direction at different times}
\label{fig:wind}
\end{figure}
\begin{figure}[!ht]
\centering
\includegraphics[scale=.6]{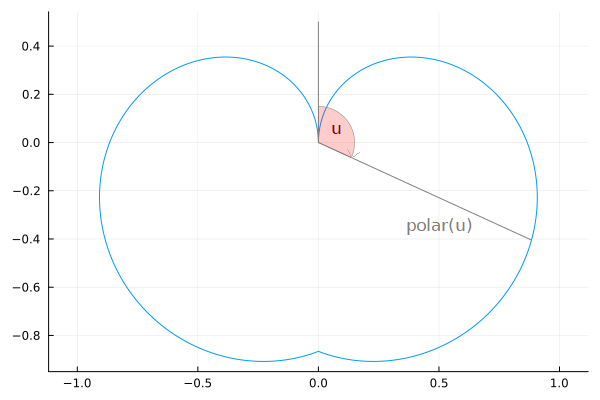}
\caption{Regatta problem: the polar curve of the sailing boat}
\label{fig:polar}
\end{figure}

\begin{table}[ht!]
\centering
  \begin{tabular}{|c|ccc|ccc|}
    \hline
    \makecell{$d \in \mathbb{N}$} & \makecell{\textbf{Estimated}  \\ \textbf{value of} \eqref{eq:lprd}} & \makecell{\textbf{Certified}  \\ \textbf{LB for} \eqref{eq:lprd}} & \makecell{\textbf{Value feedback} \\ \textbf{control} \eqref{eq:diffinclusion}} & \makecell{\textbf{Solution} \\ \textbf{time (in $s$)}} & \makecell{\textbf{Iterations} \\ \textbf{number}} & \makecell{\textbf{Certification} \\ \textbf{time (in $s$)}} \\\hline
   2 & 0.834 & 0.829 & 1.278 & 6 & 6.0 & 2 \\
    3 & 0.896 & 0.880 & 0.912 & 16 & 10.0 & 56 \\
    4 & 0.904 & 0.896 & 0.915 & 31 & 13.0 & 498 \\
    5 & 0.907 & 0.774 & 0.912 & 52 & 16.0 & 1020 \\
    6 & 0.907 & 0.799 & 0.912 & 100 & 22.0 & 1930 \\
    7 & 0.908 & 0.691 & 0.912 & 190 & 29.0 & 7600 \\
    8 & 0.908 & 0.000 & 0.911 & 312 & 33.0 & TL \\\hline
  \end{tabular}
  \caption{Regatta problem: lower and upper bounds, and computational times for various degrees of the SIP hierarchy \eqref{eq:lprd}}
  \label{tab:regatta}
\end{table}

\begin{figure}[ht!]
\centering
\includegraphics[scale=.6]{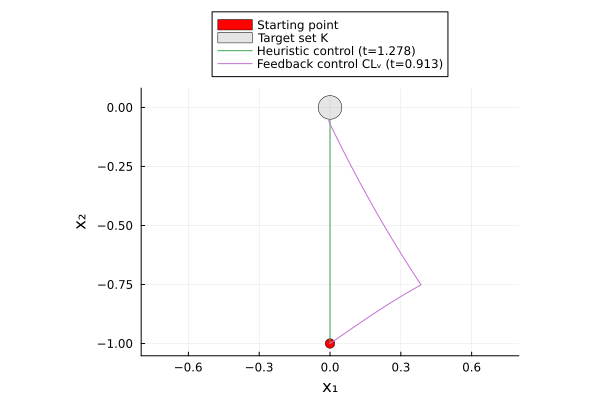}
\caption{Regatta problem: heuristic control and feedback control ($d=6$)}
\label{fig:regattacontrol}
\end{figure}
We see that the highest estimated value of \eqref{eq:lprd}, for $d=7$ and $d=8$, is $0.5\%$ lower than the value ($0.913$) of the best feasible trajectory obtained with the feedback controller for $d=6$. This feedback controller yields a trajectory which is $29\%$ faster than the heuristic trajectory. As regards the certified lower bound, $d = 4$ yields the best result (0.896), at a price of a running time of $498s$ for the exact oracle (\verb+SCIP 8+). For the same reasons as in the previous application, a larger $d$ does not necessarily mean a better certified lower bound obtained within the time limit. In summary, we obtain a certified optimization gap of $1.6\%$ for this minimal time control problem.

\subsection{A generalized Brockett integrator} For $n \in \mathbb{N}^*$ and $m = n-1$ and given a continuous mapping $q\colon \mathbb{R}^n \to \mathbb{R}^m$, we consider the following generalization of the Brockett integrator \cite{loheac_time-optimal_2017}, 
\begin{align}
\dot{x}_i(t) &= u_i(t) \quad \forall i \in \{1, \dots, n-1\} \\
\dot{x}_n(t) &= q(x(t))^\top u(t).
\end{align}
In particular, we study this system for $n = 6$, and $q(x) = \Bigl( 2/(2+x_4),-x_1,-\cos(x_1 x_3),\exp(x_2),x_1 x_2 x_6\Bigr)$. The state constraint set is $X = [-1,1]^5$, and the control set is $U = B(0,1)$. The initial condition is $(x_0) = \frac{1}{2} \mathbf{1}$, and the target set is $K = B(0,r)$, for $r = 0.05$. The travel time associated with the heuristic control is 1.377.

\begin{table}[ht!]
\centering
  \begin{tabular}{|c|ccc|ccc|}
  \hline
    \makecell{$d \in \mathbb{N}$} & \makecell{\textbf{Estimated}  \\ \textbf{value of} \eqref{eq:lprd}} & \makecell{\textbf{Certified}  \\ \textbf{LB for} \eqref{eq:lprd}} & \makecell{\textbf{Value feedback} \\ \textbf{control} \eqref{eq:diffinclusion}} & \makecell{\textbf{Solution} \\ \textbf{time (in $s$)}} & \makecell{\textbf{Iterations} \\ \textbf{number}} & \makecell{\textbf{Certification} \\ \textbf{time (in $s$)}} \\\hline
     2 & 1.071 & 0.763 & 1.071 & 220 & 28 & 73 \\
    3 & 1.072 & 0.000 & 1.071 & 5630 & 133 & TL \\
    4 & 1.072  & 0.000  & 1.070 & 147400  & 319 & TL \\ \hline    
  \end{tabular}
  \caption{Generalized Brockett integrator: lower and upper bounds, and computational times for various degrees of the SIP hierarchy \eqref{eq:lprd}}
  \label{tab:brockett}
\end{table}
Since this system has a larger dimension than the other two examples, we see that the computation times are longer for the same degree $d$. Already for $d=2$, we obtain an estimated value of \eqref{eq:lprd} that is within $0.1\% $ of the value of the feedback control (1.070). This feedback control yields an improvement of $22\% $ over the heuristic trajectory. Note that the estimated values of \eqref{eq:lprd} computed by Algorithm~\ref{chap:alg:CP} with the (inexact) sampling oracle are slightly larger than the value of the best trajectory we computed: thus, these estimates are not valid lower bounds, but only estimates of the value of the minimum time control problem. Regarding the certification of lower bounds, the global optimization solver \verb+SCIP 8+ fails to produce tight upper and lower bounds on $\phi(\theta)$, the infeasibility of the solution $\theta$ returned by Algorithm~\ref{chap:alg:CP}. Therefore, the resulting certified lower bounds are not tight either. In summary, we obtain a certified optimization gap of $29\%$ for this minimal time control problem.

\section{Discussion} We apply the dual approach in minimal time control, that consists in searching for maximal subsolutions of the HJB equation, to generic nonlinear, even non-polynomial, controlled systems. The basis functions used to generate these subsolutions are polynomials, that are subject to semi-infinite constraints. We prove the theoretical convergence of the resulting hierarchy of semi-infinite linear programs, and our numerical tests on three different systems show good convergence properties in practice. These results show that the use of a random sampling oracle allows a good approximation of the value of the control problem. For small systems, it is even possible to obtain tight and certified lower bounds, based on a global optimization solver. Finally, the numerical experiments also show that the computed subsolutions of the HJB equations help to recover near-optimal controls in a closed-loop form. As illustrated in these numerical experiments, the advantage of our approach based on semi-infinite programming, compared to the sum-of-squares approach, is the ability to handle non-polynomial systems. In the numerical example where a polynomial reformulation of the system was possible, the sum-of-squares approach was, however, faster.

A promising avenue for continuing this work is to investigate the use of a other basis of functions to search for an approximate value function, resulting in other semi-infinite programming hierarchies with convergence guarantees. In particular, it would be relevant to use non-differentiable functions in the basis to improve approximation capabilities for non-differentiable value functions. Another avenue of research is to extend the approach and theoretical results to a generic optimal control problem.

\section*{Acknowledgments}
The authors would like to thank Maxime Dupuy, Leo Liberti and Claudia D'Ambrosio for fruitful discussions and advice.

\bibliography{bibliolatex}
\bibliographystyle{plain}

\appendix

\section{Technical lemmata}

\begin{lemma}
We consider a compact set $Z \subset \mathbb{R}^p$, and the family of compact sets $Z_\delta = \{ z \in  \mathbb{R}^p \: \colon \: d(z,Z)  \leq \delta \}$ for any $\delta \geq 0$ and a continuous function $\psi \in C(\mathbb{R}^p)$. Then, the function $\Psi(\delta) = \min_{z \in Z_\delta} \psi(z)$ is continuous at $0$.
\label{lem:valuefunction}
\end{lemma}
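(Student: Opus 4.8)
The plan is to exploit the monotonicity of $\Psi$ together with a compactness argument on the minimizers of $\psi$ over the sets $Z_\delta$. First I would check that $\Psi$ is well defined: for each $\delta \ge 0$ the set $Z_\delta$ is the preimage of $[0,\delta]$ under the continuous map $z \mapsto d(z,Z)$, hence closed, and it is bounded since $Z$ is compact; thus $Z_\delta$ is compact and nonempty (it contains $Z$), so the continuous function $\psi$ attains its minimum on it. Since $\delta \le \delta'$ implies $Z_\delta \subseteq Z_{\delta'}$, minimizing over a larger set can only decrease the value, so $\Psi$ is non-increasing on $[0,\infty)$. In particular, noting that $Z_0 = Z$ because $Z$ is closed, we have $\Psi(\delta) \le \Psi(0)$ for all $\delta \ge 0$, and the right limit $\ell := \lim_{\delta \to 0^+} \Psi(\delta)$ exists with $\ell \le \Psi(0)$.

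It then remains to prove the reverse inequality $\ell \ge \Psi(0)$, which is the only nontrivial point. I would fix a sequence $\delta_k \downarrow 0$ and, for each $k$, a minimizer $z_k \in Z_{\delta_k}$ with $\psi(z_k) = \Psi(\delta_k)$. All the $z_k$ lie in the compact set $Z_{\delta_1}$, so up to extraction I may assume $z_k \to z^\star$. Since $d(z_k, Z) \le \delta_k \to 0$ and $d(\cdot, Z)$ is continuous, $d(z^\star, Z) = 0$, and as $Z$ is closed this gives $z^\star \in Z$. Continuity of $\psi$ then yields $\ell = \lim_k \Psi(\delta_k) = \lim_k \psi(z_k) = \psi(z^\star) \ge \min_{z \in Z} \psi(z) = \Psi(0)$. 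Combined with $\ell \le \Psi(0)$, this shows $\ell = \Psi(0)$, that is, $\Psi$ is continuous at $0$.

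The argument is essentially a lower-semicontinuity statement for the value of a parametrized minimization problem, and its only delicate ingredient is the control of the limit point $z^\star$: the estimate $d(z_k,Z) \le \delta_k$ is exactly what forces $z^\star$ back into $Z$ in the limit, so that the limiting value is bounded below by the minimum over $Z$ itself. I do not expect any genuine obstacle here, since everything reduces to compactness of the $Z_\delta$ together with continuity of $\psi$ and of $d(\cdot,Z)$; the use of monotonicity is merely a convenience guaranteeing that the right limit exists, so that it suffices to evaluate it along a single sequence $\delta_k \downarrow 0$ rather than to argue uniformly in $\delta$.
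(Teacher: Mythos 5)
Your proof is correct and follows essentially the same route as the paper's: monotonicity of $\Psi$ gives existence of the right limit and the inequality $\ell \leq \Psi(0)$, and then a sequence of minimizers $z_k \in Z_{\delta_k}$, compactness, and the estimate $d(z_k,Z)\leq \delta_k$ force the limit point into $Z$, yielding the reverse inequality. The only difference is that you spell out a few routine details (compactness of $Z_\delta$, $Z_0 = Z$) that the paper leaves implicit.
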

\begin{proof}
First of all, we notice that the function $\delta \mapsto \min_{z \in Z_\delta} \psi(z)$ is well-defined, since $\psi$ is continuous and $Z_\delta$ is compact. As $Z_{\delta_1} \subset Z_{\delta_2}$ for any $\delta_1 \leq \delta_2$, the function $\Psi$ is non-increasing, which proves that the following limit exists: 
\begin{align} 
\underset{\delta \rightarrow 0^+}{\lim} \Psi(\delta) = \Psi(0^+) \leq \Psi(0).
\label{eq:limitright}
\end{align} 
We take a positive sequence $(\delta_k) \in \mathbb{R}_{++}^\mathbb{N}$ such that $\delta_k \rightarrow 0$. Hence, $\Psi(\delta_k) \rightarrow \Psi(0^+)$ by  definition of the right-limit. For any $k \in \mathbb{N}$, we define $z_k \in Z_{\delta_k}$ such that $\psi(z_k) = \Psi(\delta_k)$. The sequence $(\delta_k)$ being bounded, we can introduce an upper bound $\Bar{\delta}$. Hence, any element of the sequence $(z_n)$ belongs to the compact set $Z_{\Bar{\delta}}$, and up to the extraction of a subsequence, converges to  a point $z$ being such that $\psi(z) = \Psi(0^+)$ by continuity of $\psi$ and uniqueness of the limit. As $d(z_k,Z)$, the distance between $z_k$ and the compact set $Z$, is bounded above by $\delta_k$ and is non-negative, it converges to $0$. By continuity of the distance, we know that $d(z,Z) = 0$ and, thus, $\Psi(0^+) = \psi(z) \geq  \Psi(0)$. Together with Eq.~\eqref{eq:limitright}, this yields $\Psi(0^+) =   \Psi(0)$.
\end{proof}
\begin{lemma}
Let $Q \in C^1(\mathbb{R}^{p})$ be a continuously differentiable function, with a locally Lipschitz gradient. Let $Z \subset \mathbb{R}^{p}$ be a compact set. Then, there exists a constant $A > 0$, and a sequence of polynomials $(w_d(x))_{d \in \mathbb{N}^*}$ such that for all $d \in \mathbb{N}^*$, $w_d \in \mathbb{R}_d[x_1,\dots,x_p]$ and 
\begin{align}
\sup_{x \in Z} | w_d (x) -  Q (x) | \leq \frac{A}{d}  \\ 
\sup_{x \in Z} \lVert \nabla w_d (x) -  \nabla Q (x) \lVert \leq \frac{A}{d}.
\end{align}
\label{lem:polapprox}
\end{lemma}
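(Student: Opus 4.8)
The statement is a quantitative \emph{simultaneous} polynomial approximation result of Jackson type: since $Q\in C^1(\mathbb{R}^p)$ has a locally Lipschitz gradient, on a neighbourhood of the compact set $Z$ it is $C^{1,1}$, i.e.\ $\nabla Q$ is Lipschitz with some constant $L$, and we must approximate both $Q$ and $\nabla Q$ at the common rate $1/d$. The plan is to realise the approximant as a convolution of a compactly supported modification of $Q$ with a \emph{polynomial} kernel, in the same spirit as the mollifier argument of Th.~\ref{th:smoothvf}, but replacing the smooth bump by an algebraic polynomial of degree $d$.

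First I would localise. Enclose $Z$ in an open cube, pick a smooth cutoff $\chi$ that equals $1$ on a neighbourhood $U\supset Z$ and is compactly supported, and set $\tilde Q=\chi Q$. Then $\tilde Q\in C^1(\mathbb{R}^p)$ has compact support, coincides with $Q$ together with its gradient on $Z$, and $\nabla\tilde Q=\chi\nabla Q+Q\nabla\chi$ is globally Lipschitz (a sum of products of bounded Lipschitz functions on the compact support), say with constant $L'$. The key point is that, because $\tilde Q$ is compactly supported, for \emph{any} polynomial kernel $K_d$ of degree $\le d$ the function
\begin{align*}
w_d(x)=\int_{\mathbb{R}^p}\tilde Q(y)\,K_d(x-y)\,dy=\int_{\mathbb{R}^p}\tilde Q(x-z)\,K_d(z)\,dz
\end{align*}
is a polynomial in $x$ of degree $\le d$, since $K_d(x-y)$ is a degree-$\le d$ polynomial in $x$ and the integral runs over the fixed compact set $\mathrm{supp}\,\tilde Q$.

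It then remains to choose $K_d$ as a concentrated approximate identity on the compact set $C_*$ over which $z=x-y$ ranges for $x\in Z$ and $y\in\mathrm{supp}\,\tilde Q$. Two moment properties suffice: $\int_{C_*}K_d=1$ and a first absolute moment $\int_{C_*}\lVert z\rVert\,\lvert K_d(z)\rvert\,dz\le c/d$. Granting these, differentiating under the integral gives $\nabla w_d(x)=\int\nabla\tilde Q(x-z)K_d(z)\,dz$; using $\int K_d=1$ to cancel the normalisation terms and the two Lipschitz bounds on the increments, one gets, for $x\in Z$,
\begin{align*}
\lvert w_d(x)-Q(x)\rvert\le \lVert\nabla\tilde Q\rVert_\infty\!\int_{C_*}\!\lVert z\rVert\,\lvert K_d(z)\rvert\,dz,\qquad \lVert\nabla w_d(x)-\nabla Q(x)\rVert\le L'\!\int_{C_*}\!\lVert z\rVert\,\lvert K_d(z)\rvert\,dz,
\end{align*}
both therefore $O(1/d)$. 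This yields the two claimed inequalities with a single constant $A$ independent of $d$, the finitely many degrees $d<p$ being absorbed by enlarging $A$.

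The hard part is the existence of a polynomial kernel with first moment $O(1/d)$: the naive Landau-type kernel $(1-\lVert z\rVert^2)^d$ only gives a first moment of order $d^{-1/2}$, which is exactly the gap between the crude Weierstrass rate and the sharp Jackson rate. To close it I would take a tensor product $K_d(z)=\prod_{i=1}^p J_m(z_i)$ of one-dimensional algebraic \emph{Jackson} kernels $J_m$ of degree $m=\lfloor d/p\rfloor$, which are nonnegative, normalised on the relevant interval, and satisfy $\int\lvert t\rvert J_m(t)\,dt\le c/m$; then $K_d$ has total degree $\le d$, integrates to $1$, and its first moment is bounded by $\sum_i\bigl(\int\lvert z_i\rvert J_m(z_i)\,dz_i\bigr)\prod_{j\ne i}\int J_m=O(p^2/d)$. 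Alternatively, and more cheaply for the write-up, one may simply invoke a multivariate simultaneous Jackson theorem from the approximation-theory literature, which for $f\in C^k$ produces polynomials $p_d$ with $\lVert D^\alpha(f-p_d)\rVert_{L^\infty(Z)}\le C\,d^{-(k-\lvert\alpha\rvert)}\,\omega(D^kf;1/d)$; applied with $k=1$ and $\omega(\nabla Q;1/d)\le L/d$ this gives $O(d^{-2})$ for the function and $O(d^{-1})$ for the gradient, hence the lemma.
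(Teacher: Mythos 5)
Your proposal is correct, and your primary argument takes a genuinely different route from the paper's; in fact, your one-sentence fallback at the end \emph{is} the paper's actual proof. The paper localizes exactly as you do (its cutoff is the mollified indicator $\omega \ast \mathbf{1}_{B(0,R+1)}$ rather than a generic $\chi$) and then directly invokes the multivariate simultaneous approximation theorem of Bagby, Bos and Levenberg \cite[Th.~1]{bagby_multivariate_2002}: for the compactly supported $\tilde Q$ there are polynomials $w_d$ of degree at most $d$ with $\sup_Z|w_d-\tilde Q|\le \frac{C}{d}\kappa(\frac{1}{d})$ and $\sup_Z|\partial_i(w_d-\tilde Q)|\le C\kappa(\frac{1}{d})$, where $\kappa$ is the modulus of continuity of the partial derivatives of $\tilde Q$; the locally Lipschitz gradient gives $\kappa(\delta)\le L\delta$, so both bounds are $O(1/d)$, exactly as in your closing sentence. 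Your main route instead builds $w_d$ by hand, convolving $\tilde Q$ with a polynomial tensor-product kernel: the observation that convolving a compactly supported function with a genuine degree-$d$ polynomial kernel yields a degree-$d$ polynomial is correct, the reduction of both estimates to the two moment conditions $\int K_d=1$ and $\int\lVert z\rVert\,\lvert K_d(z)\rvert\,dz=O(1/d)$ is correct, and your diagnosis that the Landau kernel only achieves $O(d^{-1/2})$ is exactly the right obstruction to flag. The one fragile point is that you invoke ``algebraic Jackson kernels'' concentrated at an interior point as off-the-shelf objects; the classical Jackson kernel is trigonometric, and its algebraic, interior-point analogue must be constructed, e.g.\ as the normalization of the polynomial $W_m$ of degree $2(m-1)$ defined by $W_m(\cos\theta)=U_{m-1}(\sin\theta)^4$ (with $U_{m-1}$ the Chebyshev polynomial of the second kind), which is nonnegative on $[-1,1]$, and has first absolute moment $O(1/m)$ about $0$; a referee would ask you to supply this construction or a precise reference. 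In summary: your approach is more self-contained and constructive, making explicit why the Jackson rate $1/d$ is attainable by degree-$d$ polynomials, at the cost of a kernel construction that the paper avoids entirely by resting on the cited simultaneous-approximation theorem, which makes the paper's proof shorter but citation-dependent.
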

We underline that the constant $A$ implicitly depends on $p$, $Q$ and $Z$, but not on the polynomial $w_d(x)$, nor on its degree $d$.
\begin{proof} We introduce a constant $R > 0$ such that $Z \subset B(0,R)$, and the function $\tilde{\mol} = \mol \ast \mathbf{1}_{B(0,R+1)}$, where $\mol$ is the mollifier introduced in the proof of Th.~\ref{th:smoothvf}. We notice that $\tilde{\mol} \in C^\infty(\mathbb{R}^p)$ is supported on $\tilde{Z} = B(0, R +2)$ and constant equal to $1$ over  $B(0,R)$. We define $\tilde{Q}(x) = Q(x) \tilde{\mol}(x)$, and we notice that (i) $\tilde{Q}$ is supported on the compact set $\tilde{Z}$, which contains $Z$, (ii) for all $x \in Z$, $\tilde{Q}(x) = Q(x)$ and $\nabla \tilde{Q}(x) = \nabla Q(x)$. Applying \cite[Th.~1]{bagby_multivariate_2002} to the function $\tilde{Q}$, that has a compact support, we know that there exists a constant $C$ such that for any $d \geq 1$, there exists a polynomial $w_d(x)$ of degree at most $d$ such that 
\begin{align}
\label{eq:bounding1}
\sup_{x \in Z} |w_d(x) - \tilde{Q}(x) | \leq \frac{C}{d} \kappa(\frac{1}{d}) \leq C \kappa(\frac{1}{d}), \\
\sup_{x \in Z} |\partial_i(w_d - \tilde{Q})(x) | \leq C \kappa(\frac{1}{d})
\label{eq:bounding2}
\end{align}
where $\kappa(\delta) = \underset{1 \leq i \leq p }{\sup} \:  \underset{\begin{subarray}{c} (x,y) \in  \mathbb{R}^p \times \mathbb{R}^p \\ |x-y|\leq \delta \end{subarray}}{\sup} |\partial_i \tilde{Q}(x) - \partial_i \tilde{Q}(y)|$. We define $\tilde{Z} = \{ x \in \mathbb{R}^p \colon d(x,Z) \leq 1 \}$. Since $\partial \tilde{Q}$ is uniformly null outside $\tilde{Z}$, and assuming that $\delta \leq 1$, we notice that 
\begin{align*}
\kappa(\delta) = \underset{1 \leq i \leq p }{\sup} \:  \underset{\begin{subarray}{c} x,y \in \tilde{Z} \times \mathbb{R}^p \\ |x-y|\leq \delta \end{subarray}}{\sup} |\partial_i \tilde{Q}(x) - \partial_i \tilde{Q}(y)| = \underset{1 \leq i \leq p }{\sup} \:  \underset{\begin{subarray}{c} x,y \in \tilde{Z} \times \tilde{Z} \\ |x-y|\leq \delta \end{subarray}}{\sup} |\partial_i \tilde{Q}(x) - \partial_i \tilde{Q}(y)|.
\end{align*}
We note that $\partial_i \tilde{Q}(x) =\tilde{\mol}(x)  \partial_i {Q} (x) + Q(x)  \partial_i {\tilde{\mol}} (x)$, and therefore, $|\partial_i \tilde{Q}(x) - \partial_i \tilde{Q}(y)| = | \tilde{\mol}(x)  (\partial_i {Q} (x) - \partial_i {Q} (y)) + \partial_i {Q} (y)(\tilde{\mol}(x) - \tilde{\mol}(y)) + Q(x)  (\partial_i {\tilde{\mol}} (x) - \partial_i {\tilde{\mol}} (y)) + \partial_i {\tilde{\mol}} (y)(Q(x) - Q(y))|$. We use, then, the triangle inequality and the facts that (i) $\tilde{\mol}$ is $C^\infty$, therefore bounded, Lipschitz continuous, and with a Lipschitz-continuous gradient over $\tilde{Z}$ and (ii) $Q$ is continuously differentiable, therefore bounded, and  Lipschitz continuous over $\tilde{Z}$; by assumption it has a Lipschitz continuous gradient over the compact set $\tilde{Z}$. We deduce that $\partial_i \tilde{Q}$ is Lipschitz continuous over $\tilde{Z}$: there exists  $L_i>0$ such that  $\underset{\begin{subarray}{c} x,y \in \tilde{Z} \times \tilde{Z} \\ |x-y|\leq \delta \end{subarray}}{\sup} |\partial_i \tilde{Q}(x) - \partial_i \tilde{Q}(y)| \leq L_i \delta$, for all $\delta \in [0, 1]$. Defining $L = \max_i L_i$, we deduce  $\kappa(\delta) \leq L \delta$. Then Eq.~\eqref{eq:bounding1} reads $\sup_{x \in Z} |w_d(x) - \tilde{Q}(x) | \leq \frac{CL}{d}$, and  Eq.~\eqref{eq:bounding2} reads $\sup_{x \in Z} |\partial_i (w_d - \tilde{Q})(x) | \leq \frac{CL}{d}$ for all $i \in \{1, \dots, p \}$. We also deduce that $\sup_{x \in Z} \lVert \nabla w_d (x) -  \nabla \tilde{Q} (x) \lVert \leq \frac{C L p}{d}$. Defining $A = C L p$, and noticing that 
for all $x \in Z$, $\tilde{Q}(x) = Q(x)$ and $\nabla \tilde{Q}(x) = \nabla Q(x)$, one obtains the claimed statement.
\end{proof}

\begin{lemma}
Under  Assumption~\ref{as:convexF} and Assumption~\ref{as:finite}, we consider an admissible trajectory $(x(\cdot),u(\cdot))$ over $[0, t_1]$ of the minimal time control problem \eqref{eq:system}-\eqref{eq:inf} starting from $(0,x_0)$. Then, for almost all $t \in [0, t_1]$, $f(t,x(t),u(t)) \in T_X(x(t))$.
\label{lem:tangentcone}
\end{lemma}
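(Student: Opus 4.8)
The plan is to show that at almost every time $t$ the velocity $\dot{x}(t) = f(t,x(t),u(t))$ lies in $T_X(x(t))$ by exhibiting the defining sequences of the contingent cone through a forward difference quotient. First I would observe that, although the state constraint $(x(t),u(t)) \in X \times U$ in \eqref{eq:constraints} only holds a.e., the trajectory $x(\cdot)$ is in fact continuous: since $f$ is Lipschitz continuous and $u(\cdot)$ is bounded measurable, the map $s \mapsto f(s,x(s),u(s))$ is bounded and measurable, so $x(\cdot)$ is absolutely continuous (indeed Lipschitz) on $[0,t_1]$. As $X$ is closed and $x(t) \in X$ for a.e. $t$, continuity of $x(\cdot)$ upgrades this to $x(t) \in X$ for \emph{every} $t \in [0,t_1]$. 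In particular the contingent cone $T_X(x(t))$ is well defined at each time, since its definition presupposes $x(t) \in X$.

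Next, recall that, being absolutely continuous, $x(\cdot)$ is differentiable at almost every $t \in [0,t_1]$, and at such points $\dot{x}(t) = f(t,x(t),u(t))$ by the Lebesgue differentiation theorem applied to the integral representation of the trajectory. I would fix such a point of differentiability $t < t_1$ (discarding the single endpoint $t = t_1$ is harmless, being of measure zero), take any sequence $t_k \to 0^+$ with $t + t_k \leq t_1$ for all $k$, and set
\begin{align*}
d_k = \frac{x(t+t_k) - x(t)}{t_k}.
\end{align*}
By definition of the derivative, $d_k \to \dot{x}(t) = f(t,x(t),u(t))$. Moreover $x(t) + t_k d_k = x(t + t_k) \in X$ by the first step. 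The sequences $(t_k)$ and $(d_k)$ therefore satisfy exactly the conditions in the definition of the contingent cone, whence $f(t,x(t),u(t)) = \dot{x}(t) \in T_X(x(t))$. Since this holds at every point of differentiability $t < t_1$, it holds for almost all $t \in [0,t_1]$, as claimed.

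I do not anticipate a genuine obstacle here: the argument is an elementary unwinding of the definition of the contingent cone combined with a.e. differentiability of the absolutely continuous state trajectory, and it uses neither Assumption~\ref{as:convexF} nor Assumption~\ref{as:finite} beyond the standing admissibility of the trajectory. The only point requiring care is the first step---promoting the a.e. state constraint to an everywhere statement via continuity of $x(\cdot)$ and closedness of $X$---since both the membership $x(t) \in X$ needed to define $T_X(x(t))$ and the inclusion of the forward points $x(t+t_k) \in X$ needed for the construction rely on this upgrade.
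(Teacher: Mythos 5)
Your proof is correct, but it takes a genuinely different route from the paper. The paper reduces to a time-invariant system $\tilde f(y,u) = (1, f(y,u))$ with constant control map $\tilde U(y)=U$, checks that this is a Marchaud control system (which uses Assumption~\ref{as:convexF} for convexity of the velocity sets), and then invokes the viability-theory result \cite[Th.~6.1.4]{aubin_jean-pierre_viability_1991} to conclude that the velocity lies in $T_{\mathcal{C}}(y(t))$ for a.e.\ $t$, finishing with the inclusion $T_{\mathcal{C}}(y(t)) \subset \mathbb{R}\times T_X(x(t))$. You instead prove the statement from scratch: upgrade the a.e.\ constraint $x(t)\in X$ to an everywhere statement via continuity of the Lipschitz trajectory and closedness of $X$, then at each point of differentiability exhibit the forward difference quotients $d_k = (x(t+t_k)-x(t))/t_k$ as the defining sequences of the contingent cone. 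This is exactly the elementary ``easy direction'' of the viability theorem, and it is arguably preferable here: it is self-contained, and it shows that the conclusion needs neither Assumption~\ref{as:convexF} nor Assumption~\ref{as:finite} (the paper's route does use convexity of $f(t,x,U)$ through the Marchaud hypothesis, even though that hypothesis is only genuinely needed for the hard, existence direction of viability theory). What the paper's approach buys is uniformity: the same Marchaud/regulation-map machinery is reused in the proof of Theorem~\ref{th:existencetraj}, where the hard direction of the viability theorem is unavoidable.
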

\begin{proof}
We reduce to a time-invariant controlled system: we define, for any $y = (t,x) \in \mathbb{R}^{n+1}$ and $u \in \mathbb{R}^m$, $\tilde{f}(y,u) = \begin{pmatrix}
1 \\ f(y,u)
\end{pmatrix}$, and the constant set-valued map $\tilde{U}(y) = U$. The control system $(\tilde{f},\tilde{U})$ is a Marchaud control system \cite[Def.~6.1.3]{aubin_jean-pierre_viability_1991}, since: (i) the graph of $\tilde{U}$ is closed (ii) $\tilde{f}$ is continuous (iii) the velocity subsets $\{ \tilde{f}(y,u) \colon u \in \tilde{U}(y) \}$ are convex due to Assumption~\ref{as:convexF}, and (iv) the function $f$ has a linear growth since it is Lipschitz continuous, and the set-valued map are bounded, thus also has a linear growth. We define the set $\mathcal{C} = \mathbb{R}_+ \times X$ and notice that the control $u(\cdot)$ regulates a trajectory $y(t) = \begin{pmatrix}
t \\ x(t)
\end{pmatrix}$ that remains in $\mathcal{C}$, therefore according to \cite[Th.~6.1.4]{aubin_jean-pierre_viability_1991}, for all most all $t \in [0, t_1]$, $u(t) \in \{ u \in \tilde{U}(y(t)) \colon \tilde{f}(y(t),u(t)) \in T_\mathcal{C}(y(t)) \}$. We notice that $ T_\mathcal{C}(y(t)) \subset \mathbb{R} \times T_X(x(t))$, implying that $f(y(t),u(t)) \in T_X(x(t))$.
\end{proof}

\begin{lemma}
For any locally Lipschitz continuous function $F \colon \mathbb{R}^p \to \mathbb{R}$, and for any Lipschitz continuous curve $y \colon [0, T] \to \mathbb{R}^p$, the function $t \mapsto F(y(t))$ is differentiable a.e. and satisfies
\begin{align}
\min_{g \in \partial^c F(y(t))} g^\top \dot{y}(t) \leq \frac{d}{dt}(F(y(t))) \leq \max_{g \in \partial^c F(y(t))} g^\top \dot{y}(t),  \label{eq:chainrulineq}
\end{align} 
for almost all $t \in [0, T]$.
\label{lem:clarkedifferential}
\end{lemma}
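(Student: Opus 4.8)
The plan is to prove the two inequalities in Eq.~\eqref{eq:chainrulineq} separately, using Clarke's generalized directional derivative as the bridge between the scalar derivative of $F \circ y$ and the subdifferential $\partial^c F$. First I would record the a.e.\ differentiability claims. Since $y$ is Lipschitz, its image $y([0,T])$ is compact, so the locally Lipschitz function $F$ is genuinely Lipschitz on a neighborhood of $y([0,T])$; consequently $t \mapsto F(y(t))$ is a Lipschitz function of one real variable, hence absolutely continuous and differentiable a.e., and $y$ is itself differentiable a.e. I would therefore fix the full-measure set of times $t$ at which both $\dot{y}(t)$ and $\frac{d}{dt}(F(y(t)))$ exist, and work at such a $t$, writing $v = \dot{y}(t)$.

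The key tool is Clarke's generalized directional derivative
\begin{align*}
F^\circ(x;v) = \limsup_{\substack{x' \to x \\ \lambda \downarrow 0}} \frac{F(x'+\lambda v) - F(x')}{\lambda},
\end{align*}
together with the support-function identity $F^\circ(x;v) = \max_{g \in \partial^c F(x)} g^\top v$ from \cite{clarke_generalized_1975}. For the upper bound, I would use $y(t+\lambda) = y(t) + \lambda v + o(\lambda)$ and the local Lipschitz constant $L$ of $F$ near $y(t)$ to estimate $|F(y(t+\lambda)) - F(y(t)+\lambda v)| \leq L\,|o(\lambda)| = o(\lambda)$. Dividing by $\lambda$ and letting $\lambda \downarrow 0$ shows that the forward difference quotient of $F \circ y$ has the same limit as $\frac{1}{\lambda}(F(y(t)+\lambda v) - F(y(t)))$; since the latter limit is dominated (as a $\limsup$) by $F^\circ(y(t);v)$, obtained by freezing the base point to $x' = y(t)$, I obtain $\frac{d}{dt}(F(y(t))) \leq F^\circ(y(t);v) = \max_{g} g^\top v$.

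For the lower bound I would run the identical computation on the backward difference quotient, using $y(t-\mu) = y(t) - \mu v + o(\mu)$; equivalently, this amounts to applying the upper bound to the direction $-v$ and using $-F^\circ(y(t);-v) = -\max_g g^\top(-v) = \min_g g^\top v$. Combining the two inequalities yields Eq.~\eqref{eq:chainrulineq}.

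The step I expect to require the most care is precisely the passage from the difference quotient of $F \circ y$ to that of $F$. One cannot invoke the naive chain rule $\frac{d}{dt}F(y(t)) = \nabla F(y(t))^\top \dot{y}(t)$, because the curve may spend a set of times of positive measure inside the (Lebesgue-null in $\mathbb{R}^p$) set where $F$ fails to be differentiable, so $\nabla F(y(t))$ need not even exist. The resolution is that $F^\circ(x;v)$ takes the $\limsup$ over base points $x'$ near $x$, not only over the single point $x$; this is exactly what allows the $o(\lambda)$ perturbation of the curve to be absorbed and makes the argument insensitive to whether $F$ is differentiable at $y(t)$.
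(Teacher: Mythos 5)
Your proof is correct and follows essentially the same route as the paper's: Rademacher's theorem gives the a.e.\ differentiability, the local Lipschitz property of $F$ absorbs the $o(\lambda)$ linearization error of the curve, the resulting difference quotient is dominated by Clarke's directional derivative via the support-function identity $F^\circ(y(t);v) = \max_{g \in \partial^c F(y(t))} g^\top v$, and the lower bound follows by a symmetry argument. The only cosmetic differences are that you freeze the base point at $y(t)$ (using that the one-sided Dini quotient is dominated by the Clarke limsup) where the paper keeps the moving base point $y(t)+r(h)$, and you obtain the lower bound via the direction $-v$ where the paper applies the upper bound to $-F$ and uses $\partial^c(-F) = -\partial^c F$; these variants are equivalent.
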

The particular functions $F$ for which these three quantities are equal are called path-differentiable in \cite{bolte_conservative_2021}.

\begin{proof}
First, we notice that the functions $t \mapsto y(t)$ and $t \mapsto F(y(t))$ are Lipschitz continuous, therefore differentiable a.e. on $[0, T]$ due to the Rademacher theorem \cite{gariepy_measure_2015}. Hence, for almost all $t \in [0, T]$, both $y(t)$ and $F(y(t))$ are differentiable at $t$. We consider such a $t$, and we show that \eqref{eq:chainrulineq} holds for this particular $t$, which we prove the Lemma. Since $y$ is differentiable at $t$, $r(h) = y(t+h) -  y(t) - h \dot{y}(t)$ is in $o_{h\to 0}(h)$ . 

 Since $s \mapsto F(y(s))$ is differentiable at $t$, the following holds
\begin{align}
\frac{d}{dt}(F(y(t))) &= \underset{h \rightarrow 0, h >0}{\lim} \frac{F(y(t+h)) - F(y(t))}{h} \\
		&=  \underset{h \rightarrow 0, h >0}{\lim} \frac{F(y(t) + h \dot{y}(t) + r(h)) - F(y(t))}{h} \label{eq:limitderiv}
\end{align}
Since $r(h) = o_{h\to 0}(h)$ and $F$ is locally Lipschitz, we know that $\underset{h \rightarrow 0, h >0}{\lim} \frac{F(y(t)) - F(y(t) + r(h)) }{h} = 0$. Summing this with Eq.~\eqref{eq:limitderiv}, we deduce that

\begin{align}
\frac{d}{dt}(F(y(t))) &= \underset{h \rightarrow 0, h >0}{\lim} \frac{F(y(t) + r(h) + h \dot{y}(t) ) - F(y(t) + r(h))}{h} \\
& \leq \underset{\begin{subarray}{c} y' \rightarrow y(t) \\ h \rightarrow 0, h> 0 \end{subarray}}{\limsup} \frac{F(y' + h \dot{y}(t)) - F(y')}{h} = F^\circ(y(t), \dot{y}(t)),
\end{align}
where  $F^\circ(y;v) = \underset{\begin{subarray}{c} y' \rightarrow y \\ h \rightarrow 0, h> 0 \end{subarray}}{\limsup} \frac{F(y' + h v) - F(y')}{h}$ is the $F^\circ(y;h)$ Clarke's directional derivative at $y \in \mathbb{R}^p$ in the direction $v \in \mathbb{R}^p$. The inequality follows from the fact that $y(t) +r(h) \rightarrow y(t)$. By property of the Clarke subdifferential \cite{clarke_generalized_1975}, we also know that $F^\circ(y;v) = \max_{g \in \partial^c F(y)} g^\top v$. Hence, in particular,
\begin{align}
\frac{d}{dt}(F(y(t)))  \leq \max_{g \in \partial^c F(y(t))} g^\top \dot{y}(t). \label{eq:ineqclarkemax}
\end{align}
The reasoning that proved Eq.~\eqref{eq:ineqclarkemax} is also applicable to $-F$, that is also locally Lipschitz, and such that $s \mapsto (-F)(s)$ is differentiable at $t$. Therefore,
\begin{align}
\frac{d}{dt}(-F(y(t)))  \leq \max_{g \in \partial^c (-F)(y(t))} g^\top \dot{y}(t).
\end{align}
As $\partial^c (-F)(y(t)) = - \partial^c F(y(t))$ by property of the Clarke subdifferential, we deduce that 
\begin{align}
-\frac{d}{dt}(F(y(t)))  \leq \max_{g \in \partial^c F(y(t))} - g^\top \dot{y}(t) = - \min_{g \in \partial^c F(y(t))} g^\top \dot{y}(t),
\end{align}
and therefore $\frac{d}{dt}(F(y(t)))  \geq \min_{g \in \partial^c F(y(t))} g^\top \dot{y}(t)$.
\end{proof}

\end{document}